\documentclass[11pt]{amsart}
\usepackage{ amsmath, amsthm, amsfonts, hyperref, graphicx, ifpdf}
\usepackage{mathrsfs,epsfig}
\usepackage[dvipsnames,usenames]{color}
\usepackage{mathrsfs}
\usepackage{amssymb}
\usepackage{faktor}
\usepackage{bm}
\usepackage{enumerate}
\usepackage[utf8]{inputenc}
\usepackage{mathtools}
\usepackage{esint}
\usepackage{tikz-cd} 
\usepackage{nicematrix}

\hypersetup{
   unicode=false,          
   pdftoolbar=true,        
   pdfmenubar=true,        
   pdffitwindow=false,     
   pdfstartview={},    
   pdftitle={},    
   pdfauthor={},     
   pdfsubject={},   
   pdfcreator={},   
   pdfproducer={}, 
   pdfkeywords={}, 
   pdfnewwindow=true,      
   colorlinks=true,       
   linkcolor=blue,          
   citecolor=blue,        
   filecolor=blue,      
   urlcolor=blue          
}

\newtheorem{theorem}{Theorem}[section]
\newtheorem{cor}[theorem]{Corollary}
\newtheorem{lem}[theorem]{Lemma}
\newtheorem{pro}[theorem]{Proposition}
\newtheorem{remark}[theorem]{Remark}

\newtheorem{Conj}[theorem]{Conjecture}
\theoremstyle{definition}

\DeclareMathOperator{\PSL}{PSL(2,\C)}

\newcommand{\af}{almost Fuchsian}
\newcommand{\afm}{almost Fuchsian manifold}

\newcommand{\cmc}{constant mean curvature}

\newcommand{\eus}{Euclidean space}
\newcommand{\ee}{evolution equation}
\newcommand{\es}{evolving surface}

\newcommand{\Fm}{Fuchsian manifold}

\newcommand{\htm}{hyperbolic three-manifold}
\newcommand{\hym}{hyperbolic metric}

\newcommand{\kg}{Kleinian group}

\newcommand{\mc}{mean curvature}
\newcommand{\mcf}{mean curvature flow}
\newcommand{\mmcf}{modified mean curvature flow}
\newcommand{\maxp}{maximum principle}
\newcommand{\ms}{minimal surface}

\newcommand{\nv}{normal vector}

\newcommand{\pc}{principal curvature}
\newcommand{\qf}{quasi-Fuchsian}

\newcommand{\sff}{second fundamental form}

\newcommand{\tg}{totally geodesic}

\def\<{\langle}
\def\>{\rangle}
\def\({\left(}
\def\){\right)}

\newcommand{\tm}{three-manifold}

\newcommand{\is}{incompressible surface}

\def\p{\partial}

\newcommand{\ksg}{Kleinian surface group}

\newcommand{\qfm}{quasi-Fuchsian manifold}

\newcommand{\be}{\begin{equation}}
\newcommand{\ene}{\end{equation}}
\newcommand{\br}{\begin{remark}}
\newcommand{\er}{\end{remark}}
\newcommand{\bl}{\begin{lem}}
\newcommand{\el}{\end{lem}}
\newcommand{\bcor}{\begin{cor}}
\newcommand{\ecor}{\end{cor}}
\newcommand{\bpro}{\begin{pro}}
\newcommand{\epro}{\end{pro}}
\newcommand{\ben}{\begin{enumerate}}
\newcommand{\een}{\end{enumerate}}
\newcommand{\bp}{\begin{proof}}
\newcommand{\ep}{\end{proof}}
\newcommand{\bpo}{\begin{pro}}
\newcommand{\epo}{\end{pro}}
\newcommand{\beq}{\begin{equation*}}
\newcommand{\eeq}{\end{equation*}}
\newcommand{\bear}{\begin{eqnarray}}
\newcommand{\eear}{\end{eqnarray}}
\newcommand{\beqar}{\begin{eqnarray*}}
\newcommand{\eeqar}{\end{eqnarray*}}
\newcommand{\bt}{\begin{theorem}}
\newcommand{\et}{\end{theorem}}

\newcommand{\vnu}{\vec{\nu}}

\newcommand{\HH}{\mathbb{H}^3}

\newcommand{\R}{\mathbb{R}}
\newcommand{\C}{\mathbb{C}}
\newcommand{\AF}{\mathcal{AF}}
\newcommand{\QF}{\mathcal{QF}}

\renewcommand{\H}{\mathbb{H}}

\newcommand{\ppl}[2]{\frac{\partial{#1}}{\partial{#2}}}

\numberwithin{equation}{section}

\allowdisplaybreaks

\numberwithin{equation}{section}

\allowdisplaybreaks


\def\XXint#1#2#3{{\setbox0=\hbox{$#1{#2#3}{\int}$}
    \vcenter{\hbox{$#2#3$}}\kern-.5\wd0}}

\makeatletter
\def\@citestyle{\m@th\upshape\mdseries}
\def\citeform#1{{\bfseries#1}}
\def\@cite#1#2{{%
  \@citestyle[\citeform{#1}\if@tempswa, #2\fi]}}
\@ifundefined{cite }{%
  \expandafter\let\csname cite \endcsname\cite
  \edef\cite{\@nx\protect\@xp\@nx\csname cite \endcsname}%
}{}
\makeatother

\begin{document}

\title[MMCF and CMC foliation in almost Fuchsian manifolds]{Modified mean curvature flow and CMC foliation conjecture in almost Fuchsian manifolds}

\author{Zheng Huang}
\address[Z. ~H.]{Department of Mathematics, The City University of New York, Staten Island, NY 10314, USA}
\address{The Graduate Center, The City University of New York, 365 Fifth Ave., New York, NY 10016, USA}
\email{zheng.huang@csi.cuny.edu}

\author{Longzhi Lin}
\address[L.~L.]{Mathematics Department\\University of California, Santa Cruz\\1156 High Street\\
Santa Cruz, CA 95064\\USA}
\email{lzlin@ucsc.edu}

\author{Zhou Zhang}
\address[Z. ~Z.]{The School of Mathematics and Statistics, The University of Sydney, NSW 2006, Australia}
\email{zhangou@maths.usyd.edu.au}

\date{}
\subjclass[2020]{Primary 53C42, 58J35, 57K32}


\begin{abstract}
There has been a conjecture, often attributed to Thurston, which asserts that every {\af} manifold is foliated by closed incompressible constant mean curvature (CMC) surfaces. In this paper, for a certain class of {\afm}s, we prove the long-time existence and convergence of the {\mmcf}
$$
\ppl{F}{t}=-(H-c)\vnu,
$$
which was first introduced by Xiao and the second named author in \cite{LX12}. As an application, we confirm Thurston's CMC foliation conjecture for such a subclass of almost Fuchsian manifolds.
\end{abstract}

\maketitle

\tableofcontents
\section {Introduction}

\subsection{Motivating Questions}
	
A {\kg} $\Gamma$ is a discrete subgroup of $\PSL$, the orientation preserving isometry group of $\HH$. Any complete {\htm} can be written as some 
$\HH /  \Gamma$. When $\Gamma$ is the fundamental group of a closed oriented surface $S$ with genus $g\geq 2$, the resulting {\htm} is of the type $S \times \R$. There is a 
well-developed deformation theory for complete {\htm}s of the type $S \times \R$ (see for instance 
\cite{Thu86, BB04, Min10, BCM12} and many others). In this class of {\htm}s, regular ones (or when the limit set of $\Gamma$, denoted as $\Lambda_\Gamma$, is a Jordan curve on 
the sphere at the infinity) are {\it \qf}. In fact, the {\qf} groups form the closure of all deformation spaces of {\ksg}s.  When $\Lambda_{\Gamma}$ is actually a circle, we call $\Gamma$ a {\it Fuchsian group} and the corresponding $M$ a {\it{Fuchsian manifold}}.

It is of fundamental importance to study incompressible surfaces in {\tm} theory in order to understand the decompositions. Thurston observed that a closed surface of {\pc}s less than $1$ in magnitude (a.k.a. small curvatures) is incompressible in a {\htm} and this was proved in \cite {Lei06}. In the setting of {\htm}s which are 
diffeomorphic to $\Sigma \times \R$ (where $\Sigma$ a closed oriented surface of genus at least two), Uhlenbeck (\cite{Uhl83}) initiated a vast program using this concept to 
study incompressible {\ms}s in {\qfm}s with applications to the study of a parametrization of the space of surface group representations in $\PSL$. Closed surfaces of small curvatures, especially when they are also minimal, play an important role (see for instance \cite{Rub05, KS07, HL21, CMN22, HLS23}).

We denote the moduli space of {\qf} manifolds that is called {\qf} space by $\QF(\Sigma)$, and the {\af} space, consisting of elements of {\qf} that admit a closed {\it {\ms}} homeomorphic to $\Sigma$ of {\pc}s between $-1$ and $1$, by $\AF (\Sigma)$. It is a standard fact in geometric analysis (\cite{SY79, SU82}) that any {\qfm} admits at least one closed incompressible {\ms}, while any {\afm} admits exactly one closed {\ms} (\cite{Uhl83}). Since then these points of view set off an extensive work towards finding special incompressible surfaces in {\qfm}s, and more particularly {\afm}s. These special surfaces are often {\ms}s, or {\cmc} (i.e. {\bf CMC} in short) surfaces, or surfaces of constant Gaussian curvature. For instance, Mazzeo and Pacard proved the existence and uniqueness of CMC foliations for the ends of any {\qfm} (\cite{MP11}). Such a foliation can't be global for some {\qfm}s (\cite{HW13}). It has been a long-standing conjecture of Thurston which asserts that:
 \begin{Conj}[Thurston]\label{Thurston-Conj}
 Any {\afm} is globally, monotonically, uniquely foliated by closed oriented incompressible surfaces of {\cmc}. 
\end{Conj}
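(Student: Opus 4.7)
The plan is to prove the conjecture for the subclass of \afm s announced in the abstract by constructing the CMC foliation via the \mmcf\ $\partial_t F=-(H-c)\vnu$ from \cite{LX12}: for each admissible target \mc\ $c$, one starts from a suitable \ins, proves long-time existence and smooth convergence to a closed CMC surface $\Sigma_c$, and then assembles $\{\Sigma_c\}$ into a monotone foliation. Adapted coordinates come from the unique closed incompressible \ms\ $\Sigma_0\subset M$ with \pc s in $(-1,1)$: its \ef\ (normal exponential) covers $M$ and realizes every surface isotopic and transverse to $\Sigma_0$ as the graph of a \hf\ $u:\Sigma_0\to\R$, reducing MMCF to a quasilinear parabolic scalar equation for $u$.

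Starting from $u\equiv 0$, short-time existence is standard; the core of the argument is to extend the flow for all time by establishing three a priori estimates:
\begin{enumerate}
\item A $C^0$ bound on $u$, using the equidistant surfaces from $\Sigma_0$ (whose \mc s range through $(-2,2)$) as barriers via the \maxp;
\item A gradient bound, equivalently a lower bound on the tilt $\langle\vnu,\partial_r\rangle$, preserving graphicality, derived from the \ee\ for the tilt together with the negative ambient curvature;
\item A bound on the \sff\ forcing the \pc s to stay strictly below $1$ in magnitude, obtained from a \maxp\ argument on the \ee\ for $|A|^2$.
\end{enumerate}
Standard parabolic bootstrap then yields higher regularity and long-time existence, while a monotonicity of $u(\cdot,t)$ coming from the sign of $H-c$ along the flow gives smooth convergence as $t\to\infty$ to a closed CMC surface $\Sigma_c$ of \mc\ $c$.

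To assemble $\{\Sigma_c\}$ into a foliation I would compare pairs of CMC surfaces: for $c_1>c_2$ the difference of \hf s of $\Sigma_{c_1}$ and $\Sigma_{c_2}$ has a definite sign by the \maxp\ applied to the prescribed-\mc\ equation, yielding monotonicity and uniqueness of $\Sigma_c$ simultaneously. Exhaustion of $M$ as $c\to\pm 2$ follows by sandwiching $\Sigma_c$ between equidistant surfaces from $\Sigma_0$ whose \mc s approach $\pm 2$.

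The hard part is estimate (iii). Under MMCF in the hyperbolic ambient, $|A|^2$ satisfies a reaction-diffusion inequality in which a bad quartic term $+2|A|^4$ competes with favorable contributions from $\Ricbar=-2\gbar$; forcing the \pc s to remain strictly inside $(-1,1)$ at every intermediate time --- not only at $t=0$ and $t=\infty$ --- requires a quantitative control on this competition, and this is precisely the hypothesis that will characterize the ``certain class'' of \afm s in the abstract. All remaining steps --- the graphical reduction, long-time existence once curvature is controlled, and the foliation property --- are either standard parabolic theory or direct applications of the \maxp.
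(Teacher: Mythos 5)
Your skeleton (run the MMCF for each admissible $c$, obtain a closed CMC limit $\Sigma_c$, then assemble the family into a monotone foliation) is the same as the paper's, but you have inverted where the real difficulty lies, and that inversion is a genuine gap. The crux of the paper is your step (ii): a uniform lower bound on the angle $\Theta=\langle\vec{\nu},\vec{n}\rangle$ does \emph{not} follow from "the evolution equation for the tilt together with the negative ambient curvature". The evolution equation for $\Theta$ has the form $(\partial_t-\Delta)\Theta=(|A|^2-2)\Theta+\cdots$ plus several Lie-derivative terms of the ambient metric; the negative curvature contributes $-2\Theta$, which works \emph{against} a lower bound, and the remaining terms carry no sign in general. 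The paper controls them only by computing every term explicitly in Uhlenbeck's metric, organizing part of the right-hand side into a perfect square involving $H$, and showing that a positive term of size $(1-\lambda^2(0))^2\cosh(4r_0)\Theta^2(1-\Theta^2)$ dominates the bad terms precisely when $\|A_\Sigma\|_{C^1}\le\epsilon$ and when the height estimates confine the flow to a controlled band of equidistant surfaces; this is also why the initial surface is a carefully chosen equidistant surface $\Sigma(r)$ with $H\ge c$ (so that the height is monotone and the bad terms decay in time), not the minimal surface $u\equiv 0$. Conversely, your "hard part" (iii) is neither needed nor likely to work as stated: a maximum-principle argument on $|A|^2$ aimed at preserving $|A|<1$ along the flow faces a reaction term of the type $|A|^2(|A|^2-2)$ with no favorable sign at the threshold, and the paper never requires the evolving surfaces to have small curvature at intermediate times --- once $\Theta$ is bounded below, the surfaces are uniform geodesic graphs and the curvature bounds follow from standard quasilinear parabolic regularity.

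The second gap is the exhaustion as $c\to\pm 2$. The flow argument only produces CMC surfaces for $c\in[-2+\tfrac{\epsilon_1}{2},\,2-\tfrac{\epsilon_1}{2}]$: the decay rates (such as $\tfrac{2-c}{4}$ in the height estimate and in the ODE for $\min_{S(t)}\Theta^2$) and the admissible smallness $\epsilon$ degenerate as $c\to 2$, so "sandwiching between equidistant surfaces of $\Sigma_0$" does not foliate the ends --- those equidistant surfaces are not CMC and your estimates are not uniform in $c$. The paper closes this by importing the CMC foliations of the two ends from Mazzeo--Pacard and Choudhury--Mazzoli--Seppi (the universal $\epsilon_1$ comes from the latter), and it proves monotonicity, uniqueness and continuity of the compact part via the geometric maximum principle combined with the equidistant foliation emanating from each limit surface $\Sigma_c$ (which requires first showing $\Sigma_c$ has principal curvatures of modulus less than $1$, obtained by shrinking $\epsilon_1$), yielding two-sided bounds on $\mathrm{dist}(\Sigma_c,\Sigma_{c'})$ in terms of $c'-c$. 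Your graph-comparison argument gives the ordering of the $\Sigma_c$, but without this quantitative continuity and without a treatment of the ends (or uniform-in-$c$ estimates up to $\pm2$) you cannot conclude that the surfaces sweep out all of $M$.
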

 Note that when the three-manifold is Fuchsian, namely, it is of the type $\Sigma \times \R$ and admits a unique closed totally geodesic surface $\Sigma$, then the equidistant surfaces from $\Sigma$ form a global monotone foliation of umbilic (particularly, CMC) surfaces. This conjecture remains a challenging question in the field. Recently the global {CMC} foliation on any Fuchsian manifold was extended to a nearby {\afm} in $\AF(\Sigma)$ (\cite{CMS23}) using the implicit function theorem (and the smallness of the perturbation depends on the particular Fuchsian manifold).

 In recent decades, geometric curvature flows were introduced to geometric analysis and they become a powerful tool to find such special surfaces. Thurston suggested an approach by geometric flows similar to the work of Huisken-Yau on finding foliations of {\cmc} outside a compact region in an asymptotic Euclidean space (\cite{HY96}). However in a manifold of negative curvature, its rich and expanding geometric structures and the contracting nature of some geometric 
 flows do not always agree. In this paper, we use the \textit{\mmcf} ({\bf MMCF} in short)
 \begin{equation}
\ppl{F}{t}=-(H-c)\vnu,
 \end{equation}
  to construct closed imcompressible surfaces of constant mean curvature in a certain class of {\af} manifolds, see Theorem \ref{main}. This {MMCF} is the negative gradient flow of certain functional involving the area of the surface and the volume between the surface and some fixed reference surface. It was first introduced by Xiao and the second named author in \cite{LX12}, where they used it to construct CMC hypersurfaces in the hyperbolic space $\H^{n+1}$ with prescribed asymptotic Dirichlet infinity (c.f. asymptotic Plateau problem in hyperbolic space). This paper builds up on our previous work (\cite{HLZ20}) on the {\mcf} ({\bf MCF} in short) in Fuchsian manifolds and \cite{HZZ19} in a class of warped product manifolds. As an application, we give an affirmative answer to Thurston's CMC foliation Conjecture \ref{Thurston-Conj} for a sub-class of {\afm}. More precisely we prove
  \begin{theorem}\label{main-thm-2}
Let $M^3$ be an {\afm} and $\Sigma =\Sigma(0)$ be the unique closed {\ms} with principal curvatures $\pm \lambda(x,0) \in (-1,1)$ in $M^3$. There exists a universal constant $\epsilon>0$ depending only on $M^3$ such that if the second fundamental form of $\Sigma$ satisfies 
$$
\|A_{\Sigma}\|_{C^1(\Sigma)} \leq \epsilon \,,
$$
then $M^3$ admits a unique global monotone smooth foliation by closed incompressible surfaces of constant mean curvature ranging from $-2$ to $2$.
\end{theorem}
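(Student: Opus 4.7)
For each target mean curvature $c\in(-2,2)$, my plan is to run the MMCF with prescribed constant $c$, starting from the unique minimal surface $\Sigma$ (which also serves as the reference surface in the defining functional), prove long-time existence and smooth convergence to a closed surface $\Sigma_c$ of constant mean curvature $c$, and then assemble $\{\Sigma_c\}_{c\in(-2,2)}$ into the claimed foliation. Writing the evolving surface as a normal graph over $\Sigma$ in Fermi coordinates, $\Sigma(t)=\{\exp_{x}(u(x,t)\vnu(x)):x\in\Sigma\}$, converts MMCF into a quasilinear parabolic equation for the height function $u$. The hypothesis $\|A_\Sigma\|_{C^1(\Sigma)}\le\epsilon$ means that on a large tubular neighborhood of $\Sigma$ the ambient geometry is a small perturbation of the Fuchsian model, so the estimates developed in \cite{HLZ20} and \cite{HZZ19} can be adapted to the present setting.

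The main technical step is a set of uniform a priori estimates yielding long-time existence. A $C^0$/height bound follows from using equidistant surfaces of $\Sigma$ as sub- and super-solution barriers: in the Fuchsian case their mean curvature equals $2\tanh(r)\to\pm 2$ as $r\to\pm\infty$, and the $\epsilon$-perturbation preserves this monotonicity, so for each $c\in(-2,2)$ the flow is trapped between two such barriers. A gradient estimate on $u$, controlling the angle between $\Sigma(t)$ and the level sets of the signed distance function to $\Sigma$, follows from a maximum-principle argument applied to the gradient function $v=\langle\vnu,\partial_r\rangle^{-1}$. The $C^2$ estimate on the second fundamental form is then extracted from the evolution equation for $|A|^2$, where the negative ambient sectional curvature together with the smallness of $\|A_\Sigma\|_{C^1}$ dominates the unfavorable zero-order terms; higher regularity follows from parabolic Schauder theory.

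Since MMCF is the negative $L^{2}$-gradient flow of the functional $\Acal(\Sigma(t))-c\cdot V(\Sigma(t),\Sigma)$, the uniform estimates combined with monotonicity yield smooth subsequential convergence, which is upgraded to full exponential convergence to a CMC-$c$ surface $\Sigma_c$ by a direct argument on the decay of $H-c$ along the flow. To assemble the foliation, smooth dependence $c\mapsto\Sigma_c$ follows from the implicit function theorem at each $\Sigma_c$, whose Jacobi operator is invertible thanks to the strictly negative ambient Ricci; monotonicity and pairwise disjointness of the $\Sigma_c$ come from the strong maximum principle for CMC surfaces in a negatively curved ambient manifold; and the fact that the family $\{\Sigma_c\}$ sweeps out all of $M^3$ as $c\to\pm 2$ follows from comparison with the equidistant barriers above, which exhaust the two ends of $M^3$. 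The main obstacle is the uniform-in-time bound on $|A|^{2}$: in a negatively curved ambient manifold the evolution equation for $|A|^{2}$ contains zero-order curvature terms with the unfavorable sign, and the smallness hypothesis $\|A_\Sigma\|_{C^1(\Sigma)}\le\epsilon$ is exactly what is needed to absorb them into the parabolic diffusion and close the bootstrap, thereby preventing singularity formation as $t\to\infty$.
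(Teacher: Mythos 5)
There is a genuine gap, and it sits exactly where your plan is most ambitious: you propose to produce $\Sigma_c$ for \emph{every} $c\in(-2,2)$ by running the MMCF with a single universal $\epsilon$, and to obtain the global foliation (including the two ends) purely from the flow plus equidistant barriers. The paper cannot and does not do this. Its flow argument only covers $c\in[-2+\frac{\epsilon_1}{2},\,2-\frac{\epsilon_1}{2}]$, where $\epsilon_1$ is the constant from the Mazzeo--Pacard/Choudhury--Mazzoli--Seppi end-foliation results, and the universal $\epsilon$ in Theorem \ref{main-thm-2} is calibrated to that $\epsilon_1$ (see \eqref{unifconst}); the ends, i.e. mean curvature in $(-2,-2+\epsilon_1)\cup(2-\epsilon_1,2)$, are imported from \cite{MP11} and \cite{CMS23}, and the paper explicitly flags the behavior of the flow for $c\in(2-\frac{\epsilon_1}{2},2)$ as open. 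The obstruction is quantitative: as $c\to\pm2$ the limiting CMC surfaces escape to infinity, the height window \eqref{r-range-1} and the admissible $\epsilon$ shrink, and the graphicality (angle) estimate \eqref{theta-bd} degenerates — the required lower bound $\Theta\geq\frac{1}{1+\epsilon_1/8}$ tends to $1$ and the good term in the evolution of $\Theta$ no longer dominates. So "trapped between two barriers for each $c$" does not give estimates uniform in $c$, and without that your exhaustion of $M^3$ as $c\to\pm2$ is unsupported.

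Two further points where your outline diverges from what actually closes the argument. First, you start the flow at the minimal surface $\Sigma$ and treat the gradient bound on $v=\Theta^{-1}$ as a routine maximum-principle step, locating the main difficulty in the $|A|^2$ estimate; in the paper the situation is reversed: once a positive lower bound for $\Theta$ is in hand, all higher-order bounds are standard parabolic theory, and the entire technical core (Sections \ref{angle-estimate-1}--\ref{proof-of-main}) is the angle estimate, which only closes because the initial surface is chosen to be an equidistant surface $\Sigma(r)$ with $\Theta\equiv1$, mean curvature at least $c$ (so Lemma \ref{mean-convex-1} makes the maximal height non-increasing), and height confined to the window \eqref{r-range-1}; starting from $\Sigma$ itself ($H=0<c$, $b_0=0$) breaks both the mean-convexity monotonicity and the height lower bound that feed into the $\Theta$ estimate. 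Second, in assembling the foliation you invoke the implicit function theorem with invertibility of the Jacobi operator "thanks to the strictly negative ambient Ricci"; the operator is $\Delta+|A|^2-2$, so negativity of Ricci alone is insufficient — one needs $|A|^2<2$ on $\Sigma_c$, i.e. the small-curvature property of the limit surfaces, which the paper proves (via closeness to the Fuchsian umbilic slices) and then exploits through the equidistant foliation of $\Sigma_c$ and the geometric maximum principle (Theorem \ref{g-m-p}) to get monotonicity, uniqueness, and continuity in $c$, rather than through an IFT argument.
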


As an immediate corollary of Theorem \ref{main-thm-2} we have
\bcor \label{cor-4}
Let $\Sigma$ be a closed oriented surface of genus at least two. There exists a universal neighborhood $U$ of the Fuchsian locus in {\af} space $\AF(\Sigma)$ such that every {\af} manifold in $U$ admits a unique global monotone smooth foliation by closed incompressible surfaces of constant mean curvature ranging from $-2$ to $2$.
\ecor
 
We remark that recently \cite{GLP21} utilized the mean convex MCF with surgery on {\qfm}s and proved a number of very interesting results: they showed any {\qfm} is foliated by mean convex surfaces (surfaces with {\mc} either all nonpositive or all nonnegative). Note that when $M$ is 
 {\af}, then Uhlenbeck (\cite{Uhl83}) has already shown that $M$ is foliated by (from one end to another) closed surfaces of negative {\mc} to the unique {\ms} to surfaces of positive {\mc}. There is a well-developed theory on the formation of singularities for mean convex MCFs in {\tm}s (see for example \cite{HS99, HS09, BH18, HK17, HK19} and others). However at this point how possible singularities can form for the MMCF is largely unknown. Our approach has a distinct feature: we do not rely on any surgeries to remove possible singularities may occur as the flow develops, instead, we look for appropriate initial geometrical conditions to entirely avoid any singularity. Therefore we will have to explore the special geometry of the ambient {\afm}. Our proof is of geometric flow in nature and is independent of previous results in the literature for the existence of CMC surfaces in hyperbolic manifolds.

\subsection{The setting}

We always assume the genus of any closed {\is} of any {\tm} is at least two so that it carries its own {\hym}. As we mentioned above, 
{\Fm}s are probably the most elementary complete, non-simply connected {\htm}s. A {\Fm} is obtained as a warped product and it contains a unique closed incompressible totally geodesic surface. An {\afm} $M^3$ is a class of quasi-Fuchsian manifold which admits a closed incompressible {\ms} $\Sigma = \Sigma(0)$ of {\pc}s less than one in magnitude. Let $\Sigma$ be this {\ms} (which is also the unique closed {\ms} in $M^3$ (\cite{Uhl83})). Then Uhlenbeck (\cite{Uhl83}) showed that the level sets 
$\{\Sigma(r)\}_{r\in \R}$ of $\Sigma$ form a foliation of $M^3$, where $r$ is the (signed) hyperbolic distance from $\Sigma$ (see Lemma \ref{U-metric}).
\begin{remark}\label{principle-curv-equidistant}
Assume the central minimal surface has principal curvatures $-1<-\lambda(p,0)\leqslant \lambda(p,0)<1$ at $p\in\Sigma$. Then the principal curvatures for the equidistant surface $\Sigma(r)$ at $(p, r)$ are given as (see e.g. \cite[Equation (3.10)]{Eps84}, \cite[Equation (2.2)]{GHW10}):
\begin{equation}
\tanh(\tanh^{-1}(-\lambda(p,0))+r) = \frac{\tanh(r)-\lambda(p,0)}{1-\lambda(p,0)\tanh(r)}  \quad \in (-1,1)
\end{equation}
and
\begin{equation}
\tanh(\tanh^{-1}(\lambda(p,0))+r) = \frac{\tanh(r)+\lambda(p,0)}{1+\lambda(p,0)\tanh(r)} \quad \in (-1,1)\,.
\end{equation}
\end{remark}
In the case when $M^3$ is Fuchsian, then the surface $\Sigma(r)$ is umbilic with constant {\pc} $\tanh(r)$. We are interested in the general case, so let us assume $M^3$ is \textit{not} Fuchsian, in this case, $\Sigma(r)$ is not {CMC}.

Our main analytical tool is the {\mmcf} (MMCF), which has the following form:
\be\label{mmcf}	
   \left\{
   \begin{aligned}
      \ppl{}{t}\,F(x,t)&=-(H(x,t)- c)\vnu(x,t)\ ,\\
      F(\cdot,0)&=F_{0}\ ,
   \end{aligned}
   \right.
\ene
where $H(x,t)$ is the {\mc} of the evolving surface $S(t)$ at $(x,t)$, with $S(0) =S_0$ and our convention of the {\mc} is the sum of the {\pc}s. We assume $c$ is a constant such that $c\in (-2,2)$. By symmetry we can only consider the case 
\begin{equation}\label{c-range}
    c\in [0,2).
\end{equation} Obviously when $c = 0$, this equation is the {\mcf} equation.

Recall that a smooth closed surface $S_0$ in $M^3$ is a (geodesic) graph over $\Sigma$ if there is a 
constant $c_0 > 0$ such that $\Theta = \langle{\vec{n}},{\vnu_0}\rangle \geq c_0$, where $\vec{n} = \ppl{}{r}$ is the unit {\nv} on $\Sigma$ and $\vnu_0$ is the unit normal vector on $S_{0}$. Note that $\Theta \in (0,1]$ if $S_0$ is a graph, and $\Theta \equiv 1$ if and only if $S_0$ is \emph{equidistant} from $\Sigma$ (sometimes called \emph{parallel} to $\Sigma$, or a \emph{level surface} to $\Sigma$). Again a quick note in the Fuchsian case which we showed in the earlier work \cite{HLZ20} that the {\es}s of the MCF of initial surface with 
$\Theta \equiv 1$ stay umbilic and converge to the unique {\tg} surface $\Sigma$. In fact, it is easy to check that the MMCF starting from an equidistant surface stays umbilic and converges to the unique surface of constant mean curvature $c$ in a Fuchsian manifold. Note that any {\afm} which is not Fuchsian, the unique {\ms} and its equidistant surfaces are not umbilic.

\subsection{Main Result}
As aforementioned, as a special case of the results proved by Mazzeo and Pacard \cite{MP11}, the ends of any {\qfm} are monotonically foliated by {\cmc} surfaces. This was reproved by Quinn \cite{Qui20} using the Epstein map construction. In \cite[Theorem 3.1]{CMS23}, Choudhury, Mazzoli and Seppi adapted Quinn's arguments and showed that for any {\qfm} $N \in \QF(\Sigma)$ there exists a neighborhood $U$ of $N$ in $\QF(\Sigma)$ and a constant $\epsilon_1 = \epsilon_1(N, U)>0$ such that the ends of every {\qfm} in $U$ are smoothly monotonically foliated by {\cmc} surfaces with mean curvature ranging in $(-2, -2+\epsilon_1) \cup (2-\epsilon_1, 2)$. In particular, for any almost Fuchsian manifold $M^3 \cong \Sigma \times \R$ with
\begin{equation}\label{dist-small}
\text{dist}_{\QF(\Sigma)}(M^3, \overline M)<\epsilon_2 \ll 1
\end{equation}
for some Fuchsian manifold $\overline M$ and a universal constant $\epsilon_2>0$, there exists a monotone foliation by CMC surfaces with mean curvature ranging in $(-2, -2+\epsilon_1) \cup (2-\epsilon_1, 2)$ in the two ends.
We remark that 
\begin{equation}\label{eps_1}
    \epsilon_1 = \epsilon_1 (M^3, \epsilon_2)>0
\end{equation}
here as in \cite[Theorem 3.1]{CMS23} does \textit{not} depend on the particular Fuchsian manifold $\overline M$. Then by a perturbation argument they showed the existence of a monotonic foliation by CMC surfaces with mean curvature ranging in $[-2+\epsilon_1, 2-\epsilon_1]$ in the compact region of this {\afm} $M^3$ provided
\begin{equation}\label{dist-small-1}
\text{dist}_{\QF(\Sigma)}(M^3, \overline M)<\epsilon_3(M^3, \overline{M}) \ll 1
\end{equation}
with $\epsilon_2 >0$ now depends also on the particular Fuchsian manifolds $\overline{M}$. This then yields a global monotonic CMC foliation for such an {\afm}.

Now let $M^3$ be an {\afm} and $\Sigma =\Sigma(0)$ be the unique closed {\ms} with principal curvatures $$\pm \lambda(x,0) \in (-1,1)$$ in $M^3$ and 
$$\lambda_{max} = \max_{x\in \Sigma} \lambda(x,0) \in [0,1).$$ In this paper, we prove that there exists a {\textit{universal}} constant $\epsilon>0$ depending only on $\epsilon_1>0$ in \eqref{eps_1} such that if the second fundamental form of $\Sigma$ satisfies 
\begin{equation}\label{small-A-1}
\|A_{\Sigma}\|_{C^1(\Sigma)}\leq \epsilon\,,
\end{equation}
then the {\mmcf} \eqref{mmcf} with $c\in [-2+\frac{\epsilon_1}{2}, 2-\frac{\epsilon_1}{2}]$ starting from an appropriate equidistant surface $\Sigma(r)$ will exist for all time, stays as geodesic graphs over $\Sigma$ and converges smoothly to a closed incompressible surface of constant mean curvature $c$. In particular, there exists a unique closed incompressible {\cmc} surface of any $c\in [-2+\frac{\epsilon_1}{2}, 2-\frac{\epsilon_1}{2}]$. We note that the condition \eqref{small-A-1} is considerably weaker than \eqref{dist-small}. More precisely, we prove
\bt\label{main}
Let $M^3$ be an {\afm} and $\Sigma =\Sigma(0)$ be the unique closed {\ms} with {\pc}s $\pm \lambda(x,0) \in (-1,1)$ in $M^3$. There exists a universal constant 
\be\label{unifconst}
    \epsilon \in \left[ 0, 1- \frac{1}{\left(1+\frac{\epsilon_1}{16}\right)^2}\right) \cap \left[0, \frac{\sqrt{\epsilon_1}}{12}\right]\cap \left[0, 7\times 10^{-6} \right],
\ene
where $\epsilon_1>0$ is from \eqref{eps_1}, such that if the {\sff} of $\Sigma$ satisfies 
$$
\|A_{\Sigma}\|_{C^1(\Sigma)} \leq \epsilon \,,
$$
then the MMCF \eqref{mmcf} with $c\in \left[0, 2-\frac{\epsilon_1}{2}\right]$ starting from any equidistant surface $S_0 = \Sigma(r) \subset M^3$ with 
\begin{equation}\label{r-range-1}
r \in \left[\tanh^{-1} \left(\frac{c}{2 - (2-\frac{c^2}{2})\lambda^2_{max}}\right), \frac12\cosh^{-1}\left(\frac18\ln \left(\frac{\epsilon_1}{\epsilon(16+\epsilon_1)}\right)\right)\right]
\end{equation}
satisfies:
\begin{enumerate}
\item
the flow exists for all time;
\item
the {\es}s $\{S(t)\}_{t \ge 0}$ stay smooth and remain as geodesic graphs over $\Sigma$ for all time;
\item
the {\es}s $\{S(t)\}_{t \ge 0}$ converge smoothly to a closed surface $S_{\infty}^c$ diffeomorphic to $\Sigma$ and is of {\cmc} $c$.
\end{enumerate}
\et
\begin{remark}
In fact, the initial surface $S_0$ in Theorem \ref{main} could be replaced any geodesic graph
with hyperbolic signed distance to $\Sigma$ within the range of \eqref{r-range-1} and
$$
\min_{S_0} \Theta \geq \sqrt{1-\epsilon}.
$$
Exploring the behavior of the modified mean curvature flow when $c$ falls within the range $(2 - \frac{\epsilon_1}{2}, 2)$ is an intriguing problem.
\end{remark}
As an immediate corollary of Theorem \ref{main} we have
\bcor\label{corollary-1}
Let $M^3$ be an {\afm} and $\Sigma =\Sigma(0)$ be the unique closed {\ms} with principal curvatures $\pm \lambda(x,0) \in (-1,1)$ in $M^3$. There exists a universal constant $\epsilon>0$ depending only on $M^3$ such that if the second fundamental form of $\Sigma$ satisfies 
$$
\|A_{\Sigma}\|_{C^1(\Sigma)} \leq \epsilon \,,
$$
then for any $c \in (-2,2)$, there exists a (unique) closed incompressible surface of {\cmc} $c$.
\ecor
\begin{proof}
The uniqueness follows from Theorem \ref{main-thm-2} above and the geometric maximal principle Theorem \ref{g-m-p}.
\end{proof}
As an application of the existence of closed incompressible surfaces of {\cmc}, we confirm Thurston's CMC foliation conjecture \ref{Thurston-Conj} for such a class of almost Fuchsian manifolds, see Theorem \ref{main-thm-2} and Corollary \ref{cor-4}.

\begin{remark}
It's important to note the primary distinction between our proof and the result presented in Corollary \ref{cor-4} when compared to the foliation result in \cite{CMS23}. Our proof is non-perturbative and applies to any {\af} manifold within a \textit{universal} neighborhood of the Fuchsian locus. This universality is independent of the specific Fuchsian manifold in consideration while the size of neighborhood in the main theorem of \cite{CMS23} depends on each point on the Fuchsian locus.  
\end{remark}

\subsection{Organization of the paper} 	
We recall some preliminary results in Section \S\ref{prelim}, including the explicit nature of the {\af} metric, the geometric {\maxp} and the evolution equation for the angle function $\Theta$ (see Lemma \ref{evoluation-eq-theta}). In Section \S\ref{angle-estimate-1} we meticulously compute each term in the evolution equation of the angle function $\Theta$. A key technical, yet essential, height estimate for the MMCF is derived in Section \S\ref{height-est} using the evolution equation for the height function. Our main results, as presented in Theorem \ref{main} and Theorem \ref{main-thm-2}, are proven in Section \S\ref{proof-of-main}.
\subsection{Acknowledgments} The first named author would like to thank Biao Wang and Andrea Seppi for many helpful discussions on the problem. The research of Z. Huang is partially supported by a PSC-CUNY grant. The research of L. Lin is partially supported by a UCSC grant. The research of Z. Zhang is partially supported by ARC Future Fellowship FT150100341.

\section{Preliminaries} \label{prelim} 
In this section, we fix our notations, and introduce some preliminary facts that will be used in this paper.

  

\subsection{Almost Fuchsian manifolds} 
 
Let $\Sigma$ be the unique {\ms} in the {\afm} $(M^3 = \Sigma \times \mathbb{R}, \,g_{\alpha\beta})$. The curvature tensor of $M^3$ is given by
\beq
   \widetilde R_{\alpha\beta\gamma\delta}=-
   (g_{\alpha\gamma}g_{\beta\delta}-
    g_{\alpha\delta}g_{\beta\gamma})\,.
\eeq
For the central {\ms} $\Sigma=\Sigma(0)$ and \textit{any} orthonormal frame $\{\vec{n}, \widetilde e_1, \widetilde e_2\}$ at $\forall (x,0)\in \Sigma$, we denote the second fundamental form of $\Sigma\subset M$ at $(x,0)$ by 
\begin{equation}\label{2ndff-A0}
    A_{\Sigma}= A_{\Sigma}(x, 0)= [h_{ij}]_{2\times{}2} = \begin{bmatrix}
a & b \\
b & -a
\end{bmatrix}
\end{equation}
and set $$\lambda^2(0) = a^2+b^2<1.$$
Here we use the convention such that $h_{ij}$ is given by, for $1\leq{}i,j\leq{}2$,
\begin{equation*}
   h_{ij}=\langle{\overline\nabla_{\widetilde e_{i}} \vec{n}}, {\widetilde e_{j}}\rangle
         =-\langle \vec{n}, \overline\nabla_{\widetilde e_i}\widetilde e_{j}\rangle\ ,
\end{equation*}
where $\overline\nabla$ is the Levi-Civita connection of $(M^3,g_{\alpha\beta})$. Note that $$\pm\lambda(0) = \pm\lambda(x,0) \in (-1,1)$$ are the principal curvatures of $\Sigma$ at $(x,0)$. We can construct a normal coordinate system in a collar neighborhood of $\Sigma$. More precisely, suppose $x=(x^{1},x^{2})$ is a coordinate system on $\Sigma$, and choose $\varepsilon>0$ to be sufficiently small, then the (local) 
diffeomorphism
\begin{align*}
   \Sigma \times(-\varepsilon,\varepsilon)&\to{}M\\
   (x^{1},x^{2},r)&\mapsto
   \exp_{x}(r\vec{n})
\end{align*}
induces a coordinate patch in $M^3$. Let $\Sigma(r)$ be the family of level sets with respect to $\Sigma$, i.e.
\beq
   \Sigma(r)=\{\exp_{x}(r\vec{n})\ |\ x\in{}\Sigma\}\ ,
   \quad{}r\in(-\varepsilon,\varepsilon)\ .
\eeq
The induced metric on $\Sigma(r)$ is denoted by $g(x,r) = g_{ij}(x,r)$, and the {\sff} is denoted by
$A(x,r)=[h_{ij}(x,r)]_{1\leq{}i,j\leq{}2}$. The {\mc} on $\Sigma(r)$ is then given by $H(x,r)=g^{ij}(x,r)h_{ij}(x,r)$.

The curvature tensor $\widetilde R_{\alpha\beta\gamma\delta}$ of $(M^3, g_{\alpha\beta})$ has six components, which are 
not completely independent because of Bianchi identities. In the collar neighborhood of $\Sigma$, these components can be 
classified into three groups:
\begin{enumerate}
   \item there are three curvature equations of the form
         $\widetilde R_{i3j3}=-g_{ij}$, here $1\leq{}i,j\leq{}2$,
   \item two of remaining curvature equations have the form
         $\widetilde R_{ijk3}=0$,
         which are called the Codazzi equations, and
   \item the Gauss equation
         $\widetilde R_{1212}=-g_{11}g_{22}+g_{12}^{2}$.
\end{enumerate}
This enables us to solve for the metric $g$ on $M^3$ explicitly, and the solution is due to Uhlenbeck \cite{Uhl83}. In the following we will denote $\vec{n} = \frac{\partial}{\partial r}$.
\bl\label{U-metric}
 With respect to the dual basis $$\{dr=\vec{n}^*, \widetilde e_1^*, \widetilde e_2^*\}\,,$$ 
 the metric $g$ in an almost Fuchsian manifold $M^3 = \Sigma \times \mathbb{R}$ has the form:
\begin{equation}\label{metric}
   g=dr^2+g(r)\ ,
\end{equation}
where $r \in (-\infty,\infty)$ and
\allowdisplaybreaks
\begin{align}\label{u-metric-2}
    g(r)= \begin{bmatrix} \widetilde  e_1^* & \widetilde e_2^*\end{bmatrix}
    E^2 \begin{bmatrix} \widetilde  e_1^*\\ \widetilde e_2^* \end{bmatrix} = \begin{bmatrix} \widetilde  e_1^* & \widetilde e_2^* \end{bmatrix} \left[\cosh(r)I_2+\sinh(r)A_{\Sigma}\right]^2\begin{bmatrix} \widetilde  e_1^*\\ \widetilde e_2^*\end{bmatrix} 
\end{align}
for the symmetric matrix 
\allowdisplaybreaks
\begin{align}\label{matrix-E}
E & =\cosh(r)I_2+\sinh(r)A_{\Sigma}\notag\\
&= \begin{bmatrix}
\cosh(r)+a\sinh(r) & b\sinh(r) \\
b\sinh(r) & \cosh(r)-a\sinh(r)
\end{bmatrix}\,.
\end{align}
\el
\begin{remark} \label{exp-ex-1}
Uhlenbeck in \cite{Uhl83} used the isothermal coordinates on $\Sigma$ to construct the explicit metric on $M^3$ while we use the orthonormal coordinates (see the subsection \S 3.1). Explicitly we have
 \begin{align*}
E^2 = 
    \begin{bmatrix}
1+(1+\lambda^2(0))\sinh^2(r) + a\sinh(2r) & b\sinh(2r) \\
b \sinh(2r) & 1+(1+\lambda^2(0))\sinh^2(r) - a\sinh(2r)
\end{bmatrix}.
\end{align*}   
We also have
\allowdisplaybreaks
\begin{equation}\label{Erderivative}
\frac{\p E}{\p r}=\begin{bmatrix}
\sinh(r)+a\cosh(r) & b\cosh(r) \\
b\cosh(r) & \sinh(r)-a\cosh(r)
\end{bmatrix}\,,
\end{equation}
\begin{equation}\label{Etimes-er}
E\frac{\p E}{\p r}=\begin{bmatrix}
\frac{1+\lambda^2(0)}{2}\sinh(2r)+a\cosh(2r) & b\cosh(2r) \\
b\cosh(2r) & \frac{1+\lambda^2(0)}{2}\sinh(2r)-a\cosh(2r)
\end{bmatrix}\,,
\end{equation}
\begin{align}\label{Edeterminant}
\det(E) = \cosh^2(r)-\sinh^2(r)\det(A_{\Sigma}) &= \cosh^2(r)-\sinh^2(r)\lambda^2(0)\notag\\
&= 1+(1-\lambda^2(0))\sinh^2(r)\,,
\end{align} 
\begin{align}\label{Einverse}
E^{-1}&=\frac{1}{\det(E)}\begin{bmatrix}
\cosh(r)-a\sinh(r) & -b\sinh(r) \\
-b\sinh(r) & \cosh(r)+a\sinh(r)
\end{bmatrix}\\
&=\frac{1}{1+(1-\lambda^2(0))\sinh^2(r)}\begin{bmatrix}
\cosh(r)-a\sinh(r) & -b\sinh(r) \\
-b\sinh(r) & \cosh(r)+a\sinh(r)
\end{bmatrix}\,.\notag
\end{align}
and 
\allowdisplaybreaks
\begin{align}\label{ginverse}
g^{-1}&=E^{-2}\\
=&\left(1+(1-\lambda^2(0))\sinh^2(r)\right)^{-2}\cdot\notag\\
&\begin{bmatrix}
1+(1+\lambda^2(0))\sinh^2(r)-a\sinh(2r) & -b\sinh(2r) \\
-b\sinh(2r) &1+(1+\lambda^2(0))\sinh^2(r)+a\sinh(2r)
\end{bmatrix}\,.\notag
\end{align}
\end{remark}
\subsection{Geometric maximum principle} 
We state a geometric version of the Hopf's maximum principle that is also known as the \textit{geometric maximum principle} or \textit{geometric comparison principle}. 
\begin{theorem}\label{g-m-p}(c.f. \cite[Theorem 3.1.5]{L2013}) Let $\Sigma_1$ and $\Sigma_2$ be two surfaces in $M^3$ with a common tangent point $p$. If $\Sigma_1$ lies above $\Sigma_2$ in the direction $\vec{n}=\frac{\partial}{\partial r}$ around $p$, then the mean curvatures of $\Sigma_1$ and $\Sigma_2$ at $p$, denoted by $H_1(p)$ and $H_2(p)$ respectively, have the following comparison:
\begin{equation}
    H_1(p) \leq H_2(p).
\end{equation}
\end{theorem}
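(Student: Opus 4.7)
The plan is to reduce the statement to the elementary fact that a nonnegative function attaining a zero minimum at an interior point has positive semidefinite Hessian there. First I would choose geodesic normal coordinates $(x^{1}, x^{2}, x^{3})$ on $M^3$ centered at $p$ so that $\partial/\partial x^3 = \vec{n}$ at $p$ and the common tangent plane $T_p\Sigma_1 = T_p\Sigma_2$ coincides with the slice $\{x^3 = 0\}$. In these coordinates the ambient metric satisfies $g_{\alpha\beta}(0) = \delta_{\alpha\beta}$ and $\partial_\gamma g_{\alpha\beta}(0) = 0$; in particular every Christoffel symbol of $(M^3, g)$ vanishes at the origin.

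By the implicit function theorem, each surface $\Sigma_i$ can be written locally near $p$ as a graph $x^3 = u_i(x^1, x^2)$ with $u_i(0) = 0$ and $\nabla u_i(0) = 0$. The hypothesis that $\Sigma_1$ lies above $\Sigma_2$ in the direction $\vec{n}$ translates exactly into the pointwise inequality $u_1 \geq u_2$ on a neighborhood of the origin, with equality at the origin.

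Next I would compute the two mean curvatures at $p$ from the graph parameterizations $F_i(x^1, x^2) = (x^1, x^2, u_i(x^1, x^2))$. Because $\nabla u_i(0) = 0$, the unit normal to $\Sigma_i$ at $p$ equals $\partial/\partial x^3$; because the Christoffel symbols vanish at $0$, every connection term in $\bar\nabla_{\partial_{x^j}} \vec{n}$ drops out at the origin. The paper's convention $h_{jk} = \langle \bar\nabla_{e_j}\vec{n}, e_k\rangle$ then yields
\[
H_i(p) \;=\; -\Delta u_i(0),
\]
where $\Delta$ denotes the flat Laplacian in the variables $x^1, x^2$. (Sign sanity check: an upper-hemispherical cap of radius $R$ parametrized by $u = \sqrt{R^2 - x^2 - y^2}$ has $\Delta u(0) = -2/R$, producing $H(p) = 2/R$, in agreement with the outward convention used throughout the paper.)

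Finally, setting $v := u_1 - u_2$, we have $v \geq 0$ near $0$, $v(0) = 0$, and $\nabla v(0) = 0$, so $0$ is a local minimum of $v$. Hence $\mathrm{Hess}\, v(0) \succeq 0$, and tracing gives $\Delta u_1(0) \geq \Delta u_2(0)$, i.e.\ $H_1(p) \leq H_2(p)$. I do not expect any genuine obstacle for the stated (weak) inequality; the only delicate point is bookkeeping the sign conventions on $\vec{n}$ and on the second fundamental form, which is why I would include the spherical sanity check above. The strict version of the geometric comparison principle (that $H_1(p) = H_2(p)$ forces $\Sigma_1 \equiv \Sigma_2$ near $p$) would instead require rewriting $v = u_1 - u_2$ as the solution of a linear elliptic equation obtained by subtracting the two graphical mean curvature equations and then applying Hopf's strong maximum principle; but this strict form is not needed here.
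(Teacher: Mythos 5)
The paper itself does not prove Theorem \ref{g-m-p}; it simply quotes it from \cite[Theorem 3.1.5]{L2013}, so there is no internal proof to compare against. Your argument is the standard tangency-comparison proof (write both surfaces as graphs near the point of tangency and apply the second-derivative test to the difference), and in substance it is correct: in geodesic normal coordinates the Christoffel symbols vanish at $p$, so with the paper's convention $h_{jk}=\langle\overline\nabla_{e_j}\vec{\nu},e_k\rangle$ one indeed gets $H_i(p)=-\Delta u_i(0)$, and $\mathrm{Hess}(u_1-u_2)(0)\succeq 0$ yields $H_1(p)\le H_2(p)$ with the orientation the paper uses (your hemispherical sanity check is consistent with, e.g., $H(\Sigma(r))>0$ for $r>0$ in Remark \ref{principle-curv-equidistant}).

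The one genuine flaw is in your coordinate normalization: you require simultaneously that $\partial/\partial x^3=\vec{n}$ at $p$ and that $T_p\Sigma_1=T_p\Sigma_2=\{x^3=0\}$. Since the coordinate frame of geodesic normal coordinates is orthonormal at $p$, these two conditions force $\vec{n}(p)$ to be the common unit normal of the surfaces at $p$ (i.e. $\Theta(p)=1$), which is not part of the hypothesis: the common tangent plane may be tilted with respect to $\partial/\partial r$, and then you cannot have both $\nabla u_i(0)=0$ and the third axis equal to $\vec{n}$. The fix is routine: take the $x^3$-axis along the common unit normal $\nu(p)$ chosen with $\langle\nu(p),\vec{n}(p)\rangle>0$ (if this inner product vanished, the hypothesis ``lies above in the direction $\vec{n}$'' degenerates; in every application in the paper the tangency occurs at an extremum of a height or distance function, where $\nu(p)=\vec{n}(p)$ anyway), graph both surfaces over $\{x^3=0\}$, and note that by transversality of $\vec{n}$ to both surfaces near $p$ the condition ``$\Sigma_1$ lies above $\Sigma_2$ in the direction $\vec{n}$'' is locally equivalent to ``$\Sigma_1$ lies on the $\nu$-side of $\Sigma_2$,'' i.e. $u_1\ge u_2$; here $H_1(p),H_2(p)$ are computed with respect to this common $\nu(p)$, which is the normal making an acute angle with $\vec{n}$, exactly as in the paper's usage. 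With that adjustment your argument goes through verbatim.
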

Note that the strict inequality is not true in general. 

\subsection{Evolution equations for modified mean curvature flows} 
The short-time existence of the MMCF \eqref{mmcf} is standard for closed immersions, see e.g. \cite{HP96}. In general the flow with initial compact surface develops singularities in finite time along the modified mean curvature flow in {\eus}, c.f. \cite {Hui84, Hui86}.

Our estimates will also involve the {\it height function} $u(x,t)$ and the {\it angle function (or the gradient)}
$\Theta(x,t)$ on {\es}s $S(t)$:
\begin{align}
   u(x,t)&=\ell(F(x,t))\\
      \label{eq:gradient function}
   \Theta(x,t)&=\langle{\vnu(x,t)},{\vec{n}}\rangle\ .
\end{align}
Here $\ell(p) = \pm dist(p,\Sigma)$ for all $p \in M^3$, the distance to the fixed reference surface $\Sigma$, and $\vnu$ is the unit normal vector field on $S(t)$. We always have $\Theta (x,t) \in [0,1]$. It is clear that the surface $S(t)$ is a graph over $\Sigma$ if
$\Theta > 0$ on $S(t)$. In this subsection, we derive the {\ee} of the angle function $\Theta$ on the evolving surface
$S(t)$, $t\in[0,T)$. Note that by our convention, the geometric quantities and operators involved are:
\begin{enumerate}
   \item
   the induced metric of $S(t)$: $g(t)= [g_{ij}(t)]$;
   \item
    the {\sff} of $S(t)$: $A(\cdot,t)=[h_{ij}(\cdot,t)]$;
   \item
   the {\mc} of $S(t)$ with respect to the normal vector pointing to $\Sigma$: $H(\cdot,t)=g^{ij}h_{ij}$;
   \item
   the square norm of the {\sff} of $S(t)$:
         \begin{equation*}
            |A(\cdot,t)|^{2}=g^{ij}g^{kl}h_{ik}h_{jl}\ ;
         \end{equation*}
   \item
   the covariant derivative of $S(t)$ is denoted by $\nabla$ and the Laplacian on $S(t)$ is given by
         $\Delta=g^{ij}\nabla_{i}\nabla_{j}$.
\end{enumerate}

\begin{lem}\label{evoluation-eq-theta}
 The evolution equation of the angle function $\Theta = \langle\vec{\nu}, \vec{n}\rangle$ is:
\begin{align}\label{eq:theta-evolution-new}
\left(\frac{\p}{\p t}-\Delta\right)\Theta =& (|A|^2-2)\Theta+\frac{1}{2}(\overline\nabla_{\vec{\nu}} L_{\vec{n}} g)(e_i, e_i)
-(\overline\nabla_{e_i}L_{\vec{n}} g)(\vec{\nu}, e_i)\notag\\ 
&-(L_{\vec{n}} g)(e_i, e_j)A_{ij} +  \frac{c}{2} L_{\vec{n}} g(\vec{\nu}, \vec{\nu}),
\end{align}
where $\{\vec{\nu}, e_1, e_2\}$ is an local orthonormal frame on the evolving surface $S(t)$ and $L_{\vec{n}} g$ denotes the Lie derivative of $g$ over $M$ in the direction of $\vec{n}$. 
\end{lem}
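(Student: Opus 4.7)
The plan is a direct computation, carried out in a local orthonormal frame $\{e_1, e_2\}$ on $S(t)$ that is geodesic at the base point $p$ (so $\nabla^S_{e_i} e_j = 0$ at $p$ and, by the Gauss formula, $\overline\nabla_{e_i} e_j = -A_{ij} \vec{\nu}$ there). Three standard facts will be used repeatedly: the Weingarten relation $\overline\nabla_{e_i}\vec{\nu} = A_{ij} e_j$; the identity $L_{\vec{n}} g(X,Y) = g(\overline\nabla_X \vec{n}, Y) + g(\overline\nabla_Y \vec{n}, X)$ together with the symmetry of $\overline\nabla\vec{n}$ (since $\vec{n} = \nabla r$ is a gradient field); and the hyperbolic curvature identity $\widetilde R(X,Y)Z = -(g(Y,Z)X - g(X,Z)Y)$, which makes the Codazzi correction $\widetilde R(e_i,e_j,e_k,\vec{\nu})$ vanish and gives $\widetilde R(e_i,\vec{\nu})\vec{n} = -\Theta e_i + g(e_i,\vec{n})\vec{\nu}$.

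First I compute $\partial_t \Theta$. Differentiating $g(\vec{\nu}, \partial_i F) = 0$ under $\partial_t F = -(H-c)\vec{\nu}$ forces $\partial_t \vec{\nu} = \nabla^S H$ (tangential gradient on $S(t)$; the constant $c$ drops out). Evaluating $\vec{n}$ along $F$ gives $\partial_t \vec{n} = \overline\nabla_{\partial_t F} \vec{n} = -(H-c)\,\overline\nabla_{\vec{\nu}} \vec{n}$, so
\[
\partial_t \Theta = g(\nabla^S H, \vec{n}) - \tfrac{H-c}{2}\, L_{\vec{n}} g(\vec{\nu},\vec{\nu}).
\]
Next I compute $\Delta \Theta$ by differentiating $\Theta = g(\vec{\nu}, \vec{n})$ twice; using the hyperbolic Codazzi identity $\nabla^S_i A_{ij} = \nabla^S_j H$ (a consequence of the vanishing of $\widetilde R(e_i,e_j,e_k,\vec{\nu})$), I obtain at $p$
\[
\Delta \Theta = g(\nabla^S H, \vec{n}) - |A|^2 \Theta + 2 A_{ij}\, g(\overline\nabla_{e_i}\vec{n}, e_j) + g\!\left(\vec{\nu},\, \overline\nabla_{e_i}\overline\nabla_{e_i}\vec{n}\right).
\]
Subtracting, the $g(\nabla^S H, \vec{n})$ terms cancel, and by the symmetry of $\overline\nabla \vec{n}$ the third term rearranges to $A_{ij}\, L_{\vec{n}} g(e_i, e_j)$, matching the corresponding term in the statement.

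The final and most delicate step is handling $g(\vec{\nu}, \sum_i \overline\nabla_{e_i}\overline\nabla_{e_i}\vec{n})$. Using Gauss at $p$, $\overline\nabla_{e_i}\overline\nabla_{e_i}\vec{n} = (\overline\nabla^2 \vec{n})(e_i, e_i) - A_{ii}\, \overline\nabla_{\vec{\nu}}\vec{n}$; the $-A_{ii}\overline\nabla_{\vec{\nu}}\vec{n}$ piece produces a $-\frac{H}{2} L_{\vec{n}} g(\vec{\nu},\vec{\nu})$ contribution which combines with the $-\frac{H-c}{2} L_{\vec{n}} g(\vec{\nu},\vec{\nu})$ already present to leave exactly $\frac{c}{2} L_{\vec{n}} g(\vec{\nu},\vec{\nu})$. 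For the remaining $\sum_i g(\vec{\nu}, (\overline\nabla^2 \vec{n})(e_i, e_i))$ I invoke the 2-tensor derivative formula
\[
(\overline\nabla_Z L_{\vec{n}} g)(X, Y) = g\!\left((\overline\nabla^2 \vec{n})(Z, X), Y\right) + g\!\left(X, (\overline\nabla^2 \vec{n})(Z, Y)\right)
\]
and the Ricci identity $(\overline\nabla^2 \vec{n})(X,Y) - (\overline\nabla^2 \vec{n})(Y,X) = \widetilde R(X,Y)\vec{n}$. Specializing $(Z,X,Y)$ to $(e_i, e_i, \vec{\nu})$ and $(\vec{\nu}, e_i, e_i)$ and combining, with $\sum_i g(e_i, \widetilde R(e_i, \vec{\nu})\vec{n}) = -2\Theta$ from the hyperbolic curvature, one arrives at
\[
\sum_i g\!\left(\vec{\nu}, (\overline\nabla^2 \vec{n})(e_i, e_i)\right) = (\overline\nabla_{e_i} L_{\vec{n}} g)(\vec{\nu}, e_i) - \tfrac{1}{2}(\overline\nabla_{\vec{\nu}} L_{\vec{n}} g)(e_i, e_i) + 2\Theta.
\]
Assembling everything yields $(|A|^2 - 2)\Theta$ (the $-2\Theta$ coming from the curvature trace with the sign inherited from $-g(\vec{\nu}, \overline\nabla_{e_i}\overline\nabla_{e_i}\vec{n})$) together with the three Lie-derivative terms and $\tfrac{c}{2} L_{\vec{n}} g(\vec{\nu}, \vec{\nu})$. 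The main obstacle throughout is this final bookkeeping: funnelling all second-order ambient derivatives of $\vec{n}$ into the two combinations $\overline\nabla_{\vec{\nu}} L_{\vec{n}} g$ and $\overline\nabla_{e_i} L_{\vec{n}} g$ while isolating the $-2\Theta$ contribution from the hyperbolic curvature.
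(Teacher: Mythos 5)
Your derivation is correct, and I checked the key identities: the symmetry of $\overline\nabla\vec{n}$ (valid since $\vec{n}=\overline\nabla\ell$ for the signed distance $\ell$, which is globally smooth thanks to Uhlenbeck's equidistant foliation), the decomposition $\overline\nabla_{e_i}\overline\nabla_{e_i}\vec{n}=(\overline\nabla^2\vec{n})(e_i,e_i)-A_{ii}\overline\nabla_{\vec{\nu}}\vec{n}$ at a point where the frame is geodesic, the combination of $-\tfrac{H}{2}L_{\vec{n}}g(\vec{\nu},\vec{\nu})$ with $-\tfrac{H-c}{2}L_{\vec{n}}g(\vec{\nu},\vec{\nu})$ into $\tfrac{c}{2}L_{\vec{n}}g(\vec{\nu},\vec{\nu})$, and the Ricci-identity step with $\sum_i\langle\widetilde R(e_i,\vec{\nu})\vec{n},e_i\rangle=-2\Theta$, which together reproduce \eqref{eq:theta-evolution-new} exactly (the result also passes the consistency check on umbilic equidistant surfaces in the Fuchsian case, where both sides vanish). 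Your route is, however, genuinely different from the paper's: the paper does not rederive the full equation, but quotes the evolution equation of $\Theta$ under the ordinary MCF in a (warped) product manifold from \cite[Theorem 2.3]{HLZ20} and then computes only the single new contribution of the constant speed $c$, namely $\partial_t\vec{n}=-(H-c)\overline\nabla_{\vec{\nu}}\vec{n}$ producing the extra term $c\langle\vec{\nu},\overline\nabla_{\vec{\nu}}\vec{n}\rangle=\tfrac{c}{2}L_{\vec{n}}g(\vec{\nu},\vec{\nu})$ (your treatment of that piece coincides with theirs). What your self-contained computation buys is transparency: it shows explicitly where each term comes from, in particular that the $-2\Theta$ arises from the trace of the ambient curvature of constant sectional curvature $-1$ and that the two Lie-derivative terms are exactly the repackaging of $\overline\nabla^2\vec{n}$ via the Ricci identity, and it uses the space-form Codazzi trace $\nabla^S_iA_{ij}=\nabla^S_jH$ to cancel the $\langle\nabla^SH,\vec{n}\rangle$ terms; the cost is redoing a computation the paper can simply cite, and your argument is tied to the hyperbolic (constant curvature) setting, whereas the quoted result of \cite{HLZ20} is stated for a more general ambient product structure.
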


\begin{proof}
The evolution equation of the angle function under the mean curvature flow in a general production manifold is given in our previous work \cite[Theorem 2.3]{HLZ20}. There is only one extra term in the evolution equation of $\Theta$ under the MMCF, which comes from the $\frac{\partial \Theta}{\partial t}$ term. We use the coordinate system 
set-up in \cite{Hui86}, i.e. a normal coordinate system 
$\{y_\alpha\}$ for $F(p, t)$ in $M$ with the frame vector 
for the first coordinate is $-\vec{\nu}$ at time $t$.

Let $\vec{\nu}=\vec{\nu}^\alpha\frac{\p}{\p y^\alpha}$ and $\vec{n}=\vec{n}^
\alpha\frac{\p}{\p y^\alpha}$. We have $\frac{\p \Theta}
{\p t}=\frac{\p (g^{\alpha\beta}\vec{\nu}^\alpha \vec{n}^\beta)}{\p 
t}$. There are three terms to consider. 
\begin{equation}
\begin{split}  
\frac{\p g_{\alpha\beta}}{\p t}
&= \frac{\p}{\p t}\left\langle\frac{\p}{\p y^\alpha}, \frac{\p}
{\p y^{\beta}}\right\rangle \\
&= -(H-c)\vec{\nu}\left\langle\frac{\p}{\p y^\alpha}, \frac{\p}{\p y^
{\beta}}\right\rangle = 0, \nonumber
\end{split}
\end{equation}
because the Christoffel symbols vanish at the point. Define $\frac{\p \vec{\nu}}{\p t}$ to be $\frac{\p \vec{\nu}
^\alpha}{\p t}\frac{\p}{\p y^\alpha}$ and  $\frac
{\p \vec{n}}{\p t}$ to be $\frac{\p \vec{n}^\alpha}{\p t}\frac
{\p}{\p y^\alpha}$. We can see 
$$\frac{\p \Theta}{\p t}=\<\frac{\p \vec{\nu}}{\p t}, \vec{n}\>+
\<\vec{\nu}, \frac{\p\vec{n}}{\p t}\>.$$
Since $\frac{\p g_{\alpha\beta}}{\p t}=0$ and $\<
\vec{\nu}, \vec{\nu}\>=1$, one easily see $\<\frac{\p \vec{\nu}}{\p t}, 
\vec{\nu}\>=0$, and so $\frac{\p \vec{\nu}}{\p t}$ is a tangent vector for the evolving surface. One can then choose a coordinate for the reference surface around $p$, $\{x^i\}$. We also have 
$$\frac{\p g_{\alpha\beta}}{\p x^k}=\frac{\p F}{\p x^k}
\left\langle \frac{\p}{\p y^\alpha}, \frac{\p}{\p y^{\beta}}\right\rangle=0$$
by the choice of $\{y^\alpha\}$. Now one computes $
\frac{\p \vec{\nu}}{\p t}$ as follows.
\begin{equation}
\begin{split}
\frac{\p \vec{\nu}}{\p t}
&= g^{kl}\<\frac{\p \vec{\nu}}{\p t}, \frac{\p F}{\p x^k}\>\frac
{\p F}{\p x^l} \\
&= -g^{kl}\left\langle \vec{\nu}, \frac{\p^2 F}{\p t\p x^k}\right\rangle\frac{\p F}
{\p x^l} \\
&= -g^{kl}\left\langle \vec{\nu}, \frac{\p (-(H-c)\vec{\nu}^\alpha)}{\p x^k}\frac
{\p}{\p y^\alpha}\right\rangle \frac{\p F}
{\p x^l} \\
&= g^{kl}\frac{\p H}{\p x^k}\frac{\p F}{\p x^l} \\
&= \nabla H, \nonumber
\end{split}
\end{equation}
where $\frac{\p F}{\p x^k}=\frac{\p F^\alpha}{\p x^k}
\frac{\p}{\p y^\alpha}$, $\frac{\p F}{\p t}=\frac{\p F^
\alpha}{\p t}\frac{\p}{\p y^\alpha}=-(H-c)\vec{\nu}^\alpha\frac
{\p}{\p y^\alpha}$ and $\frac{\p^2 F}{\p t\p x^k}
=\frac{\p^2 F^\alpha}{\p t\p x^k}\frac{\p}{\p y^\alpha}=
\frac{\p (-(H-c)v^\alpha)}{\p x^k}\frac{\p}{\p y^\alpha}$. 
The second equality in the above computation is true 
because $\<v, \frac{\p F}{\p x^k}\>=0$ and $\frac{\p g_
{\alpha\beta}}{\p t}=0$. For the fourth equality, one 
makes use of $\<\vec{\nu}, \vec{\nu}\>=1$ and $\frac{\p g_{\alpha
\beta}}{\p x^k}=0$. We can treat the other term similarly.
$$\frac{\p \vec{n}}{\p t}=\frac{\p \vec{n}^\alpha}{\p t}\frac
{\p}{\p y^\alpha}=-(H-c)\vec{\nu}(\vec{n}^\alpha)\frac{\p}
{\p y^\alpha}=-H\overline\nabla_{\vec{\nu}} \vec{n} + c\overline\nabla_{\vec{\nu}} \vec{n},$$
where the last one is again by the choice of $\{y^
\alpha\}$. Thus the extra term in the evolution equation of $\Theta$ under the MMCF is
\begin{align}
c\langle \vec{\nu}, \overline\nabla_{\vec{\nu}} \vec{n} \rangle 
 = & \frac{c}{2} L_{\vec{n}} g(\vec{\nu}, \vec{\nu}).
\end{align}
\end{proof}

\begin{lem}\label{u-t-eq}
\be\label{ut}
u_t = -(H-c)\Theta.
\ene
\end{lem}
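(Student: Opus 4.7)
The plan is to compute $u_t$ by the chain rule, using the definition of the flow and the fact that the gradient of the signed distance function $\ell$ with respect to $\Sigma$ is exactly the vector field $\vec{n}=\partial/\partial r$ in the Uhlenbeck coordinates established in Lemma \ref{U-metric}.

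First I would recall that $u(x,t)=\ell(F(x,t))$ and differentiate in $t$ via the chain rule on $M^3$:
\begin{equation*}
u_t \;=\; \frac{\partial}{\partial t}\,\ell(F(x,t)) \;=\; \bigl\langle \overline{\nabla}\ell,\; F_t \bigr\rangle_{g}.
\end{equation*}
Next I would identify $\overline{\nabla}\ell$. Since $\ell$ is the signed hyperbolic distance to $\Sigma$ and the equidistant surfaces $\Sigma(r)$ foliate $M^3$ with the metric $g=dr^2+g(r)$ from \eqref{metric}, the level sets of $\ell$ are exactly the $\Sigma(r)$ and the unit normal to these level sets is $\vec{n}=\partial/\partial r$. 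Hence $\overline{\nabla}\ell=\vec{n}$.

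Substituting the flow equation $F_t=-(H-c)\vec{\nu}$ from \eqref{mmcf} then gives
\begin{equation*}
u_t \;=\; \bigl\langle \vec{n},\; -(H-c)\vec{\nu} \bigr\rangle \;=\; -(H-c)\,\langle \vec{n},\vec{\nu}\rangle \;=\; -(H-c)\,\Theta,
\end{equation*}
where the last equality is the definition \eqref{eq:gradient function} of the angle function. This is the desired identity, so there is no real obstacle here — the entire content is the chain rule plus the identification of $\overline{\nabla}\ell$ with $\vec{n}$, which is immediate from the Uhlenbeck normal coordinates.
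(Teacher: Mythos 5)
Your proof is correct, and it is simply the spelled-out version of what the paper does: the paper's proof is just ``it follows directly from definition,'' which is exactly your chain-rule computation $u_t=\langle\overline{\nabla}\ell,F_t\rangle$ with $\overline{\nabla}\ell=\vec{n}$ and $F_t=-(H-c)\vec{\nu}$. No difference in substance, so nothing further to add.
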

\begin{proof}
It follows directly from definition.
\end{proof}

\section{Estimates on the angle function}\label{angle-estimate-1}
Our approach of the proof is quite straightforward: the maximum principle, together with the evolution equation of $\Theta=\<\vec{\nu}, \vec{n}\>\geqslant 0$ in Lemma \ref{evoluation-eq-theta}. We aim to prove that the {\es}s stay graphical under appropriate initial conditions (Lemma \ref{keylemma}). The uniform positive lower bound of the angle function $\Theta = \langle \vec{n}, \vec{\nu} \rangle$ gives the uniform $C^1$-estimate of the graph function which represents the evolving surface. Therefore once we have established uniform positive lower bound for $\Theta$, standard parabolic theory (\cite {LSU68}) gives bounds for all higher derivatives. In particular the {\sff} for the {\es} $S(t)$ in $M^3$ is uniformly bounded and the {\mmcf} exists for all time.

In order to execute this approach, we only need to consider the points on the evolving surface $S(t)$ where $\Theta$ achieves the minimum. Without loss of generality, we assume that the minimum point of $\Theta$ on $S(t)$ is $(p,r_0)\in \Sigma(r_0)$ and $\Theta(p,r_0)<1$.

\subsection{The local coordinates} 
Now fix this point of interest $(p,r_0) \in \Sigma(r_0)$ on the evolving surface, at $(p,0)$ on $\Sigma=\Sigma(0)$ we choose the local orthonormal frame
$$
 \{\widetilde {e}_1, \widetilde{e}_2\}(p,0) $$
at $(p,0)$ to match $\{\widehat{e}_1, \widehat{e}_2\}(p,r_0)$. Then we can use $\Sigma$-parallel construction to get a local orthonormal frame $\{\vec{n}, \widetilde e_1, \widetilde e_2\}_{r=0}$ in a neighborhood of $(p,0)$ with 
\begin{equation}\label{frameconst1}
\nabla^{\Sigma} {\widetilde e_1} = \nabla^{\Sigma}\widetilde e_2 =0
\end{equation}
at $(p,0)$ (and so $[\widetilde e_1, \widetilde e_2]=0$ at $(p,0)$). Then we can use the $\vec{n}$-parallel construction of $\{\vec{n}, \widetilde e_1, \widetilde e_2\}_{|_{r=0}}$ to get a frame over $M$, in other words, $$[\vec{n}, \widetilde e_i] =0, \quad i=1,2.$$ 
Note that since $\vec{n}$ is not a Killing vector field, the $\vec{n}$-parallel constructed local frame over $M$ is no longer orthonormal away from the central surface.  We let
\allowdisplaybreaks
\begin{align}\label{dual-e}
\begin{bmatrix}
\widehat e_1 \\
\widehat e_2
\end{bmatrix} (\cdot, r)  = E^{-1}\begin{bmatrix}
\widetilde e_1 \\
\widetilde e_2
\end{bmatrix} (\cdot, r) \quad \text{and}\quad
\begin{bmatrix}
\widehat e_1^* \\
\widehat e_2^*
\end{bmatrix}(\cdot, r) = E \begin{bmatrix}
\widetilde e_1^* \\
\widetilde e_2^*
\end{bmatrix}(\cdot, r) ,
\end{align}
where $E$ is the symmetric matrix 
$E=\cosh(r)I_2+\sinh(r)A_{\Sigma}$
as in \eqref{matrix-E}. 

At the minimum point $(p,r_0) \in \Sigma(r_0)$ of $\Theta$, since $\Theta(p,r_0)<1$ then $\vec{\nu}$ and $\vec{n}$ spans a $2$-plane. Then at this point of interest we have two orthonormal frames $\{\vec{\nu}, e_1, e_2\}$ and $\{\vec{n}, \widehat e_1, \widehat e_2\}$ where $\vec{\nu}$ is the unit normal vector of the evolving surface and we choose the other two vectors $\{e_1, e_2\}$ and $\{\widehat e_1, \widehat e_2\}$ to be symmetric (after an appropriate rotation) with respect to the $\vec{n}$-$\vec{\nu}$ plane, namely, around this point of interest $(p,r_0) \in \Sigma(r_0)$, we have  

\allowdisplaybreaks
\begin{equation}\label{widehat-n}
    \left\{
\begin{aligned}
\vec{n}&=\Theta \vec{\nu}+\frac{\sqrt{1-\Theta^2}}{\sqrt{2}} e_1+\frac{\sqrt{1-\Theta^2}}{\sqrt{2}}e_2,\\
\widehat e_1&=\frac{-\sqrt{1-\Theta^2}}{\sqrt{2}}\vec{\nu}+\frac{\Theta+1}{2}e_1+\frac{\Theta-1}{2} e_2,\\
\widehat e_2&=\frac{-\sqrt{1-\Theta^2}}{\sqrt{2}}\vec{\nu}+\frac{\Theta-1}{2}e_1+\frac{\Theta+1}{2}e_2.
\end{aligned}
\right.
\end{equation}
and the inverse
\allowdisplaybreaks
\begin{equation}\label{coordinate-change-1}
    \left\{
\begin{aligned}
\vec{\nu} &=\Theta \vec{n}+\frac{-\sqrt{1-\Theta^2}}{\sqrt{2}}\widehat e_1+\frac{-\sqrt{1-\Theta^2}}{\sqrt{2}}\widehat e_2,\\
e_1 &=\frac{\sqrt{1-\Theta^2}}{\sqrt{2}}\vec{n}+\frac{\Theta+1}{2}\widehat e_1+\frac{\Theta-1}{2}\widehat e_2,\\
e_2&=\frac{\sqrt{1-\Theta^2}}{\sqrt{2}}\vec{n}+\frac{\Theta-1}{2}\widehat e_1+\frac{\Theta+1}{2}\widehat e_2.
\end{aligned}
\right.
\end{equation}

\begin{remark}
Note that the $\{ \widehat e_1, \widehat e_2\}$ defined in \eqref{dual-e} might not be symmetric at points other than $p$.
\end{remark}
Note also that since 
$$E^{-1}\begin{bmatrix}
\widetilde e_1 \\
\widetilde e_2
\end{bmatrix}\odot (\widetilde e_1, \widetilde e_2)E^{-1}= \begin{bmatrix}
\widetilde e_1 \\
\widetilde e_2
\end{bmatrix}_{|_{r=0}}\odot \begin{bmatrix}
\widetilde e_1 &
\widetilde e_2
\end{bmatrix}_{|_{r=0}} = I_2,$$
we have
$$\begin{bmatrix}
\widetilde e_1 \\
\widetilde e_2
\end{bmatrix}\odot \begin{bmatrix}
\widetilde e_1 &
\widetilde e_2
\end{bmatrix} =E^2,$$
where $\odot$ is just the metric product on $M$.

Now by the general formula for Levi-Civita connection $\nabla$ for the metric $\langle\cdot, \cdot\rangle$, 
$$2\langle\nabla_X Y, Z\rangle=X\langle Y, Z\rangle+Y\langle Z, X\rangle-Z\langle X, Y\rangle+\langle [X, Y], Z\rangle-\langle [Y, Z], X\rangle-\langle [X, Z], Y\rangle,$$
and the fact that Lie brackets between $\vec{n}$, $\widetilde e_1$ and $\widetilde e_2$ all vanish, we have
\allowdisplaybreaks
\begin{equation}\label{nabla-n-n}
\overline\nabla_{\vec{n}} \vec{n}=0,
\end{equation}
$$2\<\overline \nabla _{\widetilde e_1} \vec{n}, \widetilde e_1\>=\frac{\p g_{11}}{\p r}, ~~2\<\overline \nabla _{\widetilde e_1} \vec{n}, \widetilde e_2\>=\frac{\p g_{12}}{\p r},$$
$$2\<\overline \nabla _{\widetilde e_2} \vec{n}, \widetilde e_1\>=\frac{\p g_{12}}{\p r}, ~~2\<\overline \nabla _{\widetilde e_2} \vec{n}, \widetilde e_2\>=\frac{\p g_{22}}{\p r}.$$
Let's set 
\allowdisplaybreaks
\begin{equation}\label{nabla_e_n}
    \begin{bmatrix}
\overline\nabla_{\widetilde e_1} \vec{n} \\
\overline\nabla_{\widetilde e_2} \vec{n}
\end{bmatrix}=\begin{bmatrix}
\alpha & \beta \\
\delta & \eta
\end{bmatrix}\begin{bmatrix}
\widetilde e_1 \\
\widetilde e_2
\end{bmatrix}
\end{equation}
with the $\vec{n}$-components clearly vanishing. So we have
$$
2\begin{bmatrix}
g_{11} & g_{12} \\
g_{12} & g_{22}
\end{bmatrix}\cdot \begin{bmatrix}
\alpha & \delta \\
\beta & \eta
\end{bmatrix}=2\begin{bmatrix}
\widetilde e_1 \\
\widetilde e_2
\end{bmatrix}\odot (\widetilde e_1, \widetilde e_2)\begin{bmatrix}
\alpha & \delta \\
\beta & \eta
\end{bmatrix}=\frac{\p}{\p r}\begin{bmatrix}
g_{11} & g_{12} \\
g_{12} & g_{22}
\end{bmatrix}.
$$

\begin{remark}\label{remark1}
    Since $(g_{ij})=E^2$ is the metric on the equidistant surface $\Sigma(r)$ and only the matrices $I_2$ and $A_{\Sigma}=\begin{bmatrix}
a & b \\
b & -a
\end{bmatrix}$ are involved in the calculation, such matrix multiplications with only $\frac{\p}{\p r}$ involved are commutative. For example, $E$ commutes with $\frac{\partial E}{\partial r}$ and we know $\beta=\delta$, i.e. the above matrix is symmetric and written as
$$F=\begin{bmatrix}
\alpha & \beta \\
\beta & \eta
\end{bmatrix}=E^{-1}\frac{\p E}{\p r}\,,$$
which commutes with $E$ and $\frac{\p E}{\p r}$.
\end{remark}
By the above remark, we have $\delta =\beta$ and (using \eqref{Erderivative} and \eqref{Einverse})
\allowdisplaybreaks
\begin{align}\label{matrixF}
F &= \begin{bmatrix}
\alpha & \beta \\
\beta & \eta
\end{bmatrix}=E^{-1}\frac{\p E}{\p r}\\
&= 
\frac{1}{1+(1-\lambda^2(0))\sinh^2(r)}\begin{bmatrix}
\frac{1}{2}\sinh(2r)(1-\lambda^2(0))+a & b \\
b & \frac{1}{2}\sinh(2r)(1-\lambda^2(0))-a
\end{bmatrix}\,.\notag
\end{align}

Now using \eqref{dual-e}, \eqref{nabla_e_n} (with $\delta =\beta$) and \eqref{matrixF}, we have
$$\begin{bmatrix}
\overline\nabla_{\widehat e_1} \vec{n} \\
\overline\nabla_{\widehat e_2} \vec{n}
\end{bmatrix}=E^{-1}\begin{bmatrix}
\overline\nabla_{\widetilde e_1} \vec{n} \\
\overline\nabla_{\widetilde e_2} \vec{n}
\end{bmatrix}=E^{-1}\begin{bmatrix}
\alpha & \beta \\
\beta & \eta
\end{bmatrix}\begin{bmatrix}
\widetilde e_1 \\
\widetilde e_2
\end{bmatrix}=E^{-1}\begin{bmatrix}
\alpha & \beta \\
\beta & \eta
\end{bmatrix}E\begin{bmatrix}
\widehat e_1 \\
\widehat e_2
\end{bmatrix}=\begin{bmatrix}
\alpha & \beta \\
\beta & \eta
\end{bmatrix}\begin{bmatrix}
\widehat e_1 \\
\widehat e_2
\end{bmatrix}$$
by the commutativity mentioned above (see Remark \ref{remark1}).

\subsection{Relating the second fundamental forms of evolving surface and equidistant surface} At the local minimum point $(p,r_0)\in \Sigma(r_0)$ of $\Theta$, we have $\nabla\Theta=0$. Thus
\allowdisplaybreaks
\begin{align*}
0
=& e_1\<\vec{\nu}, \vec{n}\>(p,r_0) \\
=& \<\overline\nabla_{e_1} \vec{\nu}, \vec{n}\>+\<\vec{\nu}, \overline\nabla_{e_1}\vec{n}\> \\
=& \<\overline\nabla_{e_1} \vec{\nu}, \Theta \vec{\nu}+\frac{\sqrt{1-\Theta^2}}{\sqrt{2}} e_1+\frac{\sqrt{1-\Theta^2}}{\sqrt{2}}e_2\> \\
&+\<\Theta \vec{n}+\frac{-\sqrt{1-\Theta^2}}{\sqrt{2}}\widehat e_1+\frac{-\sqrt{1-\Theta^2}}{\sqrt{2}}\widehat e_2, \overline\nabla_{\frac{\sqrt{1-\Theta^2}}{\sqrt{2}}n+\frac{\Theta+1}{2}\widehat e_1+\frac{\Theta-1}{2}\widehat e_2}\vec{n}\> \\
=& \frac{\sqrt{1-\Theta^2}}{\sqrt{2}}(A_{11}+A_{12})-\frac{\sqrt{1-\Theta^2}}{\sqrt{2}}\<\widehat e_1+\widehat e_2, \frac{\Theta+1}{2}\overline\nabla_{\widehat e_1} \vec{n}+\frac{\Theta-1}{2}\overline\nabla_{\widehat e_2} \vec{n} \> \\
=& \frac{\sqrt{1-\Theta^2}}{\sqrt{2}}(A_{11}+A_{12})-\frac{\sqrt{1-\Theta^2}}{\sqrt{2}}\<\widehat e_1+\widehat e_2, \frac{\Theta+1}{2}(\alpha\widehat e_1+\beta\widehat e_2)+\frac{\Theta-1}{2}(\beta\widehat e_1+\eta\widehat e_2) \> \\
=& \frac{\sqrt{1-\Theta^2}}{\sqrt{2}}(A_{11}+A_{12})-\frac{\sqrt{1-\Theta^2}}{\sqrt{2}}\(\frac{\Theta+1}{2}\alpha+\Theta \beta+\frac{\Theta-1}{2}\eta\), 
\end{align*}
where $A_{ij} = A_{ij}(r_0)$ denotes the second fundamental form of the evolving surface. Similarly, we have
\allowdisplaybreaks
\begin{align*}
0
=& e_2\<\vec{\nu}, \vec{n}\> \\
=& \<\overline\nabla_{e_2} \vec{\nu}, \vec{n}\>+\<\vec{\nu}, \overline\nabla_{e_2}\vec{n}\> \\
=& \<\overline\nabla_{e_2} \vec{\nu}, \Theta \vec{\nu}+\frac{\sqrt{1-\Theta^2}}{\sqrt{2}} e_1+\frac{\sqrt{1-\Theta^2}}{\sqrt{2}}e_2\> \\
&+\<\Theta \vec{n}+\frac{-\sqrt{1-\Theta^2}}{\sqrt{2}}\widehat e_1+\frac{-\sqrt{1-\Theta^2}}{\sqrt{2}}\widehat e_2, \overline\nabla_{\frac{\sqrt{1-\Theta^2}}{\sqrt{2}}n+\frac{\Theta-1}{2}\widehat e_1+\frac{\Theta+1}{2}\widehat e_2}\vec{n}\> \\
=& \frac{\sqrt{1-\Theta^2}}{\sqrt{2}}(A_{12}+A_{22})-\frac{\sqrt{1-\Theta^2}}{\sqrt{2}}\<\widehat e_1+\widehat e_2, \frac{\Theta-1}{2}\overline\nabla_{\widehat e_1} \vec{n}+\frac{\Theta+1}{2}\overline\nabla_{\widehat e_2} \vec{n} \> \\
=& \frac{\sqrt{1-\Theta^2}}{\sqrt{2}}(A_{12}+A_{22})-\frac{\sqrt{1-\Theta^2}}{\sqrt{2}}\<\widehat e_1+\widehat e_2, \frac{\Theta-1}{2}(\alpha\widehat e_1+\beta\widehat e_2)+\frac{\Theta+1}{2}(\beta\widehat e_1+\eta\widehat e_2) \> \\
=& \frac{\sqrt{1-\Theta^2}}{\sqrt{2}}(A_{12}+A_{22})-\frac{\sqrt{1-\Theta^2}}{\sqrt{2}}\(\frac{\Theta-1}{2}\alpha+\Theta \beta+\frac{\Theta+1}{2}\eta\). 
\end{align*}
So at the minimum point $(p,r_0)$ of $\Theta$ we have
$$(A_{11}+A_{12})-\(\frac{\Theta+1}{2}\alpha+\Theta \beta+\frac{\Theta-1}{2}\eta\)=0,$$
$$(A_{12}+A_{22})-\(\frac{\Theta-1}{2}\alpha+\Theta \beta+\frac{\Theta+1}{2}\eta\)=0,$$
which are equivalent to 
\begin{equation}
\label{eq:gradient=0-lin}
A_{11}+2A_{12}+A_{22}=\Theta(\alpha+\eta+2\beta),  ~~~~A_{11}-A_{22}=\alpha-\eta.
\end{equation}
In fact, we have
\begin{align*}
A_{11} &= \frac{H+\alpha-\eta}{2},\\
A_{22} &= \frac{H-\alpha+\eta}{2},\\
A_{12} &= \frac{\Theta(\alpha+\eta+2\beta) - H}{2},
\end{align*}
where $H = A_{11}+A_{22}$ is the mean curvature at the point of interest of the evolving surface. Then
\begin{equation}\label{eq:gradient=0-lin-2}
    A = \begin{bmatrix}
        A_{11} & A_{12}\\
        A_{21} & A_{22}
    \end{bmatrix} = \begin{bmatrix}
        \frac{H+\alpha-\eta}{2} & \frac{\Theta(\alpha+\eta+2\beta) - H}{2}\\
        \frac{\Theta(\alpha+\eta+2\beta) - H}{2} & \frac{H-\alpha+\eta}{2}
    \end{bmatrix}.
\end{equation}
Note that the {\sff} of the equidistant surface $\Sigma(r)$ is (c.f. \eqref{2ndff-A0} and \eqref{matrixF} when $r=0$)
\begin{equation}\label{2ndff-1}
  A(r):= \frac{1}{2}L_{\vec{n}} g=(\widetilde  e_1^*, \widetilde e_2^*)E\frac{\p E}{\p r}(\widetilde  e_1^*, \widetilde e_2^*)^T.
\end{equation}
Thus \eqref{eq:gradient=0-lin} establishes the relation between the second fundamental forms of the evolving surface and the equidistant surface at the minimum point $(p,r_0)$ of $\Theta$.
\begin{remark} \label{mean-curvature-99}
By Remark \ref{principle-curv-equidistant}, we have at $(p,r_0)$
\begin{align}\label{alpha-plus-eta}
    \alpha+\eta &=\tanh(\tanh^{-1}(-\lambda(p,0))+r_0) + \tanh(\tanh^{-1}(\lambda(p,0))+r_0) \notag\\
    &= \frac{\sinh(2r_0)(1-\lambda^2(p,0))}{1+(1-\lambda^2(p,0))\sinh^2(r_0)} = \frac{2(1-\lambda^2(p,0)) \tanh(r_0)}{1- \lambda^2(p,0) \tanh^2(r_0)}\,,
    \end{align}
    and
\begin{align}\label{alpha-minus-eta}
    \alpha-\eta
    = \frac{2a}{1+(1-\lambda^2(p,0))\sinh^2(r_0)}\,,
    \end{align}    
    where we used \eqref{matrixF}.
\end{remark}

\subsection{The term $L_{\vec{n}} g(e_i, e_j)A_{ij}$}
Now we calculate the terms appearing in the evolution equation (\ref{eq:theta-evolution-new}) of $\Theta$ explicitly for estimation. We start with the term $(L_{\vec{n}} g)(e_i. e_j)A_{ij}$. Recall that
\begin{align*}
g &=dr^2+(\widetilde  e_1^*, \widetilde e_2^*)E^2(\widetilde  e_1^*, \widetilde e_2^*)^T\\
&=dr^2+(\widetilde  e_1^*, \widetilde e_2^*)[\cosh(r)I_2+\sinh(r)A_{\Sigma}]^2(\widetilde  e_1^*, \widetilde e_2^*)^T,
\end{align*}
we have 
\begin{align}\label{L-n-g-1}
    L_{\vec{n}} g
    &= 2(\vec{n}^*, \widetilde  e_1^*, \widetilde e_2^*) 
    \begin{bNiceArray}{ccc}
0       & 0 & 0      \\
0       & \Block{2-2} {E\frac{\p E}{\p r}} \\
0       &   &   
    \end{bNiceArray}
    \begin{bmatrix}
\vec{n}^*\\
\widetilde e_1^* \\
\widetilde e_2^*
\end{bmatrix}\notag\\
&=2(\widetilde  e_1^*, \widetilde e_2^*)_{|_{r=0}} E\frac{\p E}{\p r}(\widetilde  e_1^*, \widetilde e_2^*)_{|_{r=0}}^T\,.
    \end{align}
Since the terms involving $\vec{n}$-component all vanish in $L_{\vec{n}} g$, we can ignore the $\vec{n}$-component in calculating $L_{\vec{n}} g(e_i, e_j)$. We have at the point of interest:
\allowdisplaybreaks
\begin{equation}\label{e-1-e-2}
\begin{bmatrix}
e_1 \\
e_2
\end{bmatrix}=\begin{bmatrix}
\frac{\Theta+1}{2} & \frac{\Theta-1}{2} \\
\frac{\Theta-1}{2} & \frac{\Theta+1}{2}
\end{bmatrix} \begin{bmatrix}
\widehat e_1 \\
\widehat e_2
\end{bmatrix}=\begin{bmatrix}
\frac{\Theta+1}{2} & \frac{\Theta-1}{2} \\
\frac{\Theta-1}{2} & \frac{\Theta+1}{2}
\end{bmatrix}E^{-1}\begin{bmatrix}
\widetilde e_1 \\
\widetilde e_2
\end{bmatrix}.
\end{equation}
So we have 
\allowdisplaybreaks
\begin{align}\label{L-n-g-e-i-1}
&\left(L_{\vec{n}} g(e_i, e_j)\right) \\
=& \begin{bmatrix}
e_1 \\
e_2
\end{bmatrix}2(\widetilde  e_1^*, \widetilde e_2^*)E\frac{\p E}{\p r}(\widetilde  e_1^*, \widetilde e_2^*)^T(e_1, e_2) \notag\\
=& \begin{bmatrix}
\frac{\Theta+1}{2} & \frac{\Theta-1}{2} \\
\frac{\Theta-1}{2} & \frac{\Theta+1}{2}
\end{bmatrix}E^{-1}\begin{bmatrix}
\widetilde e_1 \\
\widetilde e_2
\end{bmatrix}2(\widetilde  e_1^*, \widetilde e_2^*)E\frac{\p E}{\p r}(\widetilde  e_1^*, \widetilde e_2^*)^T(\widetilde e_1, \widetilde e_2) E^{-1}\begin{bmatrix}
\frac{\Theta+1}{2} & \frac{\Theta-1}{2} \\
\frac{\Theta-1}{2} & \frac{\Theta+1}{2}
\end{bmatrix} \notag\\
=& \frac{1}{2}\begin{bmatrix}
\Theta+1 & \Theta-1 \\
\Theta-1 & \Theta+1
\end{bmatrix} E^{-1}\frac{\p E}{\p r}\begin{bmatrix}
\Theta+1 & \Theta-1 \\
\Theta-1 & \Theta+1
\end{bmatrix}\notag \\
=&  \frac{1}{2}\((\Theta+1)I_2+(\Theta-1)\begin{bmatrix}
0 & 1 \\
1 & 0
\end{bmatrix}\) \begin{bmatrix}
\alpha & \beta \\
\beta & \eta
\end{bmatrix}
\((\Theta+1)I_2+(\Theta-1)\begin{bmatrix}
0 & 1 \\
1 & 0
\end{bmatrix}\) \notag\\
=& \frac{1}{2}(\Theta+1)^2 \begin{bmatrix}
\alpha & \beta \\
\beta & \eta
\end{bmatrix}+\frac{1}{2}(\Theta^2-1)\begin{bmatrix}
\beta & \eta \\
\alpha & \beta
\end{bmatrix}+\frac{1}{2}(\Theta^2-1)\begin{bmatrix}
\beta & \alpha \\
\eta & \beta
\end{bmatrix}+\frac{1}{2}(\Theta-1)^2\begin{bmatrix}
\eta & \beta \\
\beta & \alpha
\end{bmatrix}.\notag
\end{align}
Hence we arrive at 
\allowdisplaybreaks
\begin{align}
&-L_{\vec{n}} g(e_i, e_j)A_{ij} \\
=&- {\rm Tr} \left((L_{\vec{n}} g(e_i, e_j))(A_{ij})\right) \notag\\
=& -\left(\frac{1}{2}(\Theta+1)^2(\alpha A_{11}+\beta A_{12}+\beta A_{12}+ \eta A_{22})\right. \notag\\
&\left.+\frac{1}{2}(\Theta^2-1)(\beta A_{11}+\eta A_{12}+\alpha A_{12}+ \beta A_{22}) \right.\notag\\
&\left.+\frac{1}{2}(\Theta^2-1)(\beta A_{11}+\alpha A_{12}+\eta A_{12}+ \beta A_{22}) \right.\notag\\
&\left.+\frac{1}{2}(\Theta-1)^2(\eta A_{11}+\beta A_{12}+\beta A_{12}+ \alpha A_{22})\right) \notag\\
=& -\left(\frac{1}{2}\Theta^2(A_{11}+2A_{12}+A_{22})(\alpha+2\beta+\eta)+\Theta(A_{11}-A_{22})(\alpha-\eta) \right.\notag\\
&\left. +\frac{1}{2}(\alpha-2\beta+\eta)(A_{11}-2A_{12}+A_{22}) \right)\notag\\
=& -\frac{1}{2}\Theta^3(\alpha+2\beta+\eta)^2-\Theta(\alpha-\eta)^2-\frac{1}{2}(\alpha-2\beta+\eta)\left[-\Theta(\alpha+2\beta+\eta)+2H\right],\notag
\end{align}
where we have used (\ref{eq:gradient=0-lin}) in the last step. 

\subsection{The terms $\sum_{i=1}^2\left[\frac{1}{2}(\overline\nabla_{\vec{\nu}} L_{\vec{n}} g)(e_i, e_i) -(\overline\nabla_{e_i}L_{\vec{n}} g)(\vec{\nu}, e_i)\right]$}
Next we will use \eqref{dual-e}, \eqref{coordinate-change-1} and calculate $\overline\nabla_{\vec{\nu}} L_{\vec{n}} g$ and $\overline\nabla_{e_i} L_{\vec{n}} g$ using $\overline\nabla_{\widetilde e_i} L_{\vec{n}} g$ and $\overline\nabla_{\vec{n}} L_{\vec{n}} g$. We first calculate $\overline\nabla_{\vec{n}} L_{\vec{n}}g$. Recall again that 
$$L_{\vec{n}} g=2(\widetilde  e_1^*, \widetilde e_2^*)E\frac{\p E}{\p r}(\widetilde  e_1^*, \widetilde e_2^*)^T.$$
Since $[\vec{n}, \widetilde e_i]=0$, we have $\overline\nabla_{\widetilde e_i} \vec{n}=\overline\nabla_{\vec{n}} \widetilde e_i$. Note also that we have \eqref{nabla-n-n}, i.e., $\overline\nabla_{\vec{n}} \vec{n}=0$, so (see \eqref{matrixF})
\begin{equation}\label{nabla-n-e12}
\begin{bmatrix}
\overline\nabla_{\vec{n}} \vec{n}\\
\overline\nabla_{\vec{n}} \widetilde e_1 \\
\overline\nabla_{\vec{n}} \widetilde e_2
\end{bmatrix}=\begin{bmatrix}
0\\
\overline\nabla_{\widetilde e_1} \vec{n} \\
\overline\nabla_{\widetilde e_2} \vec{n}
\end{bmatrix}=\begin{bmatrix}
0&0&0\\
0&\alpha & \beta \\
0& \beta & \eta
\end{bmatrix}\begin{bmatrix}
\vec{n}\\
\widetilde e_1 \\
\widetilde e_2
\end{bmatrix}=\begin{bNiceArray}{ccc}
0       & 0 & 0      \\
0       & \Block{2-2} {F} \\
0       &   &   
    \end{bNiceArray}\begin{bmatrix}
\vec{n}\\
\widetilde e_1 \\
\widetilde e_2
\end{bmatrix}.
\end{equation}
Thus we have
\begin{align*}
0=\overline\nabla_{\vec{n}}\(\begin{bmatrix}
\vec{n}\\
\widetilde e_1 \\
\widetilde e_2
\end{bmatrix}\circ(\vec{n}^*, \widetilde e_1^*, \widetilde e_2^*)\)=\begin{bmatrix}
0&0&0\\
0&\alpha & \beta \\
0& \beta & \eta
\end{bmatrix}+\begin{bmatrix}
\vec{n}\\
\widetilde e_1 \\
\widetilde e_2
\end{bmatrix}\circ\overline\nabla_{\vec{n}}(\vec{n}^*, \widetilde e_1^*, \widetilde e_2^*),
\end{align*}
where $\circ$ is the contraction between dual elements in dual spaces. Therefore, \begin{equation}
    \nabla_{\vec{n}} \vec{n}^* =0 
\end{equation}
and
\begin{equation*}
    \overline\nabla_{\vec{n}}(\vec{n}^*,\widetilde e_1^*, \widetilde e_2^*)=-(\vec{n}^*,\widetilde e_1^*, \widetilde e_2^*)\begin{bmatrix}
0&0&0\\
0&\alpha & \beta \\
0& \beta & \eta
\end{bmatrix}=-(\vec{n}^*, \widetilde e_1^*, \widetilde e_2^*)\begin{bNiceArray}{ccc}
0       & 0 & 0      \\
0       & \Block{2-2} {F} \\
0       &   &   
    \end{bNiceArray}.
\end{equation*}
Now we can use \eqref{L-n-g-1} to calculate $\overline\nabla_{\vec{n}} L_{\vec{n}} g$ as follows (again, since the terms involving the $\vec{n}$-component all vanish in $L_{\vec{n}} g$ and $\nabla_{\vec{n}} \vec{n}^*=0$, we can ignore the $\vec{n}$-component in calculating $\overline\nabla_{\vec{n}} L_{\vec{n}} g $):
\begin{align}\label{nabla-n-lng}
&\overline\nabla_{\vec{n}} L_{\vec{n}} g \notag\\
=& -2(\widetilde e_1^*, \widetilde e_2^*) F E\frac{\p E}{\p r} (\widetilde e_1^*, \widetilde e_2^*)^T-2(\widetilde e_1^*, \widetilde e_2^*)E\frac{\p E}{\p r}F(\widetilde e_1^*, \widetilde e_2^*)^T \notag\\
&+2(\widetilde  e_1^*, \widetilde e_2^*) \(\(\frac{\p E}{\p r}\)^2+E\frac{\p^2 E}{\p r^2}\)(\widetilde  e_1^*, \widetilde e_2^*)^T \notag\\
=& 2(\widetilde e_1^*, \widetilde e_2^*) \(E^2 - \(\frac{\p E}{\p r}\)^2\) (\widetilde e_1^*, \widetilde e_2^*)^T\notag\\
=& 2(1-\lambda^2(0))(\widetilde e_1^*, \widetilde e_2^*) I_2 (\widetilde e_1^*, \widetilde e_2^*)^T,
\end{align}
where we've also used Remark \eqref{remark1}, $\frac{\p^2 E}{\p r^2} = E$ and \eqref{dual-e}.

Now we calculate $\overline\nabla_{\widetilde e_i} L_{\vec{n}} g$. Let's set 
$$\overline\nabla_{\widetilde e_i} \widetilde e_j=- A_{ij}n+B_{ij}\widetilde e_1+C_{ij}\widetilde e_2 $$
at a point $(p,r)\in \Sigma(r).$ Note also that the Christoffel symbols $B_{ij}$ and $C_{ij}$ are of course symmetric since $0=[\widetilde e_i, \widetilde e_j]=\overline\nabla_{\widetilde e_i} \widetilde e_j-\overline\nabla_{\widetilde e_j} \widetilde e_i$ at the point $p\in\Sigma = \Sigma(0)$ and the local frame on $M$ is $\vec{n}$-parallel constructed from $\{\vec{n}, \widetilde e_1, \widetilde e_2\}|_{r=0}$.
By \eqref{frameconst1} we know that on $\Sigma =\Sigma(0)$:
\begin{equation}\label{BC1}
B_{ij} = C_{ij} =0, \quad \forall i, j =1,2.
\end{equation}
Note also that by \eqref{2ndff-1} and \eqref{Etimes-er} we have with respect to $\{\widetilde e_1, \widetilde e_2\}$
\allowdisplaybreaks
\begin{align*}
A_{ij} &= \left(A(\cdot, r)\right)_{ij} = E\frac{\partial E}{\partial r}\\
&=\begin{bmatrix}
\frac{1+\lambda^2(0)}{2}\sinh(2r)+a\cosh(2r) & b\cosh(2r) \\
b\cosh(2r) & \frac{1+\lambda^2(0)}{2}\sinh(2r)-a\cosh(2r)
\end{bmatrix}.
\end{align*}

By the formula for the Levi-Civita connection, we have at the point under consideration,  
$$\<\overline\nabla_{\widetilde e_1} \widetilde e_1, \widetilde e_1\>=\frac{1}{2}\widetilde e_1\<\widetilde e_1, \widetilde e_1\>, ~~\<\overline\nabla_{\widetilde e_1} \widetilde e_1, \widetilde e_2\>=\widetilde e_1\<\widetilde e_1, \widetilde e_2\>-\frac{1}{2}\widetilde e_2\<\widetilde e_1, \widetilde e_1\>,$$
$$\<\overline\nabla_{\widetilde e_1} \widetilde e_2, \widetilde e_1\>=\frac{1}{2}\widetilde e_2\<\widetilde e_1, \widetilde e_1\>, ~~\<\overline\nabla_{\widetilde e_1} \widetilde e_2, \widetilde e_2\>=\frac{1}{2}\widetilde e_1\<\widetilde e_2, \widetilde e_2\>,$$
$$\<\overline\nabla_{\widetilde e_2} \widetilde e_2, \widetilde e_2\>=\frac{1}{2}\widetilde e_2\<\widetilde e_2, \widetilde e_2\>,  ~~\<\overline\nabla_{\widetilde e_2} \widetilde e_2, \widetilde e_1\>=\widetilde e_2\<\widetilde e_1, \widetilde e_2\>-\frac{1}{2}\widetilde e_1\<\widetilde e_2, \widetilde e_2\>.$$
Namely, we have 
\begin{align}\label{chris1}
    \begin{bmatrix}
B_{11}  \\
C_{11} 
\end{bmatrix} =\frac{1}{2}g^{-1}\begin{bmatrix}
\partial_1 g_{11}  \\
2\partial_1 g_{12}-\partial_2 g_{11}
\end{bmatrix}
\end{align}
\begin{align}\label{chris2}
    \begin{bmatrix}
B_{12}  \\
C_{12} 
\end{bmatrix} =\frac{1}{2}g^{-1}\begin{bmatrix}
\partial_2 g_{11}  \\
\partial_1 g_{22}  
\end{bmatrix}
\end{align}
and
\begin{align}\label{chris3}
    \begin{bmatrix}
B_{22}  \\
C_{22} 
\end{bmatrix} =\frac{1}{2}g^{-1}\begin{bmatrix}
2\partial_2 g_{12}-\partial_1 g_{22}  \\
\partial_2 g_{22}
\end{bmatrix}\,,
\end{align}
where $\partial_i := \overline\nabla_{\widetilde e_i}$. Then one can use Remark \ref{exp-ex-1} to get explicit expressions for general $B_{ij}$ and $C_{ij}$ on $\Sigma(r)$:
\beq
 B_{11} = -\frac{2\sinh(r)(-m\cosh(r) + (a m + b n) \sinh(r))}{1+\lambda^2(0)+(1-\lambda^2(0))\cosh(2r)},
\eeq
\beq
 B_{12} = B_{21}= \frac{\sinh(r)(n \cosh(r) + (b m - a n) \sinh(r))}{1+\lambda^2(0)+(1-\lambda^2(0))\cosh(2r)},
\eeq
\beq
   B_{22} = \frac{2\sinh(r)(-m\cosh(r) + (a m + b n) \sinh(r))}{1+\lambda^2(0)+(1-\lambda^2(0))\cosh(2r)},
\eeq
\beq
   C_{11} = \frac{2\sinh(r)(n \cosh(r) + (-b m + a n) \sinh(r))}{1+\lambda^2(0)+(1-\lambda^2(0))\cosh(2r)},
\eeq
\beq
   C_{12}= C_{21} = -\frac{2\sinh(r)(m\cosh(r) + (a m + b n) \sinh(r))}{1+\lambda^2(0)+(1-\lambda^2(0))\cosh(2r)},
   \eeq
and
\beq
   C_{22} = -\frac{2\sinh(r) (n\cosh(r) + (-b m + a n) \sinh(r))}{1+\lambda^2(0)+(1-\lambda^2(0))\cosh(2r)}.
   \eeq
\begin{remark}
Clearly we have $B_{ij} = C_{ij} = 0$ on $\Sigma=\Sigma(0)$ as mentioned before. We have also used the Codazzi's equation
$$
m= \partial_1 a = \partial_1 (A_{\Sigma})_{11} = -\partial_1 (A_{\Sigma})_{22} = -\partial_2 (A_{\Sigma})_{12}=-\partial_2 b
$$
and
$$
n= \partial_2 a = \partial_2 (A_{\Sigma})_{11} = \partial_1 (A_{\Sigma})_{12} = \partial_1 b\,,
$$
to simply the expressions of $B_{ij}$'s and $C_{ij}$'s.
\end{remark}

In summary, we have (see \eqref{nabla-n-e12})
\begin{equation}\label{nabla-e}
\overline\nabla_{\widetilde e_1}\begin{bmatrix}
\vec{n}\\
\widetilde e_1 \\
\widetilde e_2
\end{bmatrix}=\begin{bmatrix}
0 & \alpha & \beta \\
-A_{11} & B_{11} & C_{11} \\
-A_{12} & B_{12} & C_{22}
\end{bmatrix}
\begin{bmatrix}
\vec{n}\\
\widetilde e_1 \\
\widetilde e_2
\end{bmatrix},
\end{equation}
\begin{equation}\label{nabla-e-1}
\overline\nabla_{\widetilde e_2}\begin{bmatrix}
\vec{n}\\
\widetilde e_1 \\
\widetilde e_2
\end{bmatrix}=\begin{bmatrix}
0 & \beta & \eta \\
-A_{21} & B_{21} & C_{21} \\
-A_{22} & B_{22} & C_{22}
\end{bmatrix}\begin{bmatrix}
\vec{n}\\
\widetilde e_1 \\
\widetilde e_2
\end{bmatrix},
\end{equation}
and so 
\begin{equation}\label{nabla-e-star}
\overline\nabla_{\widetilde e_1}(\vec{n}^*, \widetilde e_1^*, \widetilde e_2^*)
=-(\vec{n}^*, \widetilde e_1^*, \widetilde e_2^*)\begin{bmatrix}
0 & \alpha & \beta \\
-A_{11} & B_{11} & C_{11} \\
-A_{12} & B_{12} & C_{12}
\end{bmatrix},
\end{equation}
\begin{equation}\label{nabla-e-star-1}
\overline\nabla_{\widetilde e_2}(\vec{n}^*, \widetilde e_1^*, \widetilde e_2^*)
=-(\vec{n}^*, \widetilde e_1^*, \widetilde e_2^*)\begin{bmatrix}
0 & \beta & \eta \\
-A_{21} & B_{21} & C_{21} \\
-A_{22} & B_{22} & C_{22}
\end{bmatrix}.
\end{equation}

Recall again that (see \eqref{L-n-g-1})
\begin{align*}
    L_{\vec{n}} g
    &= 2(\vec{n}^*, \widetilde  e_1^*, \widetilde e_2^*)
    \begin{bNiceArray}{ccc}
0       & 0 & 0      \\
0       & \Block{2-2} {E\frac{\p E}{\p r}} \\
0       &   &   
    \end{bNiceArray}
    \begin{bmatrix}
\vec{n}^*\\
\widetilde e_1^* \\
\widetilde e_2^*
\end{bmatrix}\,,
    \end{align*}
    where
\begin{align*}
E\frac{\p E}{\p r}=[\cosh(r)I_2+\sinh(r)A_{\Sigma}]\cdot[\sinh(r)I_2+\cosh(r)A_{\Sigma}]\,.
\end{align*}
Thus we have
\begin{align*}
&\overline\nabla_{\widetilde e_1} L_{\vec{n}} g \\
=& 2\overline\nabla_{\widetilde e_1}(\vec{n}^*,\widetilde  e_1^*, \widetilde e_2^*)\begin{bNiceArray}{ccc}
0       & 0 & 0      \\
0       & \Block{2-2} {E\frac{\p E}{\p r}} \\
0       &   &   
    \end{bNiceArray}\begin{bmatrix}
\vec{n}^*\\
\widetilde e_1^* \\
\widetilde e_2^*
\end{bmatrix}  +2(\vec{n}^*,\widetilde  e_1^*, \widetilde e_2^*)\begin{bNiceArray}{ccc}
0       & 0 & 0      \\
0       & \Block{2-2} {E\frac{\p E}{\p r}} \\
0       &   &   
    \end{bNiceArray}\overline\nabla_{\widetilde e_1}\begin{bmatrix}
\vec{n}^*\\
\widetilde e_1^* \\
\widetilde e_2^*
\end{bmatrix}\\
&+2(\vec{n}^*,\widetilde  e_1^*, \widetilde e_2^*)\begin{bNiceArray}{ccc}
0       & \quad 0 & \quad 0      \\
0       & \Block{2-2} <\small>{\partial_1(E\frac{\p E}{\p r})} \\
0       &   &   
    \end{bNiceArray}\begin{bmatrix}
\vec{n}^*\\
\widetilde e_1^* \\
\widetilde e_2^*
\end{bmatrix} \\
=& -2(\vec{n}^*, \widetilde e_1^*, \widetilde e_2^*)\begin{bmatrix}
0& \alpha & \beta \\
-A_{11} & B_{11} & C_{11} \\
-A_{12} & B_{12} & C_{12}
\end{bmatrix}
\begin{bNiceArray}{ccc}
0       & 0 & 0      \\
0       & \Block{2-2} {E\frac{\p E}{\p r}} \\
0       &   &   
    \end{bNiceArray}\begin{bmatrix}
\vec{n}^*\\
\widetilde e_1^* \\
\widetilde e_2^*
\end{bmatrix} \\
&-2(\vec{n}^*, \widetilde  e_1^*, \widetilde e_2^*) \begin{bNiceArray}{ccc}
0       & 0 & 0      \\
0       & \Block{2-2} {E\frac{\p E}{\p r}} \\
0       &   &   
    \end{bNiceArray}\begin{bmatrix}
0& \alpha & \beta \\
-A_{11} & B_{11} & C_{11} \\
-A_{12} & B_{12} & C_{12}
\end{bmatrix}^T \begin{bmatrix}
\vec{n}^*\\
\widetilde e_1^* \\
\widetilde e_2^*
\end{bmatrix}\\
&+2(\vec{n}^*,\widetilde  e_1^*, \widetilde e_2^*)\begin{bNiceArray}{ccc}
0       & \quad 0 & \quad 0      \\
0       & \Block{2-2} <\small>{\partial_1(E\frac{\p E}{\p r})} \\
0       &   &   
    \end{bNiceArray}\begin{bmatrix}
\vec{n}^*\\
\widetilde e_1^* \\
\widetilde e_2^*
\end{bmatrix} \,,
\end{align*}

\begin{align*}
&\overline\nabla_{\widetilde e_2} L_{\vec{n}} g \\
=& 2\overline\nabla_{\widetilde e_2}(\vec{n}^*,\widetilde  e_1^*, \widetilde e_2^*)\begin{bNiceArray}{ccc}
0       & 0 & 0      \\
0       & \Block{2-2} {E\frac{\p E}{\p r}} \\
0       &   &   
    \end{bNiceArray}\begin{bmatrix}
\vec{n}^*\\
\widetilde e_1^* \\
\widetilde e_2^*
\end{bmatrix}  +2(\vec{n}^*,\widetilde  e_1^*, \widetilde e_2^*)\begin{bNiceArray}{ccc}
0       & 0 & 0      \\
0       & \Block{2-2} {E\frac{\p E}{\p r}} \\
0       &   &   
    \end{bNiceArray}\overline\nabla_{\widetilde e_2}\begin{bmatrix}
\vec{n}^*\\
\widetilde e_1^* \\
\widetilde e_2^*
\end{bmatrix}\\
&+2(\vec{n}^*,\widetilde  e_1^*, \widetilde e_2^*)\begin{bNiceArray}{ccc}
0       & \quad 0 & \quad 0      \\
0       & \Block{2-2} <\small>{\partial_2(E\frac{\p E}{\p r})} \\
0       &   &   
    \end{bNiceArray}\begin{bmatrix}
\vec{n}^*\\
\widetilde e_1^* \\
\widetilde e_2^*
\end{bmatrix} \\
=& -2(\vec{n}^*, \widetilde e_1^*, \widetilde e_2^*)\begin{bmatrix}
0& \beta & \eta \\
-A_{21} & B_{21} & C_{21} \\
-A_{22} & B_{22} & C_{22}
\end{bmatrix}
\begin{bNiceArray}{ccc}
0       & 0 & 0      \\
0       & \Block{2-2} {E\frac{\p E}{\p r}} \\
0       &   &   
    \end{bNiceArray}\begin{bmatrix}
\vec{n}^*\\
\widetilde e_1^* \\
\widetilde e_2^*
\end{bmatrix} \\
&-2(\vec{n}^*, \widetilde  e_1^*, \widetilde e_2^*) \begin{bNiceArray}{ccc}
0       & 0 & 0      \\
0       & \Block{2-2} {E\frac{\p E}{\p r}} \\
0       &   &   
    \end{bNiceArray}\begin{bmatrix}
0& \beta & \eta \\
-A_{21} & B_{21} & C_{21} \\
-A_{22} & B_{22} & C_{22}
\end{bmatrix}^T \begin{bmatrix}
\vec{n}^*\\
\widetilde e_1^* \\
\widetilde e_2^*
\end{bmatrix}\\
&+2(\vec{n}^*,\widetilde  e_1^*, \widetilde e_2^*)\begin{bNiceArray}{ccc}
0       & \quad 0 & \quad 0      \\
0       & \Block{2-2} <\small>{\partial_2(E\frac{\p E}{\p r})} \\
0       &   &   
    \end{bNiceArray}\begin{bmatrix}
\vec{n}^*\\
\widetilde e_1^* \\
\widetilde e_2^*
\end{bmatrix} \,,
\end{align*}
where $\partial_i =\overline\nabla_{\widetilde e_i}$.
Therefore using \eqref{dual-e} and \eqref{coordinate-change-1} we have
\begin{align*}
  \overline\nabla_{\begin{bmatrix}
\vec{\nu}\\
e_1 \\
e_2
\end{bmatrix}}L_{\vec{n}} g &= \begin{bmatrix}
\Theta & \frac{-\sqrt{1-\Theta^2}}{\sqrt{2}} & \frac{-\sqrt{1-\Theta^2}}{\sqrt{2}}\\
\frac{\sqrt{1-\Theta^2}}{\sqrt{2}} & \frac{\Theta+1}{2} & \frac{\Theta-1}{2} \\
\frac{\sqrt{1-\Theta^2}}{\sqrt{2}} & \frac{\Theta-1}{2} & \frac{\Theta+1}{2}
\end{bmatrix}\begin{bNiceArray}{ccc}
1       & \quad 0 & \quad 0      \\
0       & \Block{2-2} <>{E^{-1}} \\
0       &   &   
    \end{bNiceArray}  \overline\nabla_{\begin{bmatrix}
\vec{n}\\
\widetilde e_1 \\
\widetilde e_2
\end{bmatrix}}L_{\vec{n}} g\\
&=\begin{bmatrix}
\Theta & \frac{-\sqrt{1-\Theta^2}}{\sqrt{2}}(E^{-1}_{11}+E^{-1}_{12}) & \frac{-\sqrt{1-\Theta^2}}{\sqrt{2}}(E^{-1}_{22}+E^{-1}_{12})\\
\frac{\sqrt{1-\Theta^2}}{\sqrt{2}} & \frac{\Theta+1}{2}E^{-1}_{11} +\frac{\Theta-1}{2}E^{-1}_{12}  & \frac{\Theta+1}{2}E^{-1}_{12} +\frac{\Theta-1}{2}E^{-1}_{22} \\
\frac{\sqrt{1-\Theta^2}}{\sqrt{2}} & \frac{\Theta-1}{2}E^{-1}_{11} +\frac{\Theta+1}{2}E^{-1}_{12} & \frac{\Theta-1}{2}E^{-1}_{12} +\frac{\Theta+1}{2}E^{-1}_{22}
\end{bmatrix}
\overline\nabla_{\begin{bmatrix}
\vec{n}\\
\widetilde e_1 \\
\widetilde e_2
\end{bmatrix}}L_{\vec{n}} g,
\end{align*}
where (see \eqref{nabla-n-lng})
\begin{align}\label{nabla-n-lng-new}
&\overline\nabla_{\vec{n}} L_{\vec{n}} g = 2(1-\lambda^2(0))(\widetilde e_1^*, \widetilde e_2^*) I_2 (\widetilde e_1^*, \widetilde e_2^*)^T \notag\\
=& 2(\vec{n}^*,\widetilde  e_1^*, \widetilde e_2^*)
\begin{bNiceArray}{ccc}
0       &  0 &  0      \\
0       & 1-\lambda^2(0) & 0 \\
0       & 0  &   1-\lambda^2(0)
    \end{bNiceArray}\begin{bmatrix}
\vec{n}^*\\
\widetilde e_1^* \\
\widetilde e_2^*
\end{bmatrix}.
\end{align}
\vskip 3mm
\noindent In particular,
\allowdisplaybreaks
\begin{align*}
    &
    \frac{1}{2}\overline\nabla_{\vec{\nu}} L_{\vec{n}} g (e_1, e_1)\\
   = & \frac{1}{2}\overline\nabla_{\vec{\nu}} L_{\vec{n}} g \left(\begin{bmatrix}
\frac{\sqrt{1-\Theta^2}}{\sqrt{2}} & \frac{\Theta+1}{2}E^{-1}_{11} +\frac{\Theta-1}{2}E^{-1}_{12}  & \frac{\Theta+1}{2}E^{-1}_{12} +\frac{\Theta-1}{2}E^{-1}_{22}
\end{bmatrix}\begin{bmatrix}
\vec{n}\\
\widetilde e_1 \\
\widetilde e_2
\end{bmatrix},\right.\\
&\left.\begin{bmatrix}
\frac{\sqrt{1-\Theta^2}}{\sqrt{2}} & \frac{\Theta+1}{2}E^{-1}_{11} +\frac{\Theta-1}{2}E^{-1}_{12}  & \frac{\Theta+1}{2}E^{-1}_{12} +\frac{\Theta-1}{2}E^{-1}_{22}
\end{bmatrix}\begin{bmatrix}
\vec{n}\\
\widetilde e_1 \\
\widetilde e_2
\end{bmatrix}\right)\\
=&\frac{1}{2} \begin{bmatrix}\Theta & - \frac{\sqrt{1-\Theta^2}}{\sqrt{2}}(E^{-1}_{11} +E^{-1}_{12}) & - \frac{\sqrt{1-\Theta^2}}{\sqrt{2}}(E^{-1}_{22} +E^{-1}_{12})\end{bmatrix}
\begin{bmatrix} \overline\nabla_{\vec{n}} L_{\vec{n}} g \\ \overline\nabla_{\widetilde e_1} L_{\vec{n}} g \\
\overline\nabla_{\widetilde e_2} L_{\vec{n}} g \end{bmatrix}\\
&\left(\begin{bmatrix}
\frac{\sqrt{1-\Theta^2}}{\sqrt{2}} & \frac{\Theta+1}{2}E^{-1}_{11} +\frac{\Theta-1}{2}E^{-1}_{12}  & \frac{\Theta+1}{2}E^{-1}_{12} +\frac{\Theta-1}{2}E^{-1}_{22}
\end{bmatrix}\begin{bmatrix}
\vec{n}\\
\widetilde e_1 \\
\widetilde e_2
\end{bmatrix},\right.\\
&\left.\begin{bmatrix}
\frac{\sqrt{1-\Theta^2}}{\sqrt{2}} & \frac{\Theta+1}{2}E^{-1}_{11} +\frac{\Theta-1}{2}E^{-1}_{12}  & \frac{\Theta+1}{2}E^{-1}_{12} +\frac{\Theta-1}{2}E^{-1}_{22}
\end{bmatrix}\begin{bmatrix}
\vec{n}\\
\widetilde e_1 \\
\widetilde e_2
\end{bmatrix}\right)\\
=& \begin{bmatrix}
\frac{\sqrt{1-\Theta^2}}{\sqrt{2}} & \frac{\Theta+1}{2}E^{-1}_{11} +\frac{\Theta-1}{2}E^{-1}_{12}  & \frac{\Theta+1}{2}E^{-1}_{12} +\frac{\Theta-1}{2}E^{-1}_{22}
\end{bmatrix} \\
&\cdot\left\{\Theta\cdot\begin{bNiceArray}{ccc}
0       &  0 &  0      \\
0       & 1-\lambda^2(0) & 0 \\
0       & 0  &   1-\lambda^2(0)
    \end{bNiceArray}
  - \frac{\sqrt{1-\Theta^2}}{\sqrt{2}}(E^{-1}_{11} +E^{-1}_{12})\cdot\left(
  -\begin{bmatrix}
0& \alpha & \beta \\
-A_{11} & B_{11} & C_{11} \\
-A_{12} & B_{12} & C_{12}
\end{bmatrix} \right.\right.\\
&\left.\left.\cdot
\begin{bNiceArray}{ccc}
0       & 0 & 0      \\
0       & \Block{2-2} {E\frac{\p E}{\p r}} \\
0       &   &   
    \end{bNiceArray}
  -\begin{bNiceArray}{ccc}
0       & 0 & 0      \\
0       & \Block{2-2} {E\frac{\p E}{\p r}} \\
0       &   &   
    \end{bNiceArray}
    \begin{bmatrix}
0& \alpha & \beta \\
-A_{11} & B_{11} & C_{11} \\
-A_{12} & B_{12} & C_{12}
\end{bmatrix}^T 
+\begin{bNiceArray}{ccc}
0       & \quad 0 & \quad 0      \\
0       & \Block{2-2} <\small>{\partial_1(E\frac{\p E}{\p r})} \\
0       &   &   
    \end{bNiceArray}\right)\right.\\
    &\left.- \frac{\sqrt{1-\Theta^2}}{\sqrt{2}}(E^{-1}_{22} +E^{-1}_{12})\cdot 
\left(
  -\begin{bmatrix}
0& \beta & \eta \\
-A_{21} & B_{21} & C_{21} \\
-A_{22} & B_{22} & C_{22}
\end{bmatrix}\begin{bNiceArray}{ccc}
0       & 0 & 0      \\
0       & \Block{2-2} {E\frac{\p E}{\p r}} \\
0       &   &   
    \end{bNiceArray} \right.\right.\\
&\left.\left.
  -\begin{bNiceArray}{ccc}
0       & 0 & 0      \\
0       & \Block{2-2} {E\frac{\p E}{\p r}} \\
0       &   &   
    \end{bNiceArray}
    \begin{bmatrix}
0& \beta & \eta \\
-A_{21} & B_{21} & C_{21} \\
-A_{22} & B_{22} & C_{22}
\end{bmatrix}^T 
+\begin{bNiceArray}{ccc}
0       & \quad 0 & \quad 0      \\
0       & \Block{2-2} <\small>{\partial_2(E\frac{\p E}{\p r})} \\
0       &   &   
    \end{bNiceArray} 
    \right)
    \right\}\\
&\cdot
\begin{bmatrix}
\frac{\sqrt{1-\Theta^2}}{\sqrt{2}} & \frac{\Theta+1}{2}E^{-1}_{11} +\frac{\Theta-1}{2}E^{-1}_{12}  & \frac{\Theta+1}{2}E^{-1}_{12} +\frac{\Theta-1}{2}E^{-1}_{22}
\end{bmatrix}^T.
\end{align*}
\vskip 3mm
\noindent
Similarly,
\allowdisplaybreaks
\begin{align*}
    &
    \frac{1}{2}\overline\nabla_{\vec{\nu}} L_{\vec{n}} g (e_2, e_2)\\
   = & \frac{1}{2}\overline\nabla_{\vec{\nu}} L_{\vec{n}} g \left(\begin{bmatrix}
\frac{\sqrt{1-\Theta^2}}{\sqrt{2}} & \frac{\Theta-1}{2}E^{-1}_{11} +\frac{\Theta+1}{2}E^{-1}_{12}  & \frac{\Theta-1}{2}E^{-1}_{12} +\frac{\Theta+1}{2}E^{-1}_{22}
\end{bmatrix}\begin{bmatrix}
\vec{n}\\
\widetilde e_1 \\
\widetilde e_2
\end{bmatrix},\right.\\
&\left.\begin{bmatrix}
\frac{\sqrt{1-\Theta^2}}{\sqrt{2}} & \frac{\Theta-1}{2}E^{-1}_{11} +\frac{\Theta+1}{2}E^{-1}_{12}  & \frac{\Theta-1}{2}E^{-1}_{12} +\frac{\Theta+1}{2}E^{-1}_{22}
\end{bmatrix}\begin{bmatrix}
\vec{n}\\
\widetilde e_1 \\
\widetilde e_2
\end{bmatrix}\right)\\
=&\frac{1}{2} \begin{bmatrix}\Theta & - \frac{\sqrt{1-\Theta^2}}{\sqrt{2}}(E^{-1}_{11} +E^{-1}_{12}) & - \frac{\sqrt{1-\Theta^2}}{\sqrt{2}}(E^{-1}_{22} +E^{-1}_{12})\end{bmatrix}
\begin{bmatrix} \overline\nabla_{\vec{n}} L_{\vec{n}} g \\ \overline\nabla_{\widetilde e_1} L_{\vec{n}} g \\
\overline\nabla_{\widetilde e_2} L_{\vec{n}} g \end{bmatrix}\\
&\left(\begin{bmatrix}
\frac{\sqrt{1-\Theta^2}}{\sqrt{2}} & \frac{\Theta-1}{2}E^{-1}_{11} +\frac{\Theta+1}{2}E^{-1}_{12}  & \frac{\Theta-1}{2}E^{-1}_{12} +\frac{\Theta+1}{2}E^{-1}_{22}
\end{bmatrix}\begin{bmatrix}
\vec{n}\\
\widetilde e_1 \\
\widetilde e_2
\end{bmatrix},\right.\\
&\left.\begin{bmatrix}
\frac{\sqrt{1-\Theta^2}}{\sqrt{2}} & \frac{\Theta-1}{2}E^{-1}_{11} +\frac{\Theta+1}{2}E^{-1}_{12}  & \frac{\Theta-1}{2}E^{-1}_{12} +\frac{\Theta+1}{2}E^{-1}_{22}
\end{bmatrix}\begin{bmatrix}
\vec{n}\\
\widetilde e_1 \\
\widetilde e_2
\end{bmatrix}\right)\\
=& \begin{bmatrix}
\frac{\sqrt{1-\Theta^2}}{\sqrt{2}} & \frac{\Theta-1}{2}E^{-1}_{11} +\frac{\Theta+1}{2}E^{-1}_{12}  & \frac{\Theta-1}{2}E^{-1}_{12} +\frac{\Theta+1}{2}E^{-1}_{22}
\end{bmatrix} \\
&\cdot\left\{\Theta\cdot\begin{bNiceArray}{ccc}
0       &  0 &  0      \\
0       & 1-\lambda^2(0) & 0 \\
0       & 0  &   1-\lambda^2(0)
    \end{bNiceArray}
  - \frac{\sqrt{1-\Theta^2}}{\sqrt{2}}(E^{-1}_{11} +E^{-1}_{12})\cdot\left(
  -\begin{bmatrix}
0& \alpha & \beta \\
-A_{11} & B_{11} & C_{11} \\
-A_{12} & B_{12} & C_{12}
\end{bmatrix} \right.\right.\\
&\left.\left.\cdot
\begin{bNiceArray}{ccc}
0       & 0 & 0      \\
0       & \Block{2-2} {E\frac{\p E}{\p r}} \\
0       &   &   
    \end{bNiceArray}
  -\begin{bNiceArray}{ccc}
0       & 0 & 0      \\
0       & \Block{2-2} {E\frac{\p E}{\p r}} \\
0       &   &   
    \end{bNiceArray}
    \begin{bmatrix}
0& \alpha & \beta \\
-A_{11} & B_{11} & C_{11} \\
-A_{12} & B_{12} & C_{12}
\end{bmatrix}^T 
+\begin{bNiceArray}{ccc}
0       & \quad 0 & \quad 0      \\
0       & \Block{2-2} <\small>{\partial_1(E\frac{\p E}{\p r})} \\
0       &   &   
    \end{bNiceArray}\right)\right.\\
    &\left.- \frac{\sqrt{1-\Theta^2}}{\sqrt{2}}(E^{-1}_{22} +E^{-1}_{12})\cdot 
\left(
  -\begin{bmatrix}
0& \beta & \eta \\
-A_{21} & B_{21} & C_{21} \\
-A_{22} & B_{22} & C_{22}
\end{bmatrix}\begin{bNiceArray}{ccc}
0       & 0 & 0      \\
0       & \Block{2-2} {E\frac{\p E}{\p r}} \\
0       &   &   
    \end{bNiceArray} \right.\right.\\
&\left.\left.
  -\begin{bNiceArray}{ccc}
0       & 0 & 0      \\
0       & \Block{2-2} {E\frac{\p E}{\p r}} \\
0       &   &   
    \end{bNiceArray}
    \begin{bmatrix}
0& \beta & \eta \\
-A_{21} & B_{21} & C_{21} \\
-A_{22} & B_{22} & C_{22}
\end{bmatrix}^T 
+\begin{bNiceArray}{ccc}
0       & \quad 0 & \quad 0      \\
0       & \Block{2-2} <\small>{\partial_2(E\frac{\p E}{\p r})} \\
0       &   &   
    \end{bNiceArray} 
    \right)
    \right\}\\
&\cdot
\begin{bmatrix}
\frac{\sqrt{1-\Theta^2}}{\sqrt{2}} & \frac{\Theta-1}{2}E^{-1}_{11} +\frac{\Theta+1}{2}E^{-1}_{12}  & \frac{\Theta-1}{2}E^{-1}_{12} +\frac{\Theta+1}{2}E^{-1}_{22}
\end{bmatrix}^T.
\end{align*}

\vskip 3mm
\noindent
We also have
\allowdisplaybreaks
\begin{align*}
&(\overline\nabla_{e_1}L_{\vec{n}} g)(\vec{\nu}, e_1)\\
=&\begin{bmatrix}
\frac{\sqrt{1-\Theta^2}}{\sqrt{2}} & \frac{\Theta+1}{2}E^{-1}_{11} +\frac{\Theta-1}{2}E^{-1}_{12}  & \frac{\Theta+1}{2}E^{-1}_{12} +\frac{\Theta-1}{2}E^{-1}_{22}
\end{bmatrix}\begin{bmatrix} \overline\nabla_{\vec{n}} L_{\vec{n}} g \\ \overline\nabla_{\widetilde e_1} L_{\vec{n}} g \\
\overline\nabla_{\widetilde e_2} L_{\vec{n}} g \end{bmatrix}\\
&
\left(\begin{bmatrix}\Theta & - \frac{\sqrt{1-\Theta^2}}{\sqrt{2}}(E^{-1}_{11} +E^{-1}_{12}) & - \frac{\sqrt{1-\Theta^2}}{\sqrt{2}}(E^{-1}_{22} +E^{-1}_{12})\end{bmatrix}\begin{bmatrix}
\vec{n}\\
\widetilde e_1 \\
\widetilde e_2
\end{bmatrix},\right.\\
&\left.\begin{bmatrix}
\frac{\sqrt{1-\Theta^2}}{\sqrt{2}} & \frac{\Theta+1}{2}E^{-1}_{11} +\frac{\Theta-1}{2}E^{-1}_{12}  & \frac{\Theta+1}{2}E^{-1}_{12} +\frac{\Theta-1}{2}E^{-1}_{22}
\end{bmatrix}\begin{bmatrix}
\vec{n}\\
\widetilde e_1 \\
\widetilde e_2
\end{bmatrix}\right)\\
=& 2\begin{bmatrix}\Theta & - \frac{\sqrt{1-\Theta^2}}{\sqrt{2}}(E^{-1}_{11} +E^{-1}_{12}) & - \frac{\sqrt{1-\Theta^2}}{\sqrt{2}}(E^{-1}_{22} +E^{-1}_{12})\end{bmatrix} \\
&\cdot\left\{\frac{\sqrt{1-\Theta^2}}{\sqrt{2}} \cdot\begin{bNiceArray}{ccc}
0       &  0 &  0      \\
0       & 1-\lambda^2(0) & 0 \\
0       & 0  &   1-\lambda^2(0)
    \end{bNiceArray}
  +\left( \frac{\Theta+1}{2}E^{-1}_{11} +\frac{\Theta-1}{2}E^{-1}_{12}\right)\right.\\
  &\left.\cdot\left(
  -\begin{bmatrix}
0& \alpha & \beta \\
-A_{11} & B_{11} & C_{11} \\
-A_{12} & B_{12} & C_{12}
\end{bmatrix} \cdot
\begin{bNiceArray}{ccc}
0       & 0 & 0      \\
0       & \Block{2-2} {E\frac{\p E}{\p r}} \\
0       &   &   
    \end{bNiceArray}
  -\begin{bNiceArray}{ccc}
0       & 0 & 0      \\
0       & \Block{2-2} {E\frac{\p E}{\p r}} \\
0       &   &   
    \end{bNiceArray}
    \begin{bmatrix}
0& \alpha & \beta \\
-A_{11} & B_{11} & C_{11} \\
-A_{12} & B_{12} & C_{12}
\end{bmatrix}^T \right.\right.\\
&\left.\left.
+\begin{bNiceArray}{ccc}
0       & \quad 0 & \quad 0      \\
0       & \Block{2-2} <\small>{\partial_1(E\frac{\p E}{\p r})} \\
0       &   &   
    \end{bNiceArray}\right) +\left(\frac{\Theta+1}{2}E^{-1}_{12} +\frac{\Theta-1}{2}E^{-1}_{22}\right)\cdot 
\left(
  -\begin{bmatrix}
0& \beta & \eta \\
-A_{21} & B_{21} & C_{21} \\
-A_{22} & B_{22} & C_{22}
\end{bmatrix}\right.\right.\\
&\left.\left.\cdot \begin{bNiceArray}{ccc}
0       & 0 & 0      \\
0       & \Block{2-2} {E\frac{\p E}{\p r}} \\
0       &   &   
    \end{bNiceArray} 
  -\begin{bNiceArray}{ccc}
0       & 0 & 0      \\
0       & \Block{2-2} {E\frac{\p E}{\p r}} \\
0       &   &   
    \end{bNiceArray}
    \begin{bmatrix}
0& \beta & \eta \\
-A_{21} & B_{21} & C_{21} \\
-A_{22} & B_{22} & C_{22}
\end{bmatrix}^T 
+\begin{bNiceArray}{ccc}
0       & \quad 0 & \quad 0      \\
0       & \Block{2-2} <\small>{\partial_2(E\frac{\p E}{\p r})} \\
0       &   &   
    \end{bNiceArray} 
    \right)
    \right\}\\
&\cdot
\begin{bmatrix}
\frac{\sqrt{1-\Theta^2}}{\sqrt{2}} & \frac{\Theta+1}{2}E^{-1}_{11} +\frac{\Theta-1}{2}E^{-1}_{12}  & \frac{\Theta+1}{2}E^{-1}_{12} +\frac{\Theta-1}{2}E^{-1}_{22}
\end{bmatrix}^T.
\end{align*}

\vskip 3mm
\noindent Similarly,
\allowdisplaybreaks
\begin{align*}
&(\overline\nabla_{e_2}L_{\vec{n}} g)(\vec{\nu}, e_2)\\
=&\begin{bmatrix}
\frac{\sqrt{1-\Theta^2}}{\sqrt{2}} & \frac{\Theta-1}{2}E^{-1}_{11} +\frac{\Theta+1}{2}E^{-1}_{12}  & \frac{\Theta-1}{2}E^{-1}_{12} +\frac{\Theta+1}{2}E^{-1}_{22}
\end{bmatrix}\begin{bmatrix} \overline\nabla_{\vec{n}} L_{\vec{n}} g \\ \overline\nabla_{\widetilde e_1} L_{\vec{n}} g \\
\overline\nabla_{\widetilde e_2} L_{\vec{n}} g \end{bmatrix}\\
&
\left(\begin{bmatrix}\Theta & - \frac{\sqrt{1-\Theta^2}}{\sqrt{2}}(E^{-1}_{11} +E^{-1}_{12}) & - \frac{\sqrt{1-\Theta^2}}{\sqrt{2}}(E^{-1}_{22} +E^{-1}_{12})\end{bmatrix}\begin{bmatrix}
\vec{n}\\
\widetilde e_1 \\
\widetilde e_2
\end{bmatrix},\right.\\
&\left.\begin{bmatrix}
\frac{\sqrt{1-\Theta^2}}{\sqrt{2}} & \frac{\Theta-1}{2}E^{-1}_{11} +\frac{\Theta+1}{2}E^{-1}_{12}  & \frac{\Theta-1}{2}E^{-1}_{12} +\frac{\Theta+1}{2}E^{-1}_{22}
\end{bmatrix}\begin{bmatrix}
\vec{n}\\
\widetilde e_1 \\
\widetilde e_2
\end{bmatrix}\right)\\
=& 2\begin{bmatrix}\Theta & - \frac{\sqrt{1-\Theta^2}}{\sqrt{2}}(E^{-1}_{11} +E^{-1}_{12}) & - \frac{\sqrt{1-\Theta^2}}{\sqrt{2}}(E^{-1}_{22} +E^{-1}_{12})\end{bmatrix} \\
&\cdot\left\{\frac{\sqrt{1-\Theta^2}}{\sqrt{2}} \cdot\begin{bNiceArray}{ccc}
0       &  0 &  0      \\
0       & 1-\lambda^2(0) & 0 \\
0       & 0  &   1-\lambda^2(0)
    \end{bNiceArray}
  +\left( \frac{\Theta-1}{2}E^{-1}_{11} +\frac{\Theta+1}{2}E^{-1}_{12}\right)\right.\\
  &\left.\cdot\left(
  -\begin{bmatrix}
0& \alpha & \beta \\
-A_{11} & B_{11} & C_{11} \\
-A_{12} & B_{12} & C_{12}
\end{bmatrix} \cdot
\begin{bNiceArray}{ccc}
0       & 0 & 0      \\
0       & \Block{2-2} {E\frac{\p E}{\p r}} \\
0       &   &   
    \end{bNiceArray}
  -\begin{bNiceArray}{ccc}
0       & 0 & 0      \\
0       & \Block{2-2} {E\frac{\p E}{\p r}} \\
0       &   &   
    \end{bNiceArray}
    \begin{bmatrix}
0& \alpha & \beta \\
-A_{11} & B_{11} & C_{11} \\
-A_{12} & B_{12} & C_{12}
\end{bmatrix}^T \right.\right.\\
&
\left.\left.+\begin{bNiceArray}{ccc}
0       & \quad 0 & \quad 0      \\
0       & \Block{2-2} <\small>{\partial_1(E\frac{\p E}{\p r})} \\
0       &   &   
    \end{bNiceArray}\right) +\left(\frac{\Theta-1}{2}E^{-1}_{12} +\frac{\Theta+1}{2}E^{-1}_{22}\right)\cdot 
\left(
  -\begin{bmatrix}
0& \beta & \eta \\
-A_{21} & B_{21} & C_{21} \\
-A_{22} & B_{22} & C_{22}
\end{bmatrix}\right.\right.\\
&\left.\left.\cdot
\begin{bNiceArray}{ccc}
0       & 0 & 0      \\
0       & \Block{2-2} {E\frac{\p E}{\p r}} \\
0       &   &   
    \end{bNiceArray}
  -\begin{bNiceArray}{ccc}
0       & 0 & 0      \\
0       & \Block{2-2} {E\frac{\p E}{\p r}} \\
0       &   &   
    \end{bNiceArray}
    \begin{bmatrix}
0& \beta & \eta \\
-A_{21} & B_{21} & C_{21} \\
-A_{22} & B_{22} & C_{22}
\end{bmatrix}^T 
+\begin{bNiceArray}{ccc}
0       & \quad 0 & \quad 0      \\
0       & \Block{2-2} <\small>{\partial_2(E\frac{\p E}{\p r})} \\
0       &   &   
    \end{bNiceArray} 
    \right)
    \right\}\\
&\cdot
\begin{bmatrix}
\frac{\sqrt{1-\Theta^2}}{\sqrt{2}} & \frac{\Theta-1}{2}E^{-1}_{11} +\frac{\Theta+1}{2}E^{-1}_{12}  & \frac{\Theta-1}{2}E^{-1}_{12} +\frac{\Theta+1}{2}E^{-1}_{22}
\end{bmatrix}^T.
\end{align*}

\vskip 3mm
\noindent 
Now set
\allowdisplaybreaks
\begin{align*}
P :&= \begin{bNiceArray}{ccc}
0       &  0 &  0      \\
0       & 1-\lambda^2(0) & 0 \\
0       & 0  &   1-\lambda^2(0)
    \end{bNiceArray}\\
Q_1:&= \begin{bmatrix}
0& \alpha & \beta \\
-A_{11} & B_{11} & C_{11} \\
-A_{12} & B_{12} & C_{12}
\end{bmatrix} \cdot
\begin{bNiceArray}{ccc}
0       & 0 & 0      \\
0       & \Block{2-2} {E\frac{\p E}{\p r}} \\
0       &   &   
    \end{bNiceArray}  \\
    R_1:&=\begin{bNiceArray}{ccc}
0       & \quad 0 & \quad 0      \\
0       & \Block{2-2} <\small>{\partial_1(E\frac{\p E}{\p r})} \\
0       &   &   
    \end{bNiceArray}\\
    Q_2:&= \begin{bmatrix}
0& \beta & \eta \\
-A_{21} & B_{21} & C_{21} \\
-A_{22} & B_{22} & C_{22}
\end{bmatrix} \cdot
\begin{bNiceArray}{ccc}
0       & 0 & 0      \\
0       & \Block{2-2} {E\frac{\p E}{\p r}} \\
0       &   &   
    \end{bNiceArray}  \\
    R_2:&=\begin{bNiceArray}{ccc}
0       & \quad 0 & \quad 0      \\
0       & \Block{2-2} <\small>{\partial_2(E\frac{\p E}{\p r})} \\
0       &   &   
    \end{bNiceArray}.
\end{align*}

\vskip 3mm 
\noindent
Combining all these, we have
\allowdisplaybreaks
\begin{align*}
&\sum_{i=1}^2\left[\frac{1}{2}(\overline\nabla_{\vec{\nu}} L_{\vec{n}} g)(e_i, e_i) -(\overline\nabla_{e_i}L_{\vec{n}} g)(\vec{\nu}, e_i)\right]\\
=& \begin{bmatrix}
\frac{\sqrt{1-\Theta^2}}{\sqrt{2}} & \frac{\Theta+1}{2}E^{-1}_{11} +\frac{\Theta-1}{2}E^{-1}_{12}  & \frac{\Theta+1}{2}E^{-1}_{12} +\frac{\Theta-1}{2}E^{-1}_{22}
\end{bmatrix} \\
&\cdot\left\{\Theta\cdot\begin{bNiceArray}{ccc}
0       &  0 &  0      \\
0       & 1-\lambda^2(0) & 0 \\
0       & 0  &   1-\lambda^2(0)
    \end{bNiceArray}
  - \frac{\sqrt{1-\Theta^2}}{\sqrt{2}}(E^{-1}_{11} +E^{-1}_{12})\right.\\
 & \left.\cdot\left(
  -\begin{bmatrix}
0& \alpha & \beta \\
-A_{11} & B_{11} & C_{11} \\
-A_{12} & B_{12} & C_{12}
\end{bmatrix}
\begin{bNiceArray}{ccc}
0       & 0 & 0      \\
0       & \Block{2-2} {E\frac{\p E}{\p r}} \\
0       &   &   
    \end{bNiceArray}
  -\begin{bNiceArray}{ccc}
0       & 0 & 0      \\
0       & \Block{2-2} {E\frac{\p E}{\p r}} \\
0       &   &   
    \end{bNiceArray}
    \begin{bmatrix}
0& \alpha & \beta \\
-A_{11} & B_{11} & C_{11} \\
-A_{12} & B_{12} & C_{12}
\end{bmatrix}^T \right.\right.\\
&\left.\left.
+\begin{bNiceArray}{ccc}
0       & \quad 0 & \quad 0      \\
0       & \Block{2-2} <\small>{\partial_1(E\frac{\p E}{\p r})} \\
0       &   &   
    \end{bNiceArray}\right)
    - \frac{\sqrt{1-\Theta^2}}{\sqrt{2}}(E^{-1}_{22} +E^{-1}_{12})\cdot 
\left(
  -\begin{bmatrix}
0& \beta & \eta \\
-A_{21} & B_{21} & C_{21} \\
-A_{22} & B_{22} & C_{22}
\end{bmatrix}\begin{bNiceArray}{ccc}
0       & 0 & 0      \\
0       & \Block{2-2} {E\frac{\p E}{\p r}} \\
0       &   &   
    \end{bNiceArray} \right.\right.\\
&\left.\left.
  -\begin{bNiceArray}{ccc}
0       & 0 & 0      \\
0       & \Block{2-2} {E\frac{\p E}{\p r}} \\
0       &   &   
    \end{bNiceArray}
    \begin{bmatrix}
0& \beta & \eta \\
-A_{21} & B_{21} & C_{21} \\
-A_{22} & B_{22} & C_{22}
\end{bmatrix}^T 
+\begin{bNiceArray}{ccc}
0       & \quad 0 & \quad 0      \\
0       & \Block{2-2} <\small>{\partial_2(E\frac{\p E}{\p r})} \\
0       &   &   
    \end{bNiceArray} 
    \right)
    \right\}\\
&\cdot
\begin{bmatrix}
\frac{\sqrt{1-\Theta^2}}{\sqrt{2}} & \frac{\Theta+1}{2}E^{-1}_{11} +\frac{\Theta-1}{2}E^{-1}_{12}  & \frac{\Theta+1}{2}E^{-1}_{12} +\frac{\Theta-1}{2}E^{-1}_{22}
\end{bmatrix}^T\\
&+ \begin{bmatrix}
\frac{\sqrt{1-\Theta^2}}{\sqrt{2}} & \frac{\Theta-1}{2}E^{-1}_{11} +\frac{\Theta+1}{2}E^{-1}_{12}  & \frac{\Theta-1}{2}E^{-1}_{12} +\frac{\Theta+1}{2}E^{-1}_{22}
\end{bmatrix} \\
&\cdot\left\{\Theta\cdot\begin{bNiceArray}{ccc}
0       &  0 &  0      \\
0       & 1-\lambda^2(0) & 0 \\
0       & 0  &   1-\lambda^2(0)
    \end{bNiceArray}
  - \frac{\sqrt{1-\Theta^2}}{\sqrt{2}}(E^{-1}_{11} +E^{-1}_{12})\cdot\left(
  -\begin{bmatrix}
0& \alpha & \beta \\
-A_{11} & B_{11} & C_{11} \\
-A_{12} & B_{12} & C_{12}
\end{bmatrix} \right.\right.\\
&\left.\left.\cdot
\begin{bNiceArray}{ccc}
0       & 0 & 0      \\
0       & \Block{2-2} {E\frac{\p E}{\p r}} \\
0       &   &   
    \end{bNiceArray}
  -\begin{bNiceArray}{ccc}
0       & 0 & 0      \\
0       & \Block{2-2} {E\frac{\p E}{\p r}} \\
0       &   &   
    \end{bNiceArray}
    \begin{bmatrix}
0& \alpha & \beta \\
-A_{11} & B_{11} & C_{11} \\
-A_{12} & B_{12} & C_{12}
\end{bmatrix}^T 
+\begin{bNiceArray}{ccc}
0       & \quad 0 & \quad 0      \\
0       & \Block{2-2} <\small>{\partial_1(E\frac{\p E}{\p r})} \\
0       &   &   
    \end{bNiceArray}\right)\right.\\
    &\left.- \frac{\sqrt{1-\Theta^2}}{\sqrt{2}}(E^{-1}_{22} +E^{-1}_{12})\cdot 
\left(
  -\begin{bmatrix}
0& \beta & \eta \\
-A_{21} & B_{21} & C_{21} \\
-A_{22} & B_{22} & C_{22}
\end{bmatrix}\begin{bNiceArray}{ccc}
0       & 0 & 0      \\
0       & \Block{2-2} {E\frac{\p E}{\p r}} \\
0       &   &   
    \end{bNiceArray} \right.\right.\\
&\left.\left.
  -\begin{bNiceArray}{ccc}
0       & 0 & 0      \\
0       & \Block{2-2} {E\frac{\p E}{\p r}} \\
0       &   &   
    \end{bNiceArray}
    \begin{bmatrix}
0& \beta & \eta \\
-A_{21} & B_{21} & C_{21} \\
-A_{22} & B_{22} & C_{22}
\end{bmatrix}^T 
+\begin{bNiceArray}{ccc}
0       & \quad 0 & \quad 0      \\
0       & \Block{2-2} <\small>{\partial_2(E\frac{\p E}{\p r})} \\
0       &   &   
    \end{bNiceArray} 
    \right)
    \right\}\\
&\cdot
\begin{bmatrix}
\frac{\sqrt{1-\Theta^2}}{\sqrt{2}} & \frac{\Theta-1}{2}E^{-1}_{11} +\frac{\Theta+1}{2}E^{-1}_{12}  & \frac{\Theta-1}{2}E^{-1}_{12} +\frac{\Theta+1}{2}E^{-1}_{22}
\end{bmatrix}^T\\
&- 2\begin{bmatrix}\Theta & - \frac{\sqrt{1-\Theta^2}}{\sqrt{2}}(E^{-1}_{11} +E^{-1}_{12}) & - \frac{\sqrt{1-\Theta^2}}{\sqrt{2}}(E^{-1}_{22} +E^{-1}_{12})\end{bmatrix} \\
&\cdot\left\{\frac{\sqrt{1-\Theta^2}}{\sqrt{2}} \cdot\begin{bNiceArray}{ccc}
0       &  0 &  0      \\
0       & 1-\lambda^2(0) & 0 \\
0       & 0  &   1-\lambda^2(0)
    \end{bNiceArray}
  +\left( \frac{\Theta+1}{2}E^{-1}_{11} +\frac{\Theta-1}{2}E^{-1}_{12}\right)\right.\\
  & \left. \cdot\left(
  -\begin{bmatrix}
0& \alpha & \beta \\
-A_{11} & B_{11} & C_{11} \\
-A_{12} & B_{12} & C_{12}
\end{bmatrix}
\begin{bNiceArray}{ccc}
0       & 0 & 0      \\
0       & \Block{2-2} {E\frac{\p E}{\p r}} \\
0       &   &   
    \end{bNiceArray}
  -\begin{bNiceArray}{ccc}
0       & 0 & 0      \\
0       & \Block{2-2} {E\frac{\p E}{\p r}} \\
0       &   &   
    \end{bNiceArray}
    \begin{bmatrix}
0& \alpha & \beta \\
-A_{11} & B_{11} & C_{11} \\
-A_{12} & B_{12} & C_{12}
\end{bmatrix}^T \right.\right.\\
&\left.\left.
+\begin{bNiceArray}{ccc}
0       & \quad 0 & \quad 0      \\
0       & \Block{2-2} <\small>{\partial_1(E\frac{\p E}{\p r})} \\
0       &   &   
    \end{bNiceArray}\right) +\left(\frac{\Theta+1}{2}E^{-1}_{12} +\frac{\Theta-1}{2}E^{-1}_{22}\right)\cdot 
\left(
  -\begin{bmatrix}
0& \beta & \eta \\
-A_{21} & B_{21} & C_{21} \\
-A_{22} & B_{22} & C_{22}
\end{bmatrix}\right.\right.\\
&\left.\left.\begin{bNiceArray}{ccc}
0       & 0 & 0      \\
0       & \Block{2-2} {E\frac{\p E}{\p r}} \\
0       &   &   
    \end{bNiceArray}
  -\begin{bNiceArray}{ccc}
0       & 0 & 0      \\
0       & \Block{2-2} {E\frac{\p E}{\p r}} \\
0       &   &   
    \end{bNiceArray}
    \begin{bmatrix}
0& \beta & \eta \\
-A_{21} & B_{21} & C_{21} \\
-A_{22} & B_{22} & C_{22}
\end{bmatrix}^T 
+\begin{bNiceArray}{ccc}
0       & \quad 0 & \quad 0      \\
0       & \Block{2-2} <\small>{\partial_2(E\frac{\p E}{\p r})} \\
0       &   &   
    \end{bNiceArray} 
    \right)
    \right\}\\
&\cdot
\begin{bmatrix}
\frac{\sqrt{1-\Theta^2}}{\sqrt{2}} & \frac{\Theta+1}{2}E^{-1}_{11} +\frac{\Theta-1}{2}E^{-1}_{12}  & \frac{\Theta+1}{2}E^{-1}_{12} +\frac{\Theta-1}{2}E^{-1}_{22}
\end{bmatrix}^T\\
&-2\begin{bmatrix}\Theta & - \frac{\sqrt{1-\Theta^2}}{\sqrt{2}}(E^{-1}_{11} +E^{-1}_{12}) & - \frac{\sqrt{1-\Theta^2}}{\sqrt{2}}(E^{-1}_{22} +E^{-1}_{12})\end{bmatrix} \\
&\cdot\left\{\frac{\sqrt{1-\Theta^2}}{\sqrt{2}} \cdot\begin{bNiceArray}{ccc}
0       &  0 &  0      \\
0       & 1-\lambda^2(0) & 0 \\
0       & 0  &   1-\lambda^2(0)
    \end{bNiceArray}
  +\left( \frac{\Theta-1}{2}E^{-1}_{11} +\frac{\Theta+1}{2}E^{-1}_{12}\right)\right.\\
 &\left. \cdot\left(
  -\begin{bmatrix}
0& \alpha & \beta \\
-A_{11} & B_{11} & C_{11} \\
-A_{12} & B_{12} & C_{12}
\end{bmatrix} 
\begin{bNiceArray}{ccc}
0       & 0 & 0      \\
0       & \Block{2-2} {E\frac{\p E}{\p r}} \\
0       &   &   
    \end{bNiceArray}
  -\begin{bNiceArray}{ccc}
0       & 0 & 0      \\
0       & \Block{2-2} {E\frac{\p E}{\p r}} \\
0       &   &   
    \end{bNiceArray}
    \begin{bmatrix}
0& \alpha & \beta \\
-A_{11} & B_{11} & C_{11} \\
-A_{12} & B_{12} & C_{12}
\end{bmatrix}^T \right.\right.\\
&\left.\left.+\begin{bNiceArray}{ccc}
0       & \quad 0 & \quad 0      \\
0       & \Block{2-2} <\small>{\partial_1(E\frac{\p E}{\p r})} \\
0       &   &   
    \end{bNiceArray}\right)
    +\left(\frac{\Theta-1}{2}E^{-1}_{12} +\frac{\Theta+1}{2}E^{-1}_{22}\right)\cdot 
\left(
  -\begin{bmatrix}
0& \beta & \eta \\
-A_{21} & B_{21} & C_{21} \\
-A_{22} & B_{22} & C_{22}
\end{bmatrix}\right.\right.\\
&\left.\left.\begin{bNiceArray}{ccc}
0       & 0 & 0      \\
0       & \Block{2-2} {E\frac{\p E}{\p r}} \\
0       &   &   
    \end{bNiceArray}
  -\begin{bNiceArray}{ccc}
0       & 0 & 0      \\
0       & \Block{2-2} {E\frac{\p E}{\p r}} \\
0       &   &   
    \end{bNiceArray}
    \begin{bmatrix}
0& \beta & \eta \\
-A_{21} & B_{21} & C_{21} \\
-A_{22} & B_{22} & C_{22}
\end{bmatrix}^T 
+\begin{bNiceArray}{ccc}
0       & \quad 0 & \quad 0      \\
0       & \Block{2-2} <\small>{\partial_2(E\frac{\p E}{\p r})} \\
0       &   &   
    \end{bNiceArray} 
    \right)
    \right\}\\
&\cdot
\begin{bmatrix}
\frac{\sqrt{1-\Theta^2}}{\sqrt{2}} & \frac{\Theta-1}{2}E^{-1}_{11} +\frac{\Theta+1}{2}E^{-1}_{12}  & \frac{\Theta-1}{2}E^{-1}_{12} +\frac{\Theta+1}{2}E^{-1}_{22}
\end{bmatrix}^T\\
=& \begin{bmatrix}
\frac{\sqrt{1-\Theta^2}}{\sqrt{2}} \\ \frac{\Theta+1}{2}E^{-1}_{11} +\frac{\Theta-1}{2}E^{-1}_{12}  \\ \frac{\Theta+1}{2}E^{-1}_{12} +\frac{\Theta-1}{2}E^{-1}_{22}
\end{bmatrix}^T \cdot\left\{\Theta\cdot P
  + \sqrt{2(1-\Theta^2)}(E^{-1}_{11} +E^{-1}_{12})\cdot\frac{
  Q_1 +Q_1^T-R_1}{2}\right.\\
    &\left.+ \sqrt{2(1-\Theta^2)}(E^{-1}_{22} +E^{-1}_{12})\cdot\frac{
  Q_2 +Q_2^T-R_2}{2}\right\}\cdot
\begin{bmatrix}
\frac{\sqrt{1-\Theta^2}}{\sqrt{2}} \\ \frac{\Theta+1}{2}E^{-1}_{11} +\frac{\Theta-1}{2}E^{-1}_{12}  \\ \frac{\Theta+1}{2}E^{-1}_{12} +\frac{\Theta-1}{2}E^{-1}_{22}
\end{bmatrix}\\
& +\begin{bmatrix}
\frac{\sqrt{1-\Theta^2}}{\sqrt{2}} \\ \frac{\Theta-1}{2}E^{-1}_{11} +\frac{\Theta+1}{2}E^{-1}_{12}  \\ \frac{\Theta-1}{2}E^{-1}_{12} +\frac{\Theta+1}{2}E^{-1}_{22}
\end{bmatrix}^T \cdot\left\{\Theta\cdot P
  + \sqrt{2(1-\Theta^2)}(E^{-1}_{11} +E^{-1}_{12})\cdot\frac{
  Q_1 +Q_1^T-R_1}{2}\right.\\
    &\left.+ \sqrt{2(1-\Theta^2)}(E^{-1}_{22} +E^{-1}_{12})\cdot\frac{
  Q_2 +Q_2^T-R_2}{2}\right\}\cdot
\begin{bmatrix}
\frac{\sqrt{1-\Theta^2}}{\sqrt{2}} \\ \frac{\Theta-1}{2}E^{-1}_{11} +\frac{\Theta+1}{2}E^{-1}_{12}  \\ \frac{\Theta-1}{2}E^{-1}_{12} +\frac{\Theta+1}{2}E^{-1}_{22}
\end{bmatrix}\\
&+\begin{bmatrix}-2\Theta \\  \sqrt{2(1-\Theta^2)}(E^{-1}_{11} +E^{-1}_{12}) \\  \sqrt{2(1-\Theta^2)}(E^{-1}_{22} +E^{-1}_{12})\end{bmatrix}^T 
\cdot\Big\{\frac{\sqrt{1-\Theta^2}}{\sqrt{2}} \cdot P
  -\left( (\Theta+1)E^{-1}_{11} +(\Theta-1)E^{-1}_{12}\right)\\
  &\cdot \frac{
  Q_1 +Q_1^T-R_1}{2}
  -\left((\Theta+1)E^{-1}_{12} +(\Theta-1)E^{-1}_{22}\right)\cdot \frac{
  Q_2 +Q_2^T-R_2}{2}
\Big\}\cdot
\begin{bmatrix}
\frac{\sqrt{1-\Theta^2}}{\sqrt{2}} \\ \frac{\Theta+1}{2}E^{-1}_{11} +\frac{\Theta-1}{2}E^{-1}_{12}  \\ \frac{\Theta+1}{2}E^{-1}_{12} +\frac{\Theta-1}{2}E^{-1}_{22}
\end{bmatrix}\\
&+\begin{bmatrix}-2\Theta \\  \sqrt{2(1-\Theta^2)}(E^{-1}_{11} +E^{-1}_{12}) \\  \sqrt{2(1-\Theta^2)}(E^{-1}_{22} +E^{-1}_{12})\end{bmatrix}^T
\cdot\Big\{\frac{\sqrt{1-\Theta^2}}{\sqrt{2}} \cdot P
  -\left( (\Theta-1)E^{-1}_{11} +(\Theta+1)E^{-1}_{12}\right)\\
  &\cdot \frac{
  Q_1 +Q_1^T-R_1}{2}
   -\left((\Theta-1)E^{-1}_{12} +(\Theta+1)E^{-1}_{22}\right)\cdot  \frac{
  Q_2 +Q_2^T-R_2}{2}
    \Big\} \cdot
\begin{bmatrix}
\frac{\sqrt{1-\Theta^2}}{\sqrt{2}} \\ \frac{\Theta-1}{2}E^{-1}_{11} +\frac{\Theta+1}{2}E^{-1}_{12}  \\ \frac{\Theta-1}{2}E^{-1}_{12} +\frac{\Theta+1}{2}E^{-1}_{22}
\end{bmatrix}\\
=& {\text{Tr}}
\left\{\left[
U
\cdot T\cdot P +\frac{1}{2}
V \cdot T
\cdot (Q_1+Q_1^T-R_1) +\frac{1}{2}
W \cdot T \cdot (Q_2+Q_2^T-R_2)\right]\cdot S^T\right\}\,,
\end{align*}
where
\begin{align*}
U:&= \begin{bmatrix}
\Theta & 0  & 0 & 0\\
0 & \Theta & 0  & 0\\
0 &  0 & \frac{\sqrt{1-\Theta^2}}{\sqrt{2}} & 0\\
0 & 0 & 0 & \frac{\sqrt{1-\Theta^2}}{\sqrt{2}}
\end{bmatrix}
\end{align*}
{\tiny
\begin{align*}
 V:&=\begin{bmatrix}
\sqrt{2(1-\Theta^2)}(E^{-1}_{11} +E^{-1}_{12}) & 0  & 0 & 0\\
0 & \sqrt{2(1-\Theta^2)}(E^{-1}_{11} +E^{-1}_{12}) & 0  & 0 \\
0 & 0 &  -\left((\Theta+1)E^{-1}_{11} +(\Theta-1) E^{-1}_{12}\right) & 0\\
0 & 0 & 0 & -\left((\Theta-1)E^{-1}_{11} +(\Theta+1) E^{-1}_{12}\right)
\end{bmatrix}\\
W:&=\begin{bmatrix}
\sqrt{2(1-\Theta^2)}(E^{-1}_{22} +E^{-1}_{12}) & 0  & 0 & 0\\
0 & \sqrt{2(1-\Theta^2)}(E^{-1}_{22} +E^{-1}_{12}) & 0  & 0\\
0 & 0 & -\left((\Theta+1)E^{-1}_{12} +(\Theta-1) E^{-1}_{22}\right) & 0 \\
0 & 0 & 0 & -\left((\Theta-1)E^{-1}_{12} +(\Theta+1) E^{-1}_{22}\right)
\end{bmatrix}
\end{align*}
}
\begin{align*}
    T:&= \begin{bmatrix}
\frac{\sqrt{1-\Theta^2}}{\sqrt{2}} & \frac{\Theta+1}{2}E^{-1}_{11} +\frac{\Theta-1}{2}E^{-1}_{12}  & \frac{\Theta+1}{2}E^{-1}_{12} +\frac{\Theta-1}{2}E^{-1}_{22}\\
\frac{\sqrt{1-\Theta^2}}{\sqrt{2}} & \frac{\Theta-1}{2}E^{-1}_{11} +\frac{\Theta+1}{2}E^{-1}_{12}  & \frac{\Theta-1}{2}E^{-1}_{12} +\frac{\Theta+1}{2}E^{-1}_{22}\\
-2\Theta & \sqrt{2(1-\Theta^2)}(E^{-1}_{11} +E^{-1}_{12}) & \sqrt{2(1-\Theta^2)}(E^{-1}_{12} +E^{-1}_{22})\\
-2\Theta & \sqrt{2(1-\Theta^2)}(E^{-1}_{11} +E^{-1}_{12}) & \sqrt{2(1-\Theta^2)}(E^{-1}_{12} +E^{-1}_{22})
\end{bmatrix}\\
S:&= \begin{bmatrix}
\frac{\sqrt{1-\Theta^2}}{\sqrt{2}} & \frac{\Theta+1}{2}E^{-1}_{11} +\frac{\Theta-1}{2}E^{-1}_{12}  & \frac{\Theta+1}{2}E^{-1}_{12} +\frac{\Theta-1}{2}E^{-1}_{22}\\
\frac{\sqrt{1-\Theta^2}}{\sqrt{2}} & \frac{\Theta-1}{2}E^{-1}_{11} +\frac{\Theta+1}{2}E^{-1}_{12}  & \frac{\Theta-1}{2}E^{-1}_{12} +\frac{\Theta+1}{2}E^{-1}_{22}\\
\frac{\sqrt{1-\Theta^2}}{\sqrt{2}} & \frac{\Theta+1}{2}E^{-1}_{11} +\frac{\Theta-1}{2}E^{-1}_{12}  & \frac{\Theta+1}{2}E^{-1}_{12} +\frac{\Theta-1}{2}E^{-1}_{22}\\
\frac{\sqrt{1-\Theta^2}}{\sqrt{2}} & \frac{\Theta-1}{2}E^{-1}_{11} +\frac{\Theta+1}{2}E^{-1}_{12}  & \frac{\Theta-1}{2}E^{-1}_{12} +\frac{\Theta+1}{2}E^{-1}_{22}
\end{bmatrix}\,.
\end{align*}

\subsection{The term $\frac{c}{2} L_{\vec{n}} g(\vec{\nu}, \vec{\nu})$} Using \eqref{L-n-g-1}, we have
\begin{align*}
& \frac{c}{2} L_{\vec{n}} g(\vec{\nu}, \vec{\nu}) = c\langle \vec{\nu}, \overline\nabla_{\vec{\nu}} \vec{n} \rangle \\
=& \frac{c}{2} \begin{bmatrix}
    - \frac{\sqrt{1-\Theta^2}}{\sqrt{2}}  & - \frac{\sqrt{1-\Theta^2}}{\sqrt{2}} 
\end{bmatrix} E^{-1}\begin{bmatrix}
\widetilde e_1 \\
\widetilde e_2
\end{bmatrix}2(\widetilde  e_1^*, \widetilde e_2^*)E\frac{\p E}{\p r}(\widetilde  e_1^*, \widetilde e_2^*)^T(\widetilde e_1, \widetilde e_2) E^{-1}\begin{bmatrix}
- \frac{\sqrt{1-\Theta^2}}{\sqrt{2}}  \\
- \frac{\sqrt{1-\Theta^2}}{\sqrt{2}} 
\end{bmatrix} \\
=& \frac{c (1-\Theta^2)}{2} (\alpha + 2 \beta + \eta)\\
=& \frac{c (1-\Theta^2)}{1+ (1-\lambda^2(0))\sinh^2(r_0) } \sinh(r_0)\cosh(r_0)(1-\lambda^2(0)) + \frac{bc (1-\Theta^2)}{1+ (1-\lambda^2(0))\sinh^2(r_0) } .
\end{align*}

\subsection{Evolution equation of $\Theta^2$ at the minimum point}
Combining all above, the evolution equation of $\Theta^2$ under the MMCF at the minimum point $(p,r_0)$ is: 
 \begin{align}\label{eq:theta-evolution-modifed-mcf}
&\left(\frac{\partial }{\partial t}  - \Delta\right)\Theta^2- 2|\nabla \Theta|^2 = \left(\frac{\partial }{\partial t}  - \Delta\right)\Theta^2 \\
=& 2(|A|^2-2)\Theta^2+(\overline\nabla_{\vec{\nu}} L_{\vec{n}} g)(e_i, e_i)\Theta \notag\\
&-2(\overline\nabla_{e_i}L_{\vec{n}} g)(\vec{\nu}, e_i)\Theta -2(L_{\vec{n}} g)(e_i, e_j)A_{ij} \Theta +2c\langle \vec{\nu}, \overline\nabla_{\vec{\nu}} \vec{n} \rangle \Theta \notag\\
=& 2\Theta^2\left( H - \frac{-2b(1-\Theta^2)+(1-\lambda^2(0))(1+\Theta^2)\sinh(2r_0)}{\Theta(1+\lambda^2(0) + (1-\lambda^2(0))\cosh(2r_0))}\right)^2 \notag\\
&+\frac{1}{2(1+\lambda^2(0) + (1-\lambda^2(0))\cosh(2r_0))^3}\left(\Gamma_1(1-\Theta^4) +\Gamma_2\Theta^2(1-\Theta^2)\right.\notag\\
& \left.+\Gamma_3\Theta(1-\Theta^2)\sqrt{2(1-\Theta^2)} +\Gamma_4 \Theta^3\sqrt{2(1-\Theta^2)}\right)+\Gamma_5 \Theta (1-\Theta^2)\notag\,,
\end{align}
where (note: $\cosh(4 r) =  8\cosh^2(r)\sinh^2(r) + 1$)
\begin{align*}
\Gamma_1 :=& -4 (1+\lambda^2(0) + (1-\lambda^2(0))\cosh(2r_0))\left((1-\lambda^2(0)) \sinh(2 r_0) -2 b\right)^2,\\
 \Gamma_2 :=& \,4 (1-\lambda^2(0))^2 (1+\lambda^2(0)) \cosh(
     4 r_0) + 16 (1-\lambda^2(0)) ((1-\lambda^2(0))^2+2b^2) \cosh(
     2 r_0)  \\
     &+ 4(1+\lambda^2(0)) (3 a^4 + 3 (-1 + b^2)^2 + 
      2 a^2 (-7 + 3 b^2) - 12 b (1-\lambda^2(0)) \sinh(2 r_0))\\
      &- 
   24 b (1-\lambda^2(0))^2 \sinh(4 r_0),
 \\
 \Gamma_3 :=& -4 ((-3 + a^2 + 2 a b - b^2) m + (3 - a^2 + 2 a b + b^2) n) \cosh(r_0) + 
 4 ((1 + a^2 + 2 a b - b^2) m \\
 &+ (-1 - a^2 + 2 a b + b^2) n) \cosh(
   3 r_0) - 8 (3 a^2 b (m - n) + b (1 + b^2) (-m + n) - a^3 (m + n) \\
   &+ 
    a (1 + 3 b^2) (m + n) + (b (-1 + b^2) (m - n) + 3 a^2 b (-m + n) +
        a^3 (m + n) \\
        &- a (-1 + 3 b^2) (m + n)) \cosh(2 r_0)) \sinh(r_0),\\
\Gamma_4 :=& \,32( (a^2 (m - n) + b^2 (-m + n) + 2 a b (m + n)) \sinh(r_0)\\
& -(b (-m + n) + a (m + n)) \cosh(r_0)) \sinh(2 r_0), \\
  \Gamma_5:= &\frac{2c  \sinh(r_0)\cosh(r_0)(1-\lambda^2(0)) + 2bc}{1+ (1-\lambda^2(0))\sinh^2(r_0) }  = \frac{4c  \sinh(r_0)\cosh(r_0)(1-\lambda^2(0)) + 4bc}{1+\lambda^2(0) + (1-\lambda^2(0))\cosh(2r_0)}
  \end{align*}
and
$$
m= \partial_1 \tilde{A}_{11}(0) = - \partial_1 \tilde{A}_{22}(0), \quad n= \partial_1 \tilde{A}_{12}(0)= \partial_2 \tilde{A}_{11}(0).
$$

\section{Height estimates}\label{height-est}
In this section, we prove a technical height estimate for the evolving surfaces which will be crucial for the proof of the main theorem. Such an estimate ensures the MMCF stays in a compact region under appropriate conditions as long as it exists.

First we derive a general equation for $\Delta u$ in an {\afm}:
\begin{lem}\label{equation-u}
Let $S \subset M^3$ be a closed surface that is a geodesic graph over $\Sigma$, and $u(x)$ is the signed distance of $x \in S$ to $\Sigma$. Then we have:
\begin{equation}\label{eq-u1}
\Delta u = \frac{(1-\lambda^2(x))(\Theta^2+1)\cosh(u)\sinh(u) + b(\Theta^2-1)}{1+(1-\lambda^2(x))\sinh^2(u)} - H\Theta\,,
\end{equation}
where $\Delta$ is the Laplace operator on $S$ with respect to the induced metric and $\pm\lambda(x)$ are the principal curvatures of $\Sigma$ at $x$.
\end{lem}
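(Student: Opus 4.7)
The plan is to invoke the classical formula expressing the intrinsic Laplacian on a hypersurface in terms of ambient data. For any smooth function $f$ on $M^3$, a direct calculation using $\nabla^{S}_{e_i} e_i = \overline\nabla_{e_i} e_i + A_{ii}\vec\nu$ (consistent with the paper's sign convention $A_{ij} = \langle \overline\nabla_{e_i}\vec\nu, e_j\rangle$) yields
\begin{equation*}
\Delta_S f = \overline\Delta_M f - \overline\nabla^2 f(\vec\nu, \vec\nu) - H\,\langle \overline\nabla f, \vec\nu\rangle.
\end{equation*}
Applied with $f = u$, and using $\overline\nabla u = \vec{n}$ (so $\langle \overline\nabla u, \vec\nu\rangle = \Theta$), the problem reduces to computing the two ambient quantities $\overline\Delta_M u$ and $\overline\nabla^2 u(\vec\nu,\vec\nu)$ in the coordinates of the {\afm}.

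For $\overline\Delta_M u$: since $\vec{n}$ is the unit geodesic vector field emanating from $\Sigma$ with $\overline\nabla_{\vec{n}}\vec{n} = 0$, one has $\overline\Delta_M u = \mathrm{div}(\vec{n})$, which equals the mean curvature of the equidistant surface $\Sigma(u)$ at $x$. By Remark~\ref{principle-curv-equidistant} (equivalently, the trace of the matrix $F$ in \eqref{matrixF} evaluated at $r=u$),
\begin{equation*}
\overline\Delta_M u = \frac{2(1-\lambda^2(x))\sinh(u)\cosh(u)}{1+(1-\lambda^2(x))\sinh^2(u)}.
\end{equation*}

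For the Hessian term, I would decompose $\vec\nu = \Theta\,\vec{n} + \vec{T}$ with $\vec{T}$ tangent to $\Sigma(u)$ and $|\vec{T}|^2 = 1-\Theta^2$. Using $\overline\nabla^2 u(X,Y) = \langle \overline\nabla_X \vec{n}, Y\rangle$ together with $\overline\nabla_{\vec{n}}\vec{n} = 0$ and $\langle \overline\nabla_{\vec{T}}\vec{n}, \vec{n}\rangle = 0$, the Hessian collapses to the second fundamental form $A_{\Sigma(u)}(\vec{T}, \vec{T})$ of the equidistant surface. Choosing the frame at $(x,0)\in\Sigma$ as in the beginning of Section~\ref{angle-estimate-1}, so that $\vec{T} = -\sqrt{(1-\Theta^2)/2}\,(\widehat e_1 + \widehat e_2)$, and reading off the entries $A_{\Sigma(u)}(\widehat e_i,\widehat e_j)$ in the orthonormal $\widehat e$-basis from \eqref{nabla_e_n}--\eqref{matrixF}, gives
\begin{equation*}
A_{\Sigma(u)}(\vec{T}, \vec{T}) = \tfrac{1-\Theta^2}{2}(\alpha + 2\beta + \eta) = \frac{(1-\Theta^2)\bigl((1-\lambda^2(x))\sinh(u)\cosh(u) + b\bigr)}{1+(1-\lambda^2(x))\sinh^2(u)}.
\end{equation*}

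Substituting into the reduction formula and simplifying, the $(1-\lambda^2)\sinh u\cosh u$ terms combine with net coefficient $2-(1-\Theta^2) = 1+\Theta^2$, while the $b$-terms combine with coefficient $-(1-\Theta^2) = \Theta^2-1$, producing the identity stated in the lemma. The main bookkeeping challenge is the Hessian step: one must keep $\vec{T}$ and $A_{\Sigma(u)}$ in a consistent frame and invoke Remark~\ref{remark1} (commutativity of $E$ and $\partial_r E$) so that the $E^{-1}$ factors collapse cleanly into the scalar combination $\alpha+2\beta+\eta$; otherwise the appearance of the off-diagonal entry $b$ of $A_\Sigma$ can easily be lost or double-counted.
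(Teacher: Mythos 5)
Your proposal is correct: the sign conventions check out, the reduction formula $\Delta_S u = \overline\Delta_M u - \overline\nabla^2 u(\vec\nu,\vec\nu) - H\Theta$ is valid with the paper's convention $h_{ij}=\langle\overline\nabla_{e_i}\vec\nu,e_j\rangle$, and your evaluations $\overline\Delta_M u = \alpha+\eta$ (trace of $F$ at $r=u$) and $\overline\nabla^2 u(\vec\nu,\vec\nu)=\frac{1-\Theta^2}{2}(\alpha+2\beta+\eta)$ reproduce \eqref{eq-u1} exactly. The paper's own proof is the complementary organization of the same computation: it works directly in the tangent frame $\{e_1,e_2\}$ of $S$, writing $\Delta u = \frac{1}{2}\sum_i L_{\vec n}g(e_i,e_i) - H\Theta$ and then reading off the trace from the already-computed expression \eqref{L-n-g-e-i-1} together with \eqref{matrixF}. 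Since $\{e_1,e_2,\vec\nu\}$ is orthonormal, your "full ambient trace minus the $(\vec\nu,\vec\nu)$ component" is algebraically identical to the paper's "trace over the tangent plane of $S$"; both hinge on the frame adapted to the $\vec n$--$\vec\nu$ plane (so that $\vec T\propto\widehat e_1+\widehat e_2$) and on the commutativity of Remark \ref{remark1} to identify $\langle\overline\nabla_{\widehat e_i}\vec n,\widehat e_j\rangle$ with $F_{ij}$. What your route buys is independence from the long matrix identity \eqref{L-n-g-e-i-1}: you only need $\operatorname{tr}F$ and $F(\vec T,\vec T)$, and the divergence interpretation $\overline\Delta_M u=\div\vec n$ makes the geometric meaning (mean curvature of the equidistant leaf) transparent; what the paper's route buys is economy within its own development, since \eqref{L-n-g-e-i-1} was already established for the angle evolution equation and the lemma follows by taking its trace. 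Your closing caution about keeping $\vec T$ and $A_{\Sigma(u)}$ in the same orthonormal $\widehat e$-frame is exactly the point where the off-diagonal entry $b$ enters, and you handle it correctly.
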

\begin{proof}
For any point $x \in S$, choose $\{\vec{\nu}, e_1, e_2\}$ as in \eqref{coordinate-change-1} to be a local orthonormal frame of $S$ at $x$. Then at $x$ we can compute
\begin{align*}
  \Delta u  &= \sum_{i=1}^2 \nabla_{e_{i}}\nabla_{e_{i}}u = \sum_{i=1}^2  \nabla_{e_i} \langle \vec{n}, e_i\rangle \notag\\
&= \sum_{i=1}^2  (\langle \overline{\nabla}_{{e}_i} \vec{n}, e_i\rangle +  \langle\vec{n}, \overline{\nabla}_{{e}_i} e_i\rangle)\notag\\
&= \frac{1}{2}\sum_{i=1}^2 L_{\vec{n}} g(e_i, e_i) - H\Theta\\
& = \frac{1}{2}(\Theta^2+1)(\alpha+\eta)+ (\Theta^2-1)\beta- H\Theta\\
& = \frac{(1-\lambda^2(x))(\Theta^2+1)\cosh(u)\sinh(u) + b(\Theta^2-1)}{1+(1-\lambda^2(x))\sinh^2(u)} - H\Theta\,,
\end{align*}
where we have used \eqref{L-n-g-e-i-1} and \eqref{matrixF}.
\end{proof}

\begin{remark}\label{r}
Combining with 
\begin{equation}
    u_t  = - (H-c) \Theta\,,
\end{equation} we have the evolution equation for the hyperbolic distance function $u$ of $S(t)$ along the MMCF:
\begin{align}\label{eq-r}
&\left(\frac{\partial }{\partial t} -\Delta\right)u(x,t) \notag\\
= &- \frac{(1-\lambda^2(x))(\Theta^2+1)\cosh(u)\sinh(u) + b(\Theta^2-1)}{1+(1-\lambda^2(x))\sinh^2(u)}+ c\Theta\,.
\end{align}
\end{remark}

Now we define
$$\mathbf{w}(t):= \min_{(x,t)\in S(t)} u(x,t)$$
and 
$$\mathbf{v}(t):= \max_{(x,t)\in S(t)} u(x,t) $$
to be the minimum and maximum hyperbolic signed distances to $\Sigma$ of the evolving surface $S(t)$ respectively.

\bt[\bf Height estimate for MMCF]\label{height-1}
Let $M^3$ be an almost Fuchsian manifold and $\Sigma = \Sigma(0)$ be the unique closed {\ms} in $M^3$. Assume that
$$\lambda_{max} = \max_{x\in \Sigma} \lambda(x,0) 
\leq \frac{\sqrt{2-c}}{2},$$
and the minimum and maximum hyperbolic signed distances of the initial smooth closed surface $S(0)=S_0 \subset M^3$ to $\Sigma$ are $\mathbf{w}(0) = b_0 >0$ and $\mathbf{v}(0)=a_0>0$ respectively, then under the MMCF \eqref{mmcf} with $c\in [0,2)$ we have for any $t \in [T_1, T_2] \subset [0,T]$ (for $T>0$ as long as the flow exists):
    \begin{align}\label{expdecay2}
\sinh^{-1}\left(e^{-\tau_1 (t-T_1) }\sinh(\mathbf{w}(T_1) -r_c)\right) &+  r_c  \leq u(\cdot,t)  \notag\\
& \leq   \sinh^{-1}\left(e^{-\tau_2 (t-T_1)}\sinh( \mathbf{v}(T_1) - \widetilde r_c )\right) +  \widetilde r_c\,,
\end{align}
where
\begin{equation}
    \tau_1 = \left\{
\begin{aligned}
& 4 \quad \text{if }\quad \mathbf{w}(T_1) \geq r_c\\
&\frac{2-c}{4} \quad \text{if }\quad \mathbf{w}(T_1) \leq r_c
\end{aligned}
\right.
\end{equation}
and
\begin{equation}
    \tau_2 = \left\{
\begin{aligned}
& 4 \quad \text{if }\quad \mathbf{v}(T_1) \leq \widetilde r_c\\
&\frac{2-c}{4} \quad \text{if }\quad \mathbf{w}(T_1) \geq \widetilde r_c.
\end{aligned}
\right.
\end{equation}
Here we designate $r_c:= \tanh^{-1} \left(\frac{c}{2}\right)$, $\widetilde r_c := \tanh^{-1} \left(\frac{c}{2 - (2-c\tanh(\widetilde b_0))\lambda^2_{max}}\right)$ and $\widetilde b_0= \min\{ b_0, r_c\}$. In particular, 
\begin{equation}\label{lower-upper-bound-1}
  \widetilde b_0 \leq  u(x,t) \leq \max\{a_0, \widetilde r_c\}
\end{equation}
for all $t \in [0,T]$ and the flow stays in a compact region.
\et

\bp
Recall that the height function $u(x,t) := \pm\text{dist}((x,t),\Sigma)$ is the signed hyperbolic distance of a point $(x,t)$ on the evolving surface $S(t)$ to the reference {\ms} $\Sigma$. Moreover, under the MMCF it satisfies the evolution equation \eqref{eq-r}.

Now suppose $u(q,t) = \mathbf{w}(t)$. Then at $(q,t)$ we have $\Theta(q,t)=1$ and we can use the Hamilton's trick to get 
\begin{align}
    \mathbf{w}'(t) &= \frac{\partial u}{\partial t}(q,t) \geq \left(\frac{\partial }{\partial t} -\Delta\right) u(q,t) \\
    & = - \frac{2(1-\lambda^2(q,0))\cosh(u(q,t))\sinh(u(q,t))}{1+(1-\lambda^2(q,0))\sinh^2(u(q,t))} + c \notag\\
     & = - \frac{2(1-\lambda^2(q,0))\cosh(\mathbf{w}(t) )\sinh(\mathbf{w}(t) )}{\cosh^2(\mathbf{w}(t)) -\lambda^2(q,0)\sinh^2(\mathbf{w}(t) )}+ c \notag\\
     & = - \frac{2(1-\lambda^2(q,0))\tanh(\mathbf{w}(t) )}{1-\lambda^2(q,0) \tanh^2(\mathbf{w}(t))}+ c \notag\\
     = & - \frac{2(1-\lambda^2(q,0))\tanh(\mathbf{w}(t) ) + c\lambda^2(q,0) \tanh^2(\mathbf{w}(t)) -c }{1-\lambda^2(q,0) \tanh^2(\mathbf{w}(t))}.\notag
\end{align}
Note that 
$$
\mathbf{w}(0) = b_0 > 0\,,
$$
so we can assume that $\mathbf{w}(t)\geq 0$ on $[0,T]$ for some $T>0$ (as long as the flow exists). Using that $r_c = \tanh^{-1}\left(\frac{c}{2}\right)$, on $[0,T]$ we have:
\begin{align*}
(\mathbf{w}(t) - r_c)' & = \mathbf{w}' (t)  \\ 
\geq & - \frac{2(1-\lambda^2(q,0))\tanh(\mathbf{w}(t) ) + c\lambda^2(q,0) \tanh(\mathbf{w}(t)) -c }{1-\lambda^2(q,0) \tanh^2(\mathbf{w}(t))}\\
     = & - \frac{\left(2 - (2-c)\lambda^2(q,0)\right)\left(\tanh(\mathbf{w}(t) ) - \frac{c}{2 - (2-c)\lambda^2(q,0)}\right) }{1-\lambda^2(q,0) \tanh^2(\mathbf{w}(t))}.
\end{align*}
There are two cases that we need to consider. Firstly if $\mathbf{w}(t) \geq r_c$ for $t\in [T_1, T_2] \subset [0,T]$, then on $[T_1, T_2]$ we have:
\begin{align*}
(\mathbf{w}(t) - r_c)' 
\geq & - \frac{2}{1-\lambda_{max}^2}\left(\tanh(\mathbf{w}(t) ) - \frac{c}{2 - (2-c)\lambda^2(q,0)}\right)\\
     \geq & - \frac{2}{1-\lambda_{max}^2}\left(\tanh(\mathbf{w}(t) ) - \tanh(r_c)\right)\\
  = & -\frac{2}{1-\lambda^2_{max}} \tanh(\mathbf{w}(t) - r_c )(1- \tanh(\mathbf{w}(t))\tanh(r_c ) )\\
  \geq & -4 \tanh(\mathbf{w}(t) - r_c ),
\end{align*}
where we used that $\lambda_{max}^2 \le \frac{2-c}{4} \le \frac12$. Therefore 
\begin{align*}
(\mathbf{w}(t) - r_c)' & \geq -4 \tanh(\mathbf{w}(t) - r_c )
\end{align*}
and thus on $[T_1,T_2]$
  \begin{equation}
    \left(e^{4t}\sinh (\mathbf{w}(t) - r_c )\right)'\geq 0.
\end{equation}
Then we have on $[T_1,T_2]$
\begin{align*}
    u(x,t) \geq \mathbf{w}(t) &\geq \sinh^{-1}\left(e^{-4(t-T_1) }\sinh(\mathbf{w}(T_1) -r_c)\right) + r_c.
\end{align*}
This in fact yields that 
\begin{equation}
    u(x,t)\geq \mathbf{w}(t) \geq r_c,
\end{equation}
for all $t\geq T_1$ as long as the flow exists.

For the other case, if $\mathbf{w}(t) \leq r_c$ for $t\in [T_1, T_2] \subset [0,T]$, then we use again that $\lambda_{max}^2 \leq \frac{2-c}{4}\leq \frac12$ on $[T_1, T_2]$ to estimate:
\begin{align*}
(\mathbf{w}(t) - r_c)' 
\geq & - \frac{2-(2-c)\lambda_{max}^2}{2}\left(\tanh(\mathbf{w}(t) ) - \frac{c}{2 - (2-c)\lambda^2(q,0)}\right)\\
     \geq & -\frac{2-(2-c)\lambda_{max}^2}{2}\left(\tanh(\mathbf{w}(t) ) - \tanh(r_c)\right)\\
  \geq & -\frac{1}{2}\tanh(\mathbf{w}(t) - r_c )(1- \tanh(\mathbf{w}(t))\tanh(r_c ) )\\
  \geq & -\frac{1}{2}\tanh(\mathbf{w}(t) - r_c )(1-\tanh(r_c ) )\\
  = & -\frac{2-c}{4} \tanh(\mathbf{w}(t) - r_c ).
\end{align*}
Then similarly on $[T_1,T_2]$,
\begin{align*}
    u(x,t) \geq \mathbf{w}(t) 
    &\geq \sinh^{-1}\left(e^{-\frac{2-c}{4} (t-T_1) }\sinh(\mathbf{w}(T_1) -r_c)\right) + r_c\\
    =& r_c - \sinh^{-1}\left(e^{-\frac{2-c}{4} (t-T_1) }\sinh( r_c -\mathbf{w}(T_1))\right) \geq  \mathbf{w}(T_1).
\end{align*}
In particular, combining the above two cases we have
\begin{equation}\label{keeplower-1}
    u(x,t)\geq \mathbf{w}(t)  \geq \min\{b_0, r_c\} := \widetilde b_0 \geq 0,
\end{equation}
for all $t\in [0,T]$.

Now we estimate the upper bound for $u(p,t)$. Suppose $u(p,t) = \mathbf{v}(t)$. Note that we have \eqref{keeplower-1} as long as the flow exists and also $\Theta = 1$ at the maximum point. Then similarly we can estimate:
\begin{align*}
    \mathbf{v}'(t) = & \frac{\partial u}{\partial t}(p,t) \leq \left(\frac{\partial }{\partial t} -\Delta\right) u(p,t)\\
     = & - \frac{2(1-\lambda^2(p,0))\tanh(\mathbf{v}(t) )}{1-\lambda^2(p,0) \tanh^2(\mathbf{v}(t))}+ c\\
     \leq & - \frac{2(1-\lambda^2(p,0))\tanh(\mathbf{v}(t) ) + c\lambda^2(p,0) \tanh(\widetilde b_0)\tanh(\mathbf{v}(t)) -c }{1-\lambda^2(p,0) \tanh^2(\mathbf{v}(t))}\\
     = & - \frac{\left(2 - (2-c\tanh(\widetilde b_0))\lambda^2(p,0)\right)\left(\tanh(\mathbf{v}(t) ) - \frac{c}{2 - (2-c\tanh(\widetilde b_0))\lambda^2(p,0)}\right) }{1-\lambda^2(p,0) \tanh^2(\mathbf{v}(t))}.
\end{align*}

We also work in two cases. Firstly, for $\widetilde r_c := \tanh^{-1} \left(\frac{c}{2 - (2-c\tanh(\widetilde b_0))\lambda^2_{max}}\right)$, if $\mathbf{v}(t) \geq \widetilde r_c$ for $t\in [T_1, T_2] \subset [0,T]$, then on $[T_1, T_2]$ we have
\begin{align*}
\mathbf{v}'(t) \leq &  -\left(2-(2-c\tanh(\widetilde b_0))\lambda^2_{max}\right)\left(\tanh(\mathbf{v}(t) ) -  \tanh(\widetilde r_c)\right)\\
     \leq &- \left(2 - (2-c\tanh(\widetilde b_0))\lambda^2_{max}\right) \left( 1-  \tanh(\widetilde r_c)\right) \left(\tanh(\mathbf{v}(t)-\widetilde r_c )\right)\\
     =& -(2-c -(2-c\tanh(\widetilde b_0))\lambda^2_{max} )\left(\tanh(\mathbf{v}(t)-\widetilde r_c )\right)\\
     \leq & -(2-c -2\lambda_{max}^2 )\tanh(\mathbf{v}(t)-\widetilde r_c ) \\
     \leq & -\frac{2-c}{4}\tanh(\mathbf{v}(t)-\widetilde r_c ),
\end{align*}
where we used $\lambda_{max}^2 \le \frac{2-c}{4}$. Therefore
\begin{align*}
(\mathbf{v}(t) - \widetilde r_c)' & \leq -\frac{2-c}{4} \tanh(\mathbf{v}(t) - \widetilde r_c )
\end{align*}
and thus on $[T_1,T_2]$
  \begin{equation}
    \left(e^{\frac{2-c}{4} t}\sinh (\mathbf{v}(t) - \widetilde r_c )\right)'\leq 0.
\end{equation}
Then we have on $[T_1,T_2]$
\begin{align*}
    u(x,t) \leq \mathbf{v}(t) &\leq \sinh^{-1}\left(e^{-\frac{2-c}{4}(t-T_1) }\sinh(\mathbf{v}(T_1) - \widetilde r_c)\right) + \widetilde r_c.
\end{align*}

For the other case, if $\mathbf{v}(t) \leq \widetilde r_c$ for $t\in [T_1, T_2] \subset [0,T]$, then on $[T_1, T_2]$ we have:
\begin{align*}
    \mathbf{v}'(t) \leq &  - \frac{2 - (2-c\tanh(\widetilde b_0))\lambda^2_{max}}{1-\lambda_{max}^2}\left(\tanh(\mathbf{v}(t) ) -  \tanh(\widetilde r_c)\right)\\
     \leq &- \frac{2 - (2-c\tanh(\widetilde b_0))\lambda^2_{max}}{1-\lambda_{max}^2}\left( 1- \tanh(\widetilde r_c)\right) \left(\tanh(\mathbf{v}(t)-\widetilde r_c )\right)\\
          = &- \frac{2 - (2-c\tanh(\widetilde b_0))\lambda^2_{max}-c}{1-\lambda_{max}^2}\left(\tanh(\mathbf{v}(t)-\widetilde r_c )\right)\\
     \leq &- \frac{2}{1-\lambda_{max}^2} \tanh(\mathbf{v}(t)-\widetilde r_c )\\
     \leq &- 4 \left(\tanh(\mathbf{v}(t)-\widetilde r_c )\right),
\end{align*}
where we used $\lambda_{max}^2 \leq \frac12$ and $(2-c\tanh(\widetilde b_0))\lambda^2_{max} \ge 0$.
Here we have also used the facts that
$$
\frac{2-(2-c\tanh(\widetilde b_0))\lambda^2}{1-\lambda^2}
$$
is non-decreasing with respect to $\lambda>0$ and when $\lambda_{max}\le \frac{\sqrt{2-c}}{2}$ we have
\begin{equation}
    \frac{c}{2 - (2-c\tanh(\widetilde b_0))\lambda^2_{max}} <1.
\end{equation}
Thus 
 \begin{equation}
    \left(e^{4 t}\sinh (\mathbf{v}(t) - \widetilde r_c )\right)'\leq 0
\end{equation}
and therefore on $[T_1, T_2]$ we have
\begin{align*}
    u(x,t)\leq \mathbf{v}(t) &\leq \sinh^{-1}\left(e^{-4(t-T_1)}\sinh(\mathbf{v}(T_1)- \widetilde r_c)\right) + \widetilde r_c\\
    &= \widetilde r_c - \sinh^{-1}\left(e^{-4(t-T_1)}\sinh(\widetilde r_c - \mathbf{v}(T_1))\right)\\
    &\leq \widetilde r_c.
\end{align*}
This also yields that if $\mathbf{v}(0) = a_0\leq \widetilde r_c$ then $\mathbf{v}(t) \leq \widetilde r_c$ for all $t>0$ as long as the flow exists. This completes the proof.
\end{proof}
Similar to the mean convex MCF in Euclidean space, we prove a monotone property for the MMCF in our setting:
\begin{lem} \label{mean-convex-1} Let $M^3$ be an {\afm} and $\Sigma = \Sigma(0)$ be the unique closed {\ms} in $M^3$. Assume that $H_{min}(0) = \min_{S_0} H \geq c$, then under the MMCF \eqref{mmcf} with $c\in [0,2)$, the maximum height $\mathbf{v}(t)$ of the evolving surface is non-increasing for all $t >0$ as long as the flow exists.
\end{lem}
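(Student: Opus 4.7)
The plan is to combine a parabolic maximum principle for the mean curvature along the MMCF with Hamilton's trick applied at a maximum point of the height function $u$.

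First, a standard computation parallel to \cite{Hui86}, adapted to the velocity $-(H-c)\vec{\nu}$ with $c$ constant, yields the evolution equation
\begin{equation*}
\left(\frac{\partial}{\partial t} - \Delta\right) H = \bigl(|A|^2 + \overline{\mathrm{Ric}}(\vec{\nu},\vec{\nu})\bigr)(H - c).
\end{equation*}
Since $M^3$ has constant sectional curvature $-1$, the ambient Ricci curvature in the normal direction equals $\overline{\mathrm{Ric}}(\vec{\nu},\vec{\nu}) = -2$, so setting $w := H - c$ we obtain the linear parabolic equation $(\partial_t - \Delta)\, w = (|A|^2 - 2)\,w$. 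Given the initial hypothesis $w(\cdot,0) \geq 0$, the parabolic maximum principle, applied via the perturbation $w + \epsilon\, e^{Kt}$ on any compact subinterval of existence (with $K$ chosen larger than $\sup(|A|^2 - 2)$ there) and then sending $\epsilon \to 0$, shows that $w \geq 0$ is preserved, i.e.\ $H(\cdot, t) \geq c$ for all $t \geq 0$ while the flow remains smooth.

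The conclusion then follows from Hamilton's trick. At a point $p = p(t)$ where $u(\cdot, t)$ attains its maximum $\mathbf{v}(t)$ on $S(t)$, the tangential gradient of $u$ on $S(t)$ vanishes, so the ambient gradient $\vec{n}$ of $u$ is parallel to $\vec{\nu}$; with the orientation convention consistent with Lemma \ref{u-t-eq}, this forces $\Theta(p,t) = 1$. Combining Lemma \ref{u-t-eq} with the bound $H \geq c$ established above,
\begin{equation*}
\frac{d^{+}}{dt}\mathbf{v}(t) \leq u_t(p,t) = -(H(p,t) - c)\,\Theta(p,t) = -(H(p,t) - c) \leq 0,
\end{equation*}
so $\mathbf{v}$ is non-increasing. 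The main obstacle is the preservation of $H \geq c$: the reaction coefficient $|A|^2 - 2$ is not signed, so monotonicity of $\min(H - c)$ is not automatic. The standard perturbation trick resolves this, using only that $|A|$ is bounded on any compact time-subinterval of smooth existence of the flow, which is precisely what the hypothesis ``as long as the flow exists'' provides.
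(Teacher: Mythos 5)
Your proposal is correct and follows essentially the same route as the paper: both establish that $H\geq c$ is preserved using the evolution equation $(\partial_t-\Delta)(H-c)=(|A|^2-2)(H-c)$ together with a scalar maximum principle, and then apply Hamilton's trick at a maximum point of $u$, where $\Theta=1$, to conclude $\mathbf{v}'(t)\leq -(H-c)\leq 0$. The only inessential difference is how the unsigned reaction coefficient is handled: the paper substitutes $f=e^{2t}(H-c)$, which makes the reaction term $|A|^2 f$ and in addition yields the quantitative bound $H(\cdot,t)-c\geq e^{-2t}(H_{\min}(0)-c)$, whereas you use the standard $\epsilon\,e^{Kt}$ perturbation with $|A|$ bounded on compact time subintervals of smooth existence.
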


\begin{proof} We first show that $H>c$ as long as the flow exists. Consider the function 
$f( \cdot,t)$ on the evolving surface $S(t)$: 
\begin{equation}
f(\cdot,t) = e^{2t}(H(\cdot) - c),
\end{equation}
with $f(\cdot,0) \geq 0$ and we let $f_{min}(t) = \min_{S(t)} f(\cdot, t)$. We calculate the evolution equation for $f$, using also the evolution equation of $H$:
\begin{align*}
f_t - \Delta f &= 2f - e^{2t}(H_t-\Delta H)\\
&= 2f-e^{2t}(H-c)(|A|^2-2)\\
&=|A|^2 f.
\end{align*}
Now it follows from Hamilton's trick that, assuming $f(p,t) = f_{min}(t)\geq 0$ on $[0,T]$ for some $T>0$ as long as the flow exists, we have
\begin{align*}
    \frac{d}{dt} f_{min} = \frac{\partial }{\partial t} f(p,t) \geq \left(\frac{\partial }{\partial t} - \Delta\right) f (p,t) = |A|^2 f(p,t) \geq 0
\end{align*}
and thus for any $t\in [0,T]$
\begin{align}\label{H-and-c-1}
     H(\cdot, t) - c \geq  e^{-2 t}(H_{min}(0) -c )\geq 0\,.
\end{align}

Recall that $$\mathbf{v}(t):= \max_{(x,t)\in S(t)} u(x,t) $$ and suppose $u(p,t) = \mathbf{v}(t)$. We can use the fact that $\Theta(p,t)=1$
\begin{align}
    \mathbf{v}'(t) & = \frac{\partial u(p,t)}{\partial t} = c-H(p,t) \leq e^{-2t}(c-H_{min}(0))  \leq  0.
\end{align}
Therefore, as long as the flow exists $\mathbf{v}(t)$ is non-increasing and we have
\begin{equation}\label{height-upper-1}
    \mathbf{v}(t)\leq \mathbf{v}(0) .
\end{equation}
\end{proof}
\begin{remark}\label{r-slice-1}
By Remark \ref{mean-curvature-99}, the mean curvature of the equidistant surface $\Sigma(r)$ at $(p,r)$ is equal to
$$
H(p,r) =  \frac{2(1-\lambda^2(p,0)) \tanh(r)}{1- \lambda^2(p,0) \tanh^2(r)} 
$$
with
$$
\lim_{r\to +\infty} H(p,r) = 2, \quad \forall \,p \in \Sigma.
$$
Therefore, we can always find a such initial surface $S_0$ as in the Lemma \ref{mean-convex-1}. In fact, when
\begin{equation}
r \geq \tanh^{-1}\left(\frac{c}{1- \lambda_{max}^2 + \sqrt{(1-\lambda_{max}^2)^2 + c^2\lambda_{max}^2}}\right):= \hat{r}_c
\end{equation}
then the mean curvature of $\Sigma(r)$ is at least $c\in [0,2)$. Moreover, suppose $b_0 \geq r_c = \tanh^{-1}\left(\frac{c}{2}\right)$ in Theorem \ref{height-1} then $\widetilde b_0 = r_c$ in \eqref{keeplower-1} and we have
\begin{align}
   & \tanh(\hat{r}_c) - \tanh(\widetilde r_c)\notag \\
   = &  \frac{c}{1- \lambda_{max}^2 + \sqrt{(1-\lambda_{max}^2)^2 + c^2\lambda_{max}^2}}- \frac{c}{2 - (2-c\tanh(\widetilde b_0))\lambda^2_{max}}\notag\\
    = & \frac{c}{1- \lambda_{max}^2 + \sqrt{(1-\lambda_{max}^2)^2 + c^2\lambda_{max}^2}}- \frac{c}{2 - (2-\frac{c^2}{2})\lambda^2_{max}} \leq  0\notag
\end{align}
for all $c\in [0,2)$ and all $\lambda_{max}\in [0,1)$.

We also have
\begin{align*}
   & \tanh(\hat{r}_c) - \tanh(r_c)\notag \\
   = &  \frac{c}{1- \lambda_{max}^2 + \sqrt{(1-\lambda_{max}^2)^2 + c^2\lambda_{max}^2}}- \frac{c}{2} \geq  0
\end{align*}
for all $c \in [0,2)$ and all $\lambda_{max}\in [0,1)$.
\end{remark}

\section{Proofs of main results}\label{proof-of-main}
In this section, we assemble our previous estimates and prove main results of this paper, namely Theorem ~\ref{main} (which is detailed analysis of the long term behavior of the MMCF in a subclass of {\afm}s), and Theorem \ref{main-thm-2} (which confirms the Conjecture \ref{Thurston-Conj} for this subclass of {\afm}s). 
\subsection{Further estimates for $\Gamma_i$'s} 
From the explicit expressions for $\Gamma_i$'s in \eqref{eq:theta-evolution-modifed-mcf}, if $|A_{\Sigma}|_{C^1}$ is sufficiently small on $\Sigma$, namely, $\lambda_0, |a|,|b|,|m|,|n| \in [0,\epsilon]$ for some $\epsilon \in (0,1)$ sufficiently small to be determined later, we have the upper bound estimates for $|\Gamma_i|$'s:
\be\label{gamma-upper-1}
|\Gamma_1| \leq  4(\cosh(2r_0)+2)(\sinh(2r_0)+2)^2 
\leq 108\cosh^3(2r_0),
\ene
and 
\begin{align}
|\Gamma_2| &\leq 4\left( 12 \cosh(2r_0) + 2 \cosh(4r_0) + 40 + 24\sinh(2r_0)\ + 6\sinh(4r_0)\right) \notag\\
&\leq 368\cosh^2(2r_0), \label{gamma-upper-2}
\end{align}
and 
\begin{align}
|\Gamma_3| & \leq 40 \epsilon \cosh(r_0)+ 40 \epsilon \cosh(3r_0) + (160\epsilon+ 96\epsilon \cosh(2r_0))\sinh(r_0) \notag\\
&\leq 376\cosh^2(2r_0),\label{gamma-upper-3}
\end{align}
and
\begin{align}
|\Gamma_4| & \leq 32\left( 8\epsilon^3\sinh(r_0) + 4\epsilon^2\cosh(r_0)\right)\sinh(2r_0)\notag\\
&\leq 384\cosh^2(2r_0),\label{gamma-upper-4}
\end{align}
and lastly 
\be\label{gamma-upper-5}
|\Gamma_5| \leq 2\tanh(2r_0) + 4 \leq 6.
 \ene
Furthermore, we inspect more closely some terms in \eqref{eq:theta-evolution-modifed-mcf} as follows: Since we need to estimate $\Gamma_1(1-\Theta^4)$, we notice that 
\begin{align*}
&\Gamma_1 = -4 (1+\lambda^2(0) + (1-\lambda^2(0))\cosh(2r_0))\left( (1-\lambda^2(0))^2\sinh^{2}(2r_0) \right)\\
& +(16b(1-\lambda^2(0))\sinh (2r_0)-16 b^2)(1+\lambda^2(0) + (1-\lambda^2(0))\cosh(2r_0)) \,,
\end{align*}
then we have 
\begin{eqnarray*}
&16(b(1-\lambda^2(0))\sinh(2r_0)-b^2)(1+\lambda^2(0)+(1-\lambda^2(0))\cosh(2r_0))(1-\Theta^4)\\
&\geq  -32\epsilon(\sinh(2r_0) + \epsilon)(\cosh(2r_0)+2)(1-\Theta^2)\\
&\geq  -96 \epsilon \left(\frac{1}{2} \sinh(4r_0) + \cosh(2r_0)\right)(1-\Theta^2)\\
&\geq -144 \epsilon \cosh(4r_0)\Theta^2(1-\Theta^2)\frac{1}{\Theta^2}\,.
\end{eqnarray*}
Moreover, we have 
\begin{align*}
&\Gamma_2 \geq 4(1-\epsilon^2)^2\cosh(4r_0)+ 16(1-\lambda(0)^2)^3\cosh(2r_0) -4(1+\epsilon^2)(14\epsilon^2\\
& \ \ \ +12\epsilon\sinh(2r_0))-24\epsilon \sinh(4r_0)\notag\\
\geq & 4(1-\epsilon^2)^2\cosh(4r_0) - 120\epsilon \sinh(4r_0)-72\epsilon^2 + 16(1-\lambda(0)^2)^3\cosh^2 r_0 \notag\\
\geq & \left(4(1-\epsilon^2)^2 - 192\epsilon\right)\cosh(4r_0) + 16(1-\lambda(0)^2)^3\cosh^2 r_0\,,\notag
\end{align*}
and 
\begin{align*}
&\Gamma_3\Theta(1-\Theta^2)\sqrt{2(1-\Theta^2)} \geq  \left(-8\epsilon(3+4\epsilon^2)\cosh r_0 - 8\epsilon(1+4\epsilon^2)\cosh(3r_0)\right.\\
&\left.-32\epsilon^2(1+4\epsilon^2)(1+\cosh(2r_0))\sinh r_0\right)\cdot \Theta(1-\Theta^2)\sqrt{2(1-\Theta^2)}\\
&\geq  -112\epsilon \cosh(3r_0)-160 \epsilon^2\sinh(3r_0)\Theta(1-\Theta^2)\sqrt{2(1-\Theta^2)} \\
&\geq -272\sqrt{2}\epsilon \cosh(3r_0) \Theta^2(1-\Theta^2)\frac{1}{\Theta\sqrt{1-\Theta^2}}\\
&\geq -272\sqrt{2}\epsilon \cosh(4r_0) \Theta^2(1-\Theta^2)\frac{1}{\Theta\sqrt{1-\Theta^2}}\,,
\end{align*}
and
\begin{align*}
    & \Gamma_4 \Theta^3\sqrt{2(1-\Theta^2)}
  \geq  -384 \sqrt{2}\epsilon^2 \cosh r_0\sinh(2r_0)\Theta^2(1-\Theta^2)\frac{1}{\sqrt{1-\Theta^2}}\\
  \geq & -384 \sqrt{2}\epsilon^2 \sinh(3r_0)\Theta^2(1-\Theta^2)\frac{1}{\sqrt{1-\Theta^2}}\\
  \geq & -384 \sqrt{2}\epsilon^2 \cosh(4r_0)\Theta^2(1-\Theta^2)\frac{1}{\sqrt{1-\Theta^2}}\,,
  \end{align*}
and
  \begin{align*}
  \frac{2bc\Theta (1-\Theta^2)}{1+ (1-\lambda^2(0))\sinh^2 r_0 } =&\frac{ 8bc(1+\lambda^2(0)+(1-\lambda^2(0)) \cosh(2r_0))^2\Theta (1-\Theta^2)}{2(1+\lambda^2(0)+(1-\lambda^2(0)) \cosh(2r_0))^3}\\
   \geq & \frac{-16 \epsilon (2+\cosh(2r_0))^2 \Theta^2 (1-\Theta^2)\frac{1}{\Theta}}{2(1+\lambda^2(0)+(1-\lambda^2(0)) \cosh(2r_0))^3} \\
   \geq & \frac{-112\epsilon\Theta^2(1-\Theta^2)\frac{1}{\Theta}}{2(1+\lambda^2(0)+(1-\lambda^2(0))\cosh(2r_0))^3}.
\end{align*}
\subsection{Positive lower bound for the angle} The heart of matter is to show the evolving surfaces stay as graphs of the fixed {\ms}, this amounts to prevent $\Theta$ from going to zero along the flow. Now we prove the following key lemma establishing a positive lower bound for $\Theta$. 
\bl\label{keylemma}
Let $M^3$ be an {\afm} and $\Sigma =\Sigma(0)$ be the unique closed {\ms} with {\pc}s $\pm \lambda(x,0) \in (-1,1)$ in $M^3$. There exists a universal constant $\epsilon$ as in \eqref{unifconst}: 
$$
    \epsilon \in \left[ 0, 1-\frac{1}{\left(1+\frac{\epsilon_1}{16}\right)^2}\right) \cap \left[0, \frac{\sqrt{\epsilon_1}}{12}\right]\cap \left[0, 7\times 10^{-6} \right],
$$
where $\epsilon_1>0$ is from \eqref{eps_1}, such that if the {\sff} of $\Sigma$ satisfies 
$$
\|A_{\Sigma}\|_{C^1(\Sigma)} \leq \epsilon \,,
$$
then the angle function $\Theta(\cdot, t)$ for the MMCF \eqref{mmcf} with $c\in \left[0, 2-\frac{\epsilon_1}{2}\right]$ starting from any equidistant surface $S_0 = \Sigma(r) \subset M^3$ with \eqref{r-range-1} has a positive lower bound, namely,
\be\label{theta-bd}
\Theta \geq \frac{1}{1+\frac{\epsilon_1}{8}}.
\ene
\el
\begin{proof}
Let $0<\epsilon_1 \ll 1$ be given by \eqref{eps_1} so that the two ends of $M^3$ are monotonically foliated by closed CMC surfaces of mean curvature ranging within $(-2, -2+\epsilon_1) \cup (2-\epsilon_1,2)$. For any
$$ c \in \left[0, 2-\frac{\epsilon_1}{2}\right],$$
by Remark \ref{r-slice-1}, assuming that 
\begin{equation}\label{lambda-range}
    \lambda_{max} \in \left[0, \frac{\sqrt{\epsilon_1}}{12}\right] \subset \left[0, \frac{\sqrt{2-c}}{2}\right] \cap  \left[0, \frac{1}{2}\right]
\end{equation}
so that if $S_0 = \Sigma(r)$ is the $r$-equidistant surface with 
$$r \geq \tanh^{-1} \left(\frac{c}{2 - (2-\frac{c^2}{2})\lambda^2_{max}}\right) = \widetilde r_c,$$ then the mean curvature of $S_0$ is at least $c \in [0,2)$, with $\min_{S_0}\Theta^2 =1$ and $a_0=b_0 \geq \widetilde r_c > r_c = \tanh^{-1}\left(\frac{c}{2}\right)$. Note also that \eqref{lambda-range} ensures that both Theorem \ref{height-1} (with $\widetilde b_0 = r_c$) and Lemma \ref{mean-convex-1} apply in our setting.

Let $\epsilon>0$ be chosen so that it satisfies \eqref{r-range-1}, which implies that 
$$
\frac{1}{1+\frac{\epsilon_1}{16}} < \sqrt{1-\epsilon},
$$
and we use the condition \eqref{r-range-1} to find:
\begin{equation}
    \epsilon \leq \frac{1- \left(\frac{1}{1+\frac{\epsilon_1}{16}}\right)^2}{e^{8  \cosh (2a_0)}}.
\end{equation} 
Now letting $$\phi(t) = \min_{S(t)} \Theta^2 \in (0,1]$$ on $[0,T]$ for some $T>0$ as long as the flow exists and stays graphic over $\Sigma$. Since $\phi(0) = \min_{S_0}\Theta^2 =1$, the MMCF exists for some time $T>0$ and $\phi(t)$ possibly decays along the flow. 
Let $T_1 \in (0, \infty]$ be the first time $t>0$ such that $\phi(T_1) = 1-\epsilon$. If this never happens then we set $T_1 = \infty$ and we are done. So we may assume $T_1 < \infty$.

We first note that there exists 
\begin{equation}
\epsilon_0 = 7\times 10^{-6}
\end{equation}
such that if $\epsilon\leq \epsilon_0$ and
\begin{equation}\label{theta-range-10}
\Theta \in \left[\frac{1}{2}, \sqrt{1 - \epsilon}\right]\,,
\end{equation}
then
\begin{align}
    2(1-\epsilon^2)^2 - \left(192+ \frac{144}{\Theta^2}+\frac{272\sqrt{2}}{\Theta\sqrt{1-\Theta^2}}+ \frac{384\sqrt{2}\epsilon}{\sqrt{1-\Theta^2}}+\frac{112}{\Theta} \right)\epsilon \geq 0.
\end{align}
To see this, we use that if $\epsilon$ and $\Theta$ are in those ranges, then
$$
 \frac{1}{\Theta^2} \leq 4, \quad \frac{1}{\Theta\sqrt{1-\Theta^2}}\leq \frac{2}{\sqrt{\epsilon}}, \quad \frac{\epsilon}{\sqrt{1-\Theta^2}}\leq \sqrt{\epsilon} \quad \text{and} \quad \frac{1}{\Theta} \leq 2.
$$
Therefore if
\begin{equation}\label{theta-range-4}
\|A_{\Sigma}\|_{C^1} \leq \epsilon \leq 7\times 10^{-6} \quad \text{and}\quad \Theta \in \left[\frac{1}{2}, \sqrt{1 - \epsilon}\right]\,,
\end{equation}
then half of the first (positive) term of $\Gamma_2 \Theta^2(1-\Theta^2)$ in \eqref{eq:theta-evolution-modifed-mcf} will dominate the other potentially negative terms, namely,
\begin{align*}
&\Gamma_2 \Theta^2(1-\Theta^2) +(16b(1-\lambda^2(0))\sinh (2r)-16 b^2)(1+\lambda^2(0) + (1-\lambda^2(0))\cosh(2r)) \\
&\cdot (1-\Theta^4) +\Gamma_3\Theta(1-\Theta^2)\sqrt{2(1-\Theta^2)} +\Gamma_4 \Theta^3\sqrt{2(1-\Theta^2)}\\
&+ 8bc(1+\lambda^2(0) + (1-\lambda^2(0))\cosh(2r))^2\Theta (1-\Theta^2) \\
\geq & 2 (1-\lambda^2(0))^2 \cosh(
     4 r)\Theta^2(1-\Theta^2) \geq 16 (1-\lambda^2(0))^2 \cosh^2(
     r)\sinh^2(
     r)\Theta^2(1-\Theta^2)\,.
\end{align*}
In this case, using the height estimate in Theorem \ref{height-1}:
\begin{equation}
r_c \leq u(\cdot,t) \leq   \sinh^{-1}\left(e^{-\frac{2-c}{4} t}\sinh( a_0 - \widetilde r_c )\right) +  \widetilde r_c
\end{equation}
and the fact that
\begin{align*}
2(1-\lambda^2(0))\cosh^2(r_0)\leq 1+\lambda^2(0) + (1-\lambda^2(0))\cosh(2r_0)\leq 2(1+\lambda^2(0))\cosh^2(r_0)\,,
\end{align*}
at the minimum point $(p,r_0)$ we have
\begin{align}
    \left(\frac{\partial }{\partial t}  - \Delta\right)\Theta^2 
    \geq  &\frac{-8(1-\lambda^2(p,0))^2\cosh^2(r_0)\sinh^{2}(r_0)}{(1+\lambda^2(p,0) + (1-\lambda^2(p,0))\cosh(2r_0))^2} (1+\Theta^2) (1-\Theta^2) \notag\\
    &+ \frac{ 8(1-\lambda^2(p,0))^2\cosh^2(r_0)\sinh^2(r_0)}{(1+\lambda^2(p,0) + (1-\lambda^2(p,0))\cosh(2r_0))^3}\Theta^2(1-\Theta^2)\notag\\
    & + \frac{4c  \sinh(r_0)\cosh(r_0)(1-\lambda^2(p,0)) }{1+\lambda^2(p,0) + (1-\lambda^2(p,0))\cosh(2r_0)}\Theta(1-\Theta^2)\notag\\
    \geq  & - \frac{4\cosh(2\widetilde r_c)\sinh^2(a_0- \widetilde r_c)}{\cosh^2(r_c)}  e^{-\frac{2-c}{2}t} (1-\Theta^2) \notag\\
     &-\frac{2\sinh(2(a_0-\widetilde r_c))\sinh(2 \widetilde r_c)}{\cosh^2(r_c)} e^{-\frac{2-c}{4}t} (1-\Theta^2)\notag\\
    & -\frac{8(1-\lambda^2(p,0))^2\cosh^2(r_0)\sinh^{2}(\widetilde r_c)}{(1+\lambda^2(p,0) + (1-\lambda^2(p,0))\cosh(2r_0))^2} (1+\Theta^2) (1-\Theta^2) \label{term-1}\\
    &+ \frac{ 8(1-\lambda^2(p,0))^2\cosh^2(r_0)\sinh^2(r_0)}{(1+\lambda^2(p,0) + (1-\lambda^2(p,0))\cosh(2r_0))^3}\Theta^2(1-\Theta^2) \label{term-2}\\
    & + \frac{4c  \sinh(r_0)\cosh(r_0)(1-\lambda^2(p,0)) }{1+\lambda^2(p,0) + (1-\lambda^2(p,0))\cosh(2r_0)}\Theta(1-\Theta^2). \label{term-3}
\end{align}
Our next goal is to choose appropriate range of $\min_{S(t)}\Theta = \Theta(p,r_0)$ so that the sum of the last three terms \eqref{term-1}, \eqref{term-2} and \eqref{term-3} is non-negative, so that the following differential inequality holds: 
\begin{align}\label{diff-ineq-for-theta-1}
    \frac{d }{d t} \phi(t)
\geq  &-\frac{4\cosh(2\widetilde r_c)\sinh^2(a_0- \widetilde r_c) + 2\sinh(2(a_0-\widetilde r_c))\sinh(2 \widetilde r_c)}{\cosh^2(r_c)} \notag\\
    &\cdot e^{-\frac{2-c}{4}t} (1-\phi(t)) .
\end{align}
To do so, at $(p,r_0)$ we need to have (using that $4-c^2 = (2+c)(2-c)\geq 2\cdot \frac{\epsilon_1}{2} = \epsilon_1$ and $\lambda_{max}^2\leq \frac{1}{4}$)
\begin{align*}
   &\left( 1- \frac{ \sinh^2(r_0)}{(1+\lambda^2(p,0) + (1-\lambda^2(p,0))\cosh(2r_0))\sinh^2(\widetilde r_c)} \right) \Theta^2 \\
   & - \frac{c\tanh(r_0)(1+\lambda^2(p,0) + (1-\lambda^2(p,0))\cosh(2r_0))}{2(1-\lambda^2(p,0))\sinh^2(\widetilde r_c)}\Theta + 1\\
   \leq & \left( 1- \frac{ \sinh^2(r_c)}{2(1+\lambda^2_{max})\cosh^2(r_c)\sinh^2(\widetilde r_c)} \right) \Theta^2  - \frac{c\sinh(r_c)\cosh(r_c)}{\sinh^2(\widetilde r_c)}\Theta + 1\\
   =& \left(1 - \frac{(4-c^2)(4-8\lambda_{max}^2+(4-c^2)\lambda_{max}^4}{32(1+\lambda_{max}^2)}\right)\Theta^2 - \left(2 - 4\lambda_{max}^2 + \frac{4-c^2}{2}\lambda_{max}^4 \right)\Theta + 1 \\
   \leq & \left(1-\frac{4\epsilon_1(1-2\lambda_{max}^2)}{64}\right)\Theta^2-2\left(1-2\lambda_{max}^2\right)\Theta+1 \leq 0.
   \end{align*}
Note that by our choice \eqref{lambda-range} we have
\begin{equation}\label{lambda-range-3}
    \lambda_{max}^2\leq \frac{\epsilon_1}{12^2}\leq \frac{\epsilon_1}{4(32+\epsilon_1)}
\end{equation}
and so that
$$
\epsilon_1-2(32+\epsilon_1)\lambda_{max}^2\geq \frac{\epsilon_1}{2},
$$
then the solution to the last inequality before \eqref{lambda-range-3} is
\begin{align}\label{theta-range-3}
   &\frac{4}{4-8\lambda_{max}^2+\sqrt{\epsilon_1-2(32+\epsilon_1)\lambda_{max}^2+64\lambda_{max}^4}}\\
   \leq &\Theta \leq  \frac{4}{4-8\lambda_{max}^2-\sqrt{\epsilon_1-2(32+\epsilon_1)\lambda_{max}^2+64\lambda_{max}^4}}.\notag
\end{align}
Therefore our choice 
\begin{equation}
   \frac{1}{1+\frac{\epsilon_1}{8}} \leq \Theta \leq 1
\end{equation} 
is within the range of the solution \eqref{theta-range-3}. Combining this with \eqref{theta-range-10} and \eqref{lambda-range}, we can choose any
\begin{equation}\label{eps-fixed}
    \epsilon \in \left[ 0, 1- \frac{1}{\left(1+\frac{\epsilon_1}{16}\right)^2}\right) \cap \left[0, \frac{\sqrt{\epsilon_1}}{12}\right]\cap \left[0, 7\times 10^{-6} \right]
\end{equation}
so that
$$
\frac12< \frac{1}{1+\frac{\epsilon_1}{8}}<\frac{1}{1+\frac{\epsilon_1}{16}} < \sqrt{1-\epsilon}.
$$
Then as long as 
\begin{equation}\label{theta-range-5}
\|A_{\Sigma}\|_{C^1} \leq \epsilon \quad \text{and}\quad \min_{S(t)}\Theta \in \left[\frac{1}{1+\frac{\epsilon_1}{8}},\sqrt{1 - \epsilon}\right]
\end{equation}
we have the differential inequality \eqref{diff-ineq-for-theta-1} for $\phi(t)$.

Suppose the lower bound \eqref{theta-bd} does not hold, then we let $T_2> T_1$ be the first time that $\phi(T_2) =\frac{1}{(1+\frac{\epsilon_1}{8})^2}$. Then on $[T_1, T_2]$, \eqref{theta-range-5} holds, and we can apply the differential inequality \eqref{diff-ineq-for-theta-1},  
that is,
 \begin{align*}
&\frac{d }{d t} \phi(t)
\geq  -C_1 e^{-C_2t}(1-\phi(t)),
\end{align*}
where
 \begin{align*}
C_1 & = \frac{4\cosh(2\widetilde r_c)\sinh^2(a_0- \widetilde r_c) + 2\sinh(2(a_0-\widetilde r_c))\sinh(2 \widetilde r_c)}{\cosh^2(r_c)}, \\
C_2 & = \frac{2-c}{4}\geq \frac{\epsilon_1}{8}.
\end{align*}
Then
 \begin{align*}
  \frac{\frac{d }{d t} \phi(t)}{1-\phi(t)}\geq -C_1 e^{-C_2t}
\end{align*}
and
 \begin{align*}
 \frac{d }{d t} \ln(1-\phi(t))\leq C_1 e^{-C_2t}.
\end{align*}
Integrating over $[T_1,t]$ we get
 \begin{align*}
 \ln(1-\phi(t)) - \ln(1-\phi(T_1)) &\leq C_1 \int_{T_1}^t e^{-C_2t} dt\\
 & = \frac{C_1}{C_2}(e^{-C_2T_1}-e^{-C_2t} )\leq \frac{C_1}{C_2}.
\end{align*}
Thus,
 \begin{align*}
 \ln(1-\phi(t)) & \leq  \ln(1-\phi(T_1)) + \frac{C_1}{C_2} \\
 & \leq  \ln(1-\phi(T_1)) + 
 \frac{16\cosh(2\widetilde r_c)\sinh^2(a_0- \widetilde r_c) + 2\sinh(2(a_0-\widetilde r_c))\sinh(2 \widetilde r_c)}{(2-c)\cosh^2(r_c)}\\
 & = \ln(1-\phi(T_1)) + \frac{2+c}{4}\left( 5\cosh (2a_0) + 3\cosh (2a_0 - 4\widetilde r_c) - 8 \cosh (2 \widetilde r_c)\right)\\
 &\leq\ln(1-\phi(T_1)) + 8  \cosh (2a_0).
\end{align*}

That is
 \begin{align}\label{phi-t-comp-1}
 \phi(t) \geq 1- (1-\phi(T_1))e^{8 \cosh (2a_0) }.
\end{align}
Since we have assumed $\phi(T_1) = 1-\epsilon$, from \eqref{phi-t-comp-1}, we have 
 \begin{align}\label{stay-lower-bound-1}
\phi(t) \geq\frac{1}{(1+\frac{\epsilon_1}{16})^2} > \frac{1}{(1+\frac{\epsilon_1}{8})^2} \quad \text{for all } t\in [T_1, T_2].
\end{align}
This contradicts the designation that $T_2$ is the first time that $\phi(T_2) = \frac{1}{(1+\frac{\epsilon_1}{8})^2}$. Therefore \eqref{theta-range-5} stays valid on $[T_1, T]$ for any $T>T_1$ unless possibly $\min_{S(t)}\Theta$ becomes larger than $\sqrt{1-\epsilon}$ (as long as the flow exists). Either case we conclude the positive lower bound for $\Theta$ as in \eqref{theta-bd}.
\end{proof}

\subsection{Proofs of Theorems \ref{main} and  \ref{main-thm-2}}
We now proceed to prove our main result on the MMCF in an {\afm}, namely Theorem \ref{main}. 
 \begin{proof}(of Theorem \ref{main}). The Lemma 
 \ref{keylemma} shows the MMCF exists smoothly with \eqref{theta-bd} for all time so that all higher order estimates are uniform in space and time. Now using the height estimates in Theorem \ref{height-1}, the flow remains in a compact region and thus converges smoothly to a stationary limiting surface. On the other hand, by Lemma \ref{mean-convex-1} we know that $u(\cdot, t)$ is monotonically decreasing since
   $\frac{\partial u(\cdot,t)}{\partial t}= -(H-c) \Theta \leq 0$, and thus $\lim\limits_{t\to \infty} \frac{\partial u(\cdot,t)}{\partial t} =0$.
We conclude that the limiting surface is of CMC 
with $c\in\left[0, 2-\frac{\epsilon_1}{2}\right]$.
\end{proof}

We can now prove the main application (Theorem \ref{main-thm-2}) of our analysis for the MMCF in a subclass of {\afm}s as in Theorem \ref{main}: the existence and uniqueness of the global monotone CMC foliation in such an {\afm}, confirming the Conjecture \ref{Thurston-Conj} in this case.
\begin{proof} (of Theorem \ref{main-thm-2}) For any fixed $c\in [0, 2-\frac{\epsilon_1}{2}]$, by Theorem \ref{main} and the height estimate Lemma \ref{height-1}, we have CMC surface $\Sigma_c = S_{\infty}^c$ lies between the two equidistant surfaces $\Sigma(r_c)$ and $\Sigma(\widetilde r_c)$.
Moreover, as a geodesic graph over $\Sigma$, by \eqref{theta-bd} we have
$$
\min_{\Sigma_c}\Theta\geq\frac{1}{1+\frac{\epsilon_1}{8}}>0.
$$
Since as $\epsilon_1 \to 0$ (and thus $\epsilon \to 0$ and $M^3$ tends to a Fuchsian manifold), we have $\widetilde r_c \to r_c = \tanh^{-1}\left(\frac{c}{2}\right)$ and the CMC surface $\Sigma_c$ is just the umbilical equidistant surface $\Sigma\left(r_c\right)$ with {\pc} $\frac{c}{2}<1$ (see Remark \ref{principle-curv-equidistant}) in the Fuchsian manifold $\Sigma\times \R$, we may choose a possibly smaller $\epsilon_1>0$ so that the two principal curvatures of $\Sigma_c$ at any point satisfies
$\max_{\Sigma_c}|\lambda| < 1$. This small {\pc} property implies that the equidistant surfaces $\Sigma_c(r)$ of $\Sigma_c$ foliate $M^3$ (see for instance \cite{Eps84}).

Recall that using Uhlenbeck's construction (Lemma \ref{U-metric}), the metric on $M^3$ (which is foliated by the equidistant surfaces
$\{\Sigma(r)\}_{r\in \R}$ of $\Sigma$)
$$
\begin{bmatrix}
1 & 0&0\\
0 & g_{11}(x,r) &  g_{12}(x,r)\\
0 &  g_{21}(x,r) &  g_{22}(x,r)
\end{bmatrix}
$$ 
satisfies:
\begin{equation}\label{metricODE}
    \frac{1}{2} \frac{\partial^2}{\partial r^2} g_{ij}(x,r) - \frac{1}{4} \frac{\partial }{\partial r} g_{il}(x,r)g^{lk}(x,r)\frac{\partial }{\partial r} g_{kj}(x,r) = g_{ij}(x,r)\,
\end{equation}
on $\Sigma \times \mathbb{R}$, subject to the initial data
$$
g(\cdot,0) = I_2 \quad \text{and}\quad \frac{1}{2} \frac{\partial }{\partial r} g (0) = A(\Sigma).
$$
Then the Uhlenbeck's metric for $M^3$ is uniquely determined as:
$$
g(\cdot, r) = g(r) = \left(\cosh(r) I_2 +\sinh(r) A(\Sigma_c)\right)^2,
$$
which is \textit{non-singular} for all $|r| < \tanh^{-1}(1)=\infty$. Since  the principal curvatures of $\Sigma_c$ at $(x,0)$ are $\lambda_1$ and $\lambda_2$ with $\lambda_1\geq \lambda_2, \max{|\lambda_i|}<1$ and $\lambda_1+\lambda_2 =c$, the principal curvatures of the equidistant surface $\Sigma_c(r)$ at $(x,r)$ are (c.f. \cite[Equation (3.10)]{Eps84}):
\begin{equation}
\frac{\lambda_1 \cosh(r) + \sinh(r)}{ \cosh(r) + \lambda_1 \sinh(r)} = \tanh\left(\tanh^{-1}(\lambda_1) + r\right)
\end{equation}
and
\begin{equation}
\frac{\lambda_2 \cosh(r) + \sinh(r)}{ \cosh(r) + \lambda_2 \sinh(r)}= \tanh\left(\tanh^{-1}(\lambda_2) + r\right),
\end{equation}
and thus the {\mc} of $\Sigma_c(r)$ at $(x,r)$ is
\begin{equation}
   H (x,r)= \frac{c\cosh(2r) + (1+\lambda_1\lambda_2)\sinh(2r)}{(\cosh(r)+\lambda_1\sinh(r))(\cosh(r)+\lambda_2\sinh(r))}.
\end{equation}
One easily checks that 
\begin{align*}
    \frac{\partial H(x,r)}{\partial r}
    =& \frac{1-\lambda_1^2}{(\cosh(r)+\lambda_1\sinh(r))^2}+ \frac{1-\lambda_2^2}{(\cosh(r)+\lambda_2\sinh(r))^2}>0,
\end{align*}
that is, the mean curvature of the equidistant surfaces $\Sigma_c(r)$ from $\Sigma_c$ is strictly increasing with respect to $r\in \R$.

Now with a direct application of the geometric {\maxp}  Theorem \ref{g-m-p} and arguing as in the proofs of \cite[Proposition 4.1, Lemma 4.3]{CMS23}, we will show the monotonicity of the CMC surfaces $\Sigma_c$'s with monotone values $c$'s and thus the uniqueness of $\Sigma_c$'s. For any $c \in [0, 2-\frac{\epsilon_1}{2}]$, since $\Sigma_c$ is closed we let
$$
H_{r,min} = \min_{x\in \Sigma_c}  \frac{\partial H(x,r)}{\partial r}>0 \quad \text{and}\quad H_{r,max} = \max_{x\in \Sigma_c}  \frac{\partial H(x,r)}{\partial r}>0
$$
which are clearly continuous with respect to $r$. Now denote
$$
g_{min}(r) = \int_{0}^r H_{s,min} ds  \quad \text{and}\quad g_{max}(r) = \int_{0}^r H_{s,max} ds  .
$$
Consider the hyperbolic signed distance of points on $\Sigma_{c'}$ to $\Sigma_c$ (where, without loss of generality, $c'>c$), 
$$
\max_{x\in \Sigma_{c'}} \text{dist} (x, \Sigma_c) = r_{max}=  \text{dist }  (p, \Sigma_c) \quad {\text{where }} p \in \Sigma_c(r_{max})
$$
and 
$$
\min_{x\in \Sigma_{c'}} \text{dist} (x, \Sigma_c) = r_{min}=  \text{dist }  (q, \Sigma_c) \quad {\text{where }} q \in \Sigma_c(r_{min})\,,
$$
then by the geometric maximum principle Theorem \ref{g-m-p} we know that
$$
 g_{min}(r_{max}) + c\leq H(p)\leq c'\leq H(q) \leq g_{max}(r_{min})+c
$$
and therefore
\begin{equation}\label{mono-1}
    g^{-1}_{min}(c'-c) \leq \text{dist }  (\Sigma_{c'}, \Sigma_c) \leq g^{-1}_{max}(c'-c).
\end{equation}
This yields the monotonicity of the CMC surfaces $\Sigma_c$'s with monotone values of $c$'s and thus also the uniqueness of $\Sigma_c$'s in $M^3$. Since \eqref{theta-bd} and therefore all higher order estimates hold uniformly for all $\Sigma_c$'s, \eqref{mono-1} also shows that $\Sigma_{c'}\to \Sigma_c$ smoothly as $c'\to c$. Since $c \in [0, 2-\frac{\epsilon_1}{2}]$ (similarly for $c\in [-2+\frac{\epsilon_1}{2},0]$ by symmetry) is arbitrary, combining this with Theorem \ref{main} and the foliation for the two ends in \cite{MP11} and \cite{CMS23}, it yields that $M^3$ admits a unique global monotone foliation by closed incompressible surfaces of constant mean curvature ranging from $-2$ to $2$.
\end{proof}
\bibliographystyle{amsalpha}
\bibliography{ref-mmcf}
\end{document}